\newtheorem{lemma}{Lemma}[section]
\newtheorem{theorem}[lemma]{Theorem}
\newtheorem{corollary}[lemma]{Corollary}
\newtheorem{proposition}[lemma]{Proposition}
\newtheorem{conjecture}[lemma]{Conjecture}
\theoremstyle{plain}
\newtheorem{defi}[lemma]{Definition}
\theoremstyle{definition}
\newtheorem{example}[lemma]{Example}
\newtheorem{remark}[lemma]{Remark}
\newtheorem{proofpart}{Part}
\DeclareMathOperator{\cat}{Cat}
\DeclareMathOperator{\id}{Id}
\DeclareMathOperator{\supp}{Supp}
\DeclareMathOperator{\SYT}{SYT}
\DeclareMathOperator{\sh}{sh}
\newcommand{\R}{\mathbb{R}}
\newcommand{\C}{\mathbb{C}}
\newcommand{\wt}{wt}
\newcommand{\ten}{$10$}
\newcommand{\eleven}{$11$}
\newcommand{\twelve}{$12$}
\newcommand{\thirteen}{$13$}
\newcommand{\fourteen}{$14$}
\newcommand{\fifteen}{$15$}
\newcommand{\sixteen}{$16$}
\newcommand{\seventeen}{$17$}
\newcommand{\eighteen}{$18$}
\newcommand{\nineteen}{$19$}
\newcommand{\twenty}{$20$}
\newcommand{\smallo}{o}
\newcommand\numberthis{\addtocounter{equation}{1}\tag{\theequation}}
\title{Partial sum of matrix entries of representations of the symmetric group and its asymptotics}
\author{Dario De Stavola \thanks{email: dario.destavola@math.uzh.ch\newline Institute of Mathematics, University of Zurich, Winterthurerstrasse 190, 8057, Zurich, Switzerland.\newline Keywords: Jucys-Murphy elements, symmetric functions, symmetric group.}}
\begin{document}
\maketitle

\begin{abstract}
Many aspects of the asymptotics of Plancherel distributed partitions have been studied in the past fifty years, in particular the limit shape, the distribution of the longest rows, connections with random matrix theory and characters of the representation matrices of the symmetric group. Regarding the latter, we expand a celebrated result of Kerov on the asymptotic of Plancherel distributed characters by studying partial trace and partial sum of a representation matrix. We decompose these objects into a main term and a reminder, proving a central limit theorem for both main terms and a law of large numbers for the partial sum itself. Our main tool is the expansion of symmetric functions evaluated on Jucys-Murphy elements. 
\end{abstract}

\section{Introduction}\label{sec:in}
Let $\lambda$ be a partition of $n$, in short $\lambda\vdash n$, represented as a Young diagram in English notation. A filling of the boxes of $\lambda$ with numbers from $1$ to $n$, increasing towards the right and downwards, is called a \emph{standard Young tableau}. We call $\dim\lambda$ the number of standard Young tableaux of shape $\lambda$. We fix $n$ and we associate to each $\lambda$ the probability $\frac{(\dim\lambda)^2}{n!}$, which defines the \emph{Plancherel measure}.
\smallskip

Let us recall briefly three results for the study of the asymptotics of Plancherel distributed random partitions. They relate algebraic combinatorics, representation theory of the symmetric group, combinatorics of permutations, and random matrix theory. 
\begin{enumerate}
 \item The partitions of $n$ index the irreducible representations of the symmetric group $S_n$. For each $\lambda\vdash n$ the dimension of the corresponding irreducible representation is $\dim\lambda$. A natural question concerns the asymptotics of the associated characters when $\lambda$ is distributed with the Plancherel measure. A central limit theorem was proved with different techniques by Kerov, \cite{kerov1993gaussian}, \cite{ivanov2002kerov}, and Hora, \cite{hora1998central}:
 \begin{equation}\label{intro2}
  n^{\frac{|\rho|-m_1(\rho)}{2}}\hat{\chi}^{\lambda}_{\rho}\overset{d}\to \prod_{k\geq 2} k^{m_k(\rho)/2} \mathcal{H}_{m_k(\rho)}(\xi_k).
 \end{equation}
 Here $\rho$ is a partition of $n$, $\hat{\chi}^{\lambda}_{\rho}$ is the renormalized character associated to $\lambda$ calculated on a permutation of cycle type $\rho$, $m_k(\rho)$ is the number of parts of $\rho$ which are equal to $k$, $\mathcal{H}_m(x)$ is the $m$-th Hermite polynomial, and $\{\xi_k\}_{k\geq 2}$ are i.i.d. standard gaussian variables.
 
\item The Robinson-Schensted-Knuth algorithm allows us to interpret the longest increasing subsequence of a uniform random permutation as the first row of a Plancherel distributed partition. This motivates the study of the shape of a random partition. A limit shape result was proved independently by \cite{kerov1977asymptotics} and \cite{logan1977variational}, then extended to a central limit theorem by Kerov, \cite{ivanov2002kerov}. For an extensive introduction on the topic, see \cite{romik2015surprising}.

\item More recently it was proved that, after rescaling, the limiting distribution of the longest $k$ rows of a Plancherel distributed partition $\lambda$ coincides with the limit distribution of the properly rescaled $k$ largest eigenvalues of a random Hermitian matrix taken from the Gaussian Unitary Ensemble. See for example \cite{borodin2000asymptotics} and references therein. Such similarities also occur for fluctuations of linear statistics, see \cite{ivanov2002kerov}.
\end{enumerate}
In the aftermath of Kerov's result \eqref{intro2}, a natural step in the study of the characters of the symmetric group is to look at the representation matrix rather than just the trace. We consider thus, for a real valued matrix $A$ of dimension $N$ and $u\in [0,1]$, the partial trace and partial sum defined, respectively, as

\[PT_u(A):=\sum_{i\leq uN}\frac{A_{i,i}}{N},\qquad PS_u(A):= \sum_{i,j\leq uN}\frac{A_{i,j}}{N}.\]
We study these values when $A=\rho^{\lambda}(\sigma)$ is a representation matrix of $S_n$, where $\lambda$ is a partition of $n$ and $\sigma$ is a permutation of $S_r$, $r\leq n$. We are interested in the asymptotic of $PT_u(\rho^{\lambda}(\sigma))$ and $PS_u(\rho^{\lambda}(\sigma))$ when $n$ grows. These partial sums are obviously not invariant by isomorphisms of representations, hence we consider an explicit natural construction of irreducible representations (the Young seminormal representation). 
We define \emph{subpartitions} $\mu_j$ of $\lambda$, denoted $\mu_j\nearrow\lambda$, as partitions of $n-1$ obtained from $\lambda$ by removing one box. The seminormal representation, recalled in Section \ref{section young seminormal}, allows a decomposition of the partial trace and partial sum of a representation matrix: we will show that there exist $\bar{j}$ and $\bar{u}$ such that
\begin{equation}\label{intro1}
 PT_u^{\lambda}(\sigma):=PT_u(\pi^{\lambda}(\sigma))=\sum_{j<\bar{j}}\frac{\dim\mu_j}{\dim\lambda}PT_1^{\mu_j}(\sigma)+\frac{\dim\mu_{\bar{j}}}{\dim\lambda}PT_{\bar{u}}^{\mu_{\bar{j}}}(\sigma),
\end{equation}
\begin{equation}\label{intro3}
 PS_u^{\lambda}(\sigma):=PS_u(\pi^{\lambda}(\sigma))=\sum_{j<\bar{j}}\frac{\dim\mu_j}{\dim\lambda}PS_1^{\mu_j}(\sigma)+\frac{\dim\mu_{\bar{j}}}{\dim\lambda}PS_{\bar{u}}^{\mu_{\bar{j}}}(\sigma).
\end{equation}
Here $PT_1^{\mu_j}(\sigma)=\hat{\chi}^{\mu_j}(\sigma)$, while 
\[PS^{\lambda}_1(\sigma)=\sum_{i\leq \dim\lambda}\frac{\pi^{\lambda}(\sigma)_{i,i}}{\dim\lambda}=:TS^{\lambda}(\sigma)\]
is the \emph{total sum} of the matrix $\rho^{\lambda}(\sigma)$.

In the first (resp. the second) decomposition we call the first term the \emph{main term for the partial trace} $MT_u^{\lambda}(\sigma)$ (resp. \emph{main term for the partial sum} $MS_u^{\lambda}(\sigma)$) and the second the \emph{remainder for the partial trace} $RT_u^{\lambda}(\sigma)$ (resp. the \emph{remainder for the partial sum} $RS_u^{\lambda}(\sigma)$). The decompositions show that the behavior of partial trace and partial sum depend on, respectively, total trace and total sum. 

As recalled in Equation \eqref{intro2}, a central limit theorem for total traces, or characters, is well known; we prove a central limit theorem for the total sum (Theorem \ref{convergence of total sum}): to each $\sigma \in S_r$ we associate the two values 
\[m_{\sigma}:= \mathbb{E}_{PL}^{r}\left[TS^{\nu}(\sigma)\right]\qquad \mbox{and}\qquad v_{\sigma}:=\binom{r}{2}\mathbb{E}_{PL}^{r}\left[\hat{\chi}^{\nu}_{(2,1,\ldots,1)} TS^{\nu}(\sigma)\right],\]
where $\mathbb{E}_{PL}^r[X]$ is the average of the random variable $X$ considered with the Plancherel measure $(\dim\nu)^2/r!$ for $\nu\vdash r$.
\begin{theorem}
Fix $\sigma\in S_{r}$ and let $\lambda\vdash n$. Then
 \[\sqrt{n}\cdot(TS^{\lambda}(\sigma)-m_{\sigma})\overset{d}\to \mathcal{N}(0,2 v_{\sigma}^2),\]
 where $\mathcal{N}(0,2 v_{\sigma}^2)$ is a normal random variable of variance $2 v_{\sigma}^2$.
\end{theorem}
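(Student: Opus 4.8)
The plan is to turn $TS^{\lambda}(\sigma)$ into a polynomial function on the Young diagram $\lambda$ and then to read off its Gaussian fluctuations from the known asymptotics \eqref{intro2}. First I would exploit the branching structure of the seminormal representation: since $\sigma\in S_r$ fixes the points $r+1,\dots,n$, the matrix $\pi^{\lambda}(\sigma)$ is block diagonal with respect to the decomposition of the representation space along the chain $S_r\subset\cdots\subset S_n$, and the all--ones vector splits accordingly. Setting $u=1$ in \eqref{intro3} and iterating the resulting one--step identity $TS^{\lambda}(\sigma)=\sum_{\mu\nearrow\lambda}\frac{\dim\mu}{\dim\lambda}TS^{\mu}(\sigma)$ down to level $r$ gives
\[TS^{\lambda}(\sigma)=\sum_{\nu\vdash r}\frac{\dim\nu\,f^{\lambda/\nu}}{\dim\lambda}\,TS^{\nu}(\sigma),\]
where $f^{\lambda/\nu}$ counts the standard skew tableaux of shape $\lambda/\nu$. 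The point of this step is that the only dependence on $\lambda$ now sits in the ratios $f^{\lambda/\nu}/\dim\lambda$.

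Next I would recognize these ratios as shifted Schur functions, which is exactly the manifestation of the fact that symmetric functions of the Jucys--Murphy elements are shifted--symmetric functions of the diagram. By the Okounkov--Olshanski evaluation one has $f^{\lambda/\nu}/\dim\lambda=s^{*}_{\nu}(\lambda)/n^{\downarrow r}$ with $n^{\downarrow r}=n(n-1)\cdots(n-r+1)$, so that
\[TS^{\lambda}(\sigma)=\frac{1}{n^{\downarrow r}}\sum_{\nu\vdash r}\dim\nu\,TS^{\nu}(\sigma)\,s^{*}_{\nu}(\lambda).\]
For fixed $\sigma$ this exhibits $TS^{\lambda}(\sigma)$ as an $n$--dependent scalar multiple of a single element of the algebra $\Lambda^{*}$ of polynomial functions on Young diagrams, which is precisely the setting in which \eqref{intro2} lives. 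I would then expand each $s^{*}_{\nu}$ in the basis of normalized characters $\Sigma_{\mu}$ (equivalently the $\hat\chi^{\lambda}_{\mu}$), substituting the deterministic value $|\lambda|=n$ for $\Sigma_{(1)}$. The monomials built only from $\Sigma_{(1)}$ are deterministic and, after division by $n^{\downarrow r}$, converge to the claimed mean; a short computation with the identity $\sum_{\lambda\vdash n}\dim\lambda\,f^{\lambda/\nu}$ identifies this limit with $m_{\sigma}=\mathbb{E}^{r}_{PL}[TS^{\nu}(\sigma)]$.

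The fluctuations are then governed by the lowest non--trivial character. Among all monomials $\Sigma_{k_1}\cdots\Sigma_{k_j}$ with every $k_i\ge 2$ appearing in the expansion, the single factor $\Sigma_{(2)}$ --- the transposition character, equal to twice the content sum $\sum_{\square\in\lambda}c(\square)$ --- produces the largest Plancherel fluctuation, every other monomial contributing at strictly smaller order after rescaling. Isolating its coefficient and using that the $\Sigma_{(2)}$--coefficient of $s^{*}_{\nu}$ is proportional to $\chi^{\nu}_{(2)}$, I would check that the total $\Sigma_{(2)}$--coefficient of the centered total sum equals, up to the rescaling, the constant $v_{\sigma}=\binom{r}{2}\mathbb{E}^{r}_{PL}[\hat\chi^{\nu}_{(2,1,\dots,1)}TS^{\nu}(\sigma)]$. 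Since \eqref{intro2} with $\rho=(2,1^{n-2})$ states that the rescaled transposition character converges to $\sqrt{2}\,\xi_{2}$, hence to $\mathcal{N}(0,2)$, the centered and rescaled total sum converges to $v_{\sigma}\cdot\mathcal{N}(0,2)=\mathcal{N}(0,2v_{\sigma}^{2})$, and the factor $2$ in the variance is inherited directly from Kerov's limit.

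The main obstacle is the control of the remainder, that is, showing that all contributions other than the single $\Sigma_{(2)}$ are negligible after rescaling. Here I would invoke the Ivanov--Olshanski estimates on the joint Plancherel moments (or cumulants) of the $\Sigma_{\mu}$: in the weight filtration of $\Lambda^{*}$ only the generator $\Sigma_{(2)}$ survives at the critical scale, while the products $\Sigma_{(2)}^{2},\Sigma_{(3)},\dots$ and the cross terms with the deterministic $\Sigma_{(1)}$--powers fall off faster. Making this uniform over the finitely many $\nu\vdash r$, and over the growing degree one encounters when passing to the $p$--th moment $\mathbb{E}^{n}_{PL}[(TS^{\lambda}(\sigma)-m_{\sigma})^{p}]$, is the technical heart of the argument; the cleanest rigorous route is the method of moments, expanding these powers through products of shifted Schur functions and matching the limits with the moments of $\mathcal{N}(0,2v_{\sigma}^{2})$.
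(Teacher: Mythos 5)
Your proposal is correct and follows essentially the same route as the paper: the iterated branching identity gives $TS^{\lambda}(\sigma)=\sum_{\nu\vdash r}TS^{\nu}(\sigma)\cdot\dim\nu\cdot\frac{\dim\lambda/\nu}{\dim\lambda}$ (Corollary \ref{corollary on total sum}), the ratio $\frac{\dim\lambda/\nu}{\dim\lambda}$ is converted into characters (the paper does this via the elementary orthogonality identity $\dim\lambda/\nu=\frac{1}{r!}\sum_{\tau\in S_r}\chi^{\nu}(\tau)\chi^{\lambda}(\tau)$ of Lemma \ref{lemma on skew dimension} rather than through shifted Schur functions, but the resulting expansion $TS^{\lambda}(\sigma)=\sum_{\tau\in S_r}\mathbb{E}_{PL}^{r}\left[\hat{\chi}^{\nu}(\tau)TS^{\nu}(\sigma)\right]\hat{\chi}^{\lambda}(\tau)$ of Proposition \ref{formula con traccia} is the same), and the limit is read off from Kerov's central limit theorem. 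One remark: the ``technical heart'' you anticipate in your last paragraph is not actually there --- the expansion is a \emph{finite} sum over $\tau\in S_r$ with coefficients independent of $n$, the $\tau=\id$ term is exactly $m_{\sigma}$ (no asymptotic computation needed), the transpositions contribute exactly $v_{\sigma}\hat{\chi}^{\lambda}((1,2))$, and each $\tau$ with $\wt(\tau)>2$ satisfies $\sqrt{n}\,\hat{\chi}^{\lambda}(\tau)\overset{p}\to 0$ directly by Theorem \ref{convergence of characters}, so Slutsky's theorem closes the argument with no method of moments and no uniformity issue.
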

The idea is to show that, for $\sigma\in S_r$, the total sum $TS^{\lambda}(\sigma)$ can be written as linear combination of $\{\hat{\chi}^{\lambda}(\tau)\}_{\tau\in S_r}$.
\bigskip

When investigating the partial trace, we focus on the main term, and we prove the following theorem:
\begin{theorem}\label{ora}
 let $\sigma$ be a permutation of cycle type $\rho$ and $u\in [0,1]$. Let $\{\xi_k\}_{k\geq 2}$ be a sequence of independent standard Gaussian variables. Then 
 \[n^{\frac{|\supp(\sigma)|}{2}}MT_{u}^{\lambda}(\sigma)\overset{d}\to u\cdot \prod_{k\geq 2} k^{m_k(\rho)/2} \mathcal{H}_{m_k(\rho)}(\xi_k),\]
 where $|\supp(\sigma)|$ is the size of the support of the permutation $\sigma$.
\end{theorem}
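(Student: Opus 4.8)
The plan is to reduce the statement to Kerov's central limit theorem \eqref{intro2} applied to $\hat\chi^\lambda(\sigma)$ itself, by showing that the main term differs from $u\cdot\hat\chi^\lambda(\sigma)$ by a quantity that is negligible at the scale $n^{-s/2}$, where I abbreviate $s:=|\supp(\sigma)|$. Recall from \eqref{intro1} that $MT_u^\lambda(\sigma)=\sum_{j<\bar j}\tfrac{\dim\mu_j}{\dim\lambda}\hat\chi^{\mu_j}(\sigma)$. The first observation is that the coefficients $\tfrac{\dim\mu_j}{\dim\lambda}$ are exactly the cotransition probabilities of the Plancherel growth process: by the branching rule $\sum_{\mu\nearrow\lambda}\dim\mu=\dim\lambda$ they sum to $1$, and a Bayes computation shows that, conditionally on a Plancherel $\lambda\vdash n$, the diagram $\mu$ obtained by deleting the box containing $n$ has law $\tfrac{\dim\mu}{\dim\lambda}$, with marginal again Plancherel on $n-1$. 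Thus $MT_u^\lambda(\sigma)$ is a truncated conditional average of the normalized characters $\hat\chi^{\mu}(\sigma)$ over the sub-diagrams $\mu\nearrow\lambda$.

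Two reductions then finish the argument. First, the truncation weight $W:=\sum_{j<\bar j}\tfrac{\dim\mu_j}{\dim\lambda}$ satisfies $|W-u|\le\tfrac{\dim\mu_{\bar j}}{\dim\lambda}\le\max_{\mu\nearrow\lambda}\tfrac{\dim\mu}{\dim\lambda}$, by the definition of $\bar j$ as the block in which the threshold $u\dim\lambda$ falls; since the rescaled content of the box removed at the last step converges to the (atomless) transition measure of the limit shape, each individual cotransition probability tends to $0$, so $W\to u$ in probability. Second, writing $\hat\chi^{\mu_j}(\sigma)=\hat\chi^\lambda(\sigma)+(\hat\chi^{\mu_j}(\sigma)-\hat\chi^\lambda(\sigma))$ and using that the weights sum to $W\le 1$, it suffices to control
\[
 n^{s/2}\,\mathbb E_{\mu\mid\lambda}\big[\big|\hat\chi^{\mu}(\sigma)-\hat\chi^{\lambda}(\sigma)\big|\big]\longrightarrow 0,
\]
after which $n^{s/2}MT_u^\lambda(\sigma)=W\cdot n^{s/2}\hat\chi^\lambda(\sigma)+\smallo_P(1)$.

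The crux is therefore the uniform estimate on the increment of the normalized character under removal of a single corner. Here I would use the expansion of symmetric functions evaluated on Jucys--Murphy elements: the normalized character $\hat\chi^\lambda(\sigma)$ is a fixed symmetric function, depending only on the cycle type $\rho$, evaluated on the multiset of contents $\{c(\square):\square\in\lambda\}$, which is the spectrum of the Jucys--Murphy elements on the irreducible $\lambda$. Passing from $\lambda$ to $\mu=\lambda\setminus\square_0$ deletes the single content $c(\square_0)=O(\sqrt n)$ from this alphabet and shifts $n\mapsto n-1$ in the normalizing falling factorials; expanding the symmetric function term by term shows that the resulting change is smaller than $\hat\chi^\lambda(\sigma)$ by a relative factor $O(n^{-1/2})$, uniformly over the removable corners. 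The transposition case is already indicative: since $\hat\chi^\lambda_{(2)}=\binom n2^{-1}\sum_{\square\in\lambda}c(\square)$, one gets $\hat\chi^\lambda_{(2)}-\hat\chi^\mu_{(2)}=O(n^{-3/2})$, which multiplied by $n^{s/2}=n$ is $O(n^{-1/2})$.

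Combining the two reductions with Slutsky's theorem and Kerov's central limit theorem \eqref{intro2} for $\hat\chi^\lambda(\sigma)$ with $\lambda\vdash n$ Plancherel gives
\[
 n^{s/2}MT_u^\lambda(\sigma)=W\cdot n^{s/2}\hat\chi^\lambda(\sigma)+\smallo_P(1)\overset{d}\to u\cdot\prod_{k\ge2}k^{m_k(\rho)/2}\mathcal H_{m_k(\rho)}(\xi_k),
\]
as claimed. The main obstacle is precisely the uniform character-increment bound of the previous paragraph: for a general cycle type one must show that deleting one content from the Jucys--Murphy alphabet perturbs the associated symmetric function by a relative factor $O(n^{-1/2})$, uniformly over all removable corners, and it is for exactly this kind of term-by-term estimate that the symmetric-function-on-Jucys--Murphy-elements expansion is designed.
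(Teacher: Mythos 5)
Your overall architecture is exactly the paper's: write $MT_u^{\lambda}(\sigma)=W\cdot\hat{\chi}^{\lambda}(\sigma)+\sum_j\frac{\dim\mu_j}{\dim\lambda}\bigl(\hat{\chi}^{\mu_j}(\sigma)-\hat{\chi}^{\lambda}(\sigma)\bigr)$, show $W\overset{p}\to u$ via the atomlessness of the limiting co-transition measure (this is Lemma \ref{behaviour of co-transition}, and your argument for it is fine), and conclude by Slutsky and Kerov's CLT. The gap is that the entire difficulty of the theorem sits in the step you defer: the uniform increment bound $\hat{\chi}^{\lambda}_{\rho}-\hat{\chi}^{\mu}_{\rho}\in o_P(n^{-\wt(\rho)/2})$ with $\wt(\rho)=|\rho|-m_1(\rho)$, which is Proposition \ref{final corollary} and occupies essentially all of Section \ref{Asymptotic of the main term}. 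Your claim that ``expanding the symmetric function term by term'' gives a relative factor $O(n^{-1/2})$ uniformly over corners is asserted, not proved, and it is too optimistic as stated: a routine expansion of $p_{\nu}$ on the Jucys--Murphy alphabet only yields the weaker bound $o_P(n^{-(|\rho|-l(\rho))/2})$ (the paper's own Remark after \eqref{ecco2} makes exactly this point), and since $n^{\wt(\rho)/2}\cdot n^{-(|\rho|-l(\rho))/2}=n^{(l(\rho)-m_1(\rho))/2}$ diverges as soon as $\rho$ has a part $\geq 2$, that bound does not suffice for the theorem.

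The transposition computation you give works because $p_1(\mathcal{C}_{\lambda})-p_1(\mathcal{C}_{\mu})=y_j=O(\sqrt n)$ and the normalization is explicit; but already for $p_2(\Xi)$ the expansion contains the term $\cat(1)\,\alpha_{(1,1)}$, whose image under $\hat{\chi}^{\lambda}\circ\varphi_n$ is of order $n^{\downarrow 2}$, so the difference between the $n$ and $n-1$ normalizations contributes a term of order $n$ that would swamp the desired estimate. This is precisely why the paper introduces the \emph{modified} power sums $\tilde{p}_{\nu}$ (subtracting the Catalan-weighted $\alpha_{(1^{\nu_i/2+1})}$ contributions), proves via the combinatorial analysis of Lemma \ref{power sums on partial jucys murphy} that in the Kerov filtration the only top-degree term of $\tilde{p}_{\nu}(\Xi)$ without parts equal to $1$ is $\alpha_{\nu+\underline{1}}$, and then runs an induction on $|\nu|+q$ in Proposition \ref{final corollary} to peel off the lower-order character differences. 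Without some version of this mechanism (or another argument of comparable strength), your proof does not close; everything else in your write-up is a correct but routine reduction to that lemma.
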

In short, this theorem states that if we condition the partial trace of a representation matrix by the total trace, the partial trace appears to be deterministic. We actually prove multivariate version of the theorems stated in this introduction. In particular, for Theorem \ref{ora}, the joint distributions of $n^{\frac{|\supp(\sigma_i)|}{2}}MT_{u_i}^{\lambda}(\sigma_i)$ will converge to Hermite polynomials of the same Gaussian variables for a family $\{\sigma_i\}$ of permutations and $\{u_i\}$ of real numbers. Notice that this result generalizes \eqref{intro2}, although we use Kerov's result in the proof.

Informally, the main idea to prove Theorem \ref{ora} is to show that when $n$ grows,
\[\sum_{j<\bar{j}}\frac{\dim\mu_j}{\dim\lambda}\hat{\chi}^{\mu_j}(\sigma)\sim \left(\sum_{j<\bar{j}}\frac{\dim\mu_j}{\dim\lambda}\right)\hat{\chi}^{\lambda}(\sigma).\]
\smallskip

To achieve this result we need to estimate the asymptotic of $\hat{\chi}^{\lambda}(\sigma)-\hat{\chi}^{\mu}(\sigma)$, for $\lambda$ distributed with the Plancherel measure and $\mu\nearrow\lambda$. We consider power sums $p_{\nu}$, where $\nu$ is the cycle type of $\sigma$ in which we remove $1$ to each part, calculated on the multiset of contents $\mathcal{C}_{\lambda}$. We prove in Section \ref{Asymptotic of the main term} that
\begin{equation}
 p_{\nu}(\mathcal{C}_{\lambda})-p_{\nu}(\mathcal{C}_{\mu})\in o_P(n^{-\frac{|\supp(\sigma)|}{2}}) \qquad \mbox{implies}\qquad \hat{\chi}^{\lambda}(\sigma)-\hat{\chi}^{\mu}(\sigma)\in o_P(n^{-\frac{|\supp(\sigma)|}{2}}).
\end{equation}
 This will be accomplished by introducing modified power sums $\tilde{p}_{\nu}$ and by analyzing its highest terms with the appropriate filtration. We consider then $ p_{\nu}(\mathcal{C}_{\lambda})-p_{\nu}(\mathcal{C}_{\mu})$, which can be estimated by an expansion of the power sums.
\medskip

We cannot unfortunately prove asymptotic results on the remainder $RT_u^{\lambda}(\sigma)$, although we conjecture that 
\begin{equation}\label{conjecture on remainder}
n^{\frac{|\rho|-m_1(\rho)}{2}}RT_u^{\lambda}(\sigma)\overset{p}\to 0 
\end{equation}
for $u\in [0,1]$. In Section \ref{section conjecture} we describe a different conjecture, which would imply the one above, involving quotient of dimensions of irreducible representations. We give some numerical evidence. Our conjecture would imply
\[n^{\frac{|\rho|-m_1(\rho)}{2}}PT_u^{\lambda}(\sigma)\overset{d}\to u\cdot \prod_{k\geq 2} k^{m_k(\rho)/2} \mathcal{H}_{m_k(\rho)}(\xi_k).\]
\bigskip

Regarding the partial sum, our results on the total sum and the main term of the partial sum imply a law of large numbers for the partial sum (Theorem \ref{convergence of partial sum}):
\begin{theorem}
 Let $u\in \R$, $\sigma\in S_r$ and $\lambda\vdash n$. Then 
 \[ PS_u^{\lambda}(\sigma)\overset{p}\to u\cdot m_{\sigma}.\]
\end{theorem}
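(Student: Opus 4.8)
The plan is to start from the decomposition \eqref{intro3}, $PS_u^{\lambda}(\sigma)=MS_u^{\lambda}(\sigma)+RS_u^{\lambda}(\sigma)$ with main term $MS_u^{\lambda}(\sigma)=\sum_{j<\bar{j}}\frac{\dim\mu_j}{\dim\lambda}TS^{\mu_j}(\sigma)$ and remainder $RS_u^{\lambda}(\sigma)=\frac{\dim\mu_{\bar{j}}}{\dim\lambda}PS_{\bar{u}}^{\mu_{\bar{j}}}(\sigma)$, and to prove separately that $MS_u^{\lambda}(\sigma)\overset{p}{\to}u\,m_{\sigma}$ and $RS_u^{\lambda}(\sigma)\overset{p}{\to}0$; the two limits then add to the statement. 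I take $u\in[0,1]$, since for $u\ge 1$ the partial sum is the total sum and the claim reduces to Theorem \ref{convergence of total sum}.

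Two facts drive the argument. First, since $TS^{\mu}(\sigma)=\sum_{\tau\in S_r}c_{\tau}\,\hat{\chi}^{\mu}(\tau)$ with coefficients $c_{\tau}=c_{\tau}(\sigma)$ independent of $\mu$ and $|\hat{\chi}^{\mu}(\tau)|\le 1$, every total sum is bounded by $M_{\sigma}:=\sum_{\tau}|c_{\tau}|$; inserting this into \eqref{intro3} and inducting on $|\mu|$ gives a uniform bound $|PS_{u}^{\mu}(\sigma)|\le C_{\sigma}$ for all $\mu$ and $u$. Moreover $m_{\sigma}=c_{\mathrm{id}}$, because $\mathbb{E}_{PL}^{r}[\hat{\chi}^{\nu}(\tau)]=\delta_{\tau,\mathrm{id}}$. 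Second, evaluating the branching rule $\chi^{\lambda}\!\downarrow_{S_{n-1}}=\sum_{\mu\nearrow\lambda}\chi^{\mu}$ at $\tau\in S_r$ and dividing by $\dim\lambda$ yields
\[\sum_{\mu\nearrow\lambda}\frac{\dim\mu}{\dim\lambda}\,\hat{\chi}^{\mu}(\tau)=\hat{\chi}^{\lambda}(\tau),\]
so the co-transition average of a fixed renormalized character is exactly $\hat{\chi}^{\lambda}(\tau)$ (in particular $MS_1^{\lambda}(\sigma)=TS^{\lambda}(\sigma)$).

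For the main term I would expand $MS_u^{\lambda}(\sigma)-u\,m_{\sigma}=\sum_{\tau\neq\mathrm{id}}c_{\tau}\,A_{\tau}+(w-u)\,m_{\sigma}$, where $w=\sum_{j<\bar{j}}\frac{\dim\mu_j}{\dim\lambda}$ and $A_{\tau}=\sum_{j<\bar{j}}\frac{\dim\mu_j}{\dim\lambda}\hat{\chi}^{\mu_j}(\tau)$. Splitting $\hat{\chi}^{\mu_j}(\tau)=\hat{\chi}^{\lambda}(\tau)+\delta_j$ with $\delta_j=\hat{\chi}^{\mu_j}(\tau)-\hat{\chi}^{\lambda}(\tau)$ gives $A_{\tau}=w\,\hat{\chi}^{\lambda}(\tau)+\sum_{j<\bar{j}}\frac{\dim\mu_j}{\dim\lambda}\delta_j$: the first term vanishes by Kerov's theorem \eqref{intro2} (for $\tau\neq\mathrm{id}$, $\hat{\chi}^{\lambda}(\tau)\overset{p}{\to}0$), and the second is dominated by $\sum_{\mu\nearrow\lambda}\frac{\dim\mu}{\dim\lambda}|\hat{\chi}^{\mu}(\tau)-\hat{\chi}^{\lambda}(\tau)|=\mathbb{E}\big[\,|\hat{\chi}^{M}(\tau)-\hat{\chi}^{\lambda}(\tau)|\mid\lambda\,\big]$, where $M$ is the Plancherel down-step $\mu\nearrow\lambda$. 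By the character-difference estimate of Section \ref{Asymptotic of the main term} this inner difference tends to $0$ in probability, and being bounded it tends to $0$ in $L^1$, so the conditional expectation tends to $0$ in probability by Markov's inequality. Finally $|w-u|\le\frac{\dim\mu_{\bar{j}}}{\dim\lambda}\le\max_{\mu\nearrow\lambda}\frac{\dim\mu}{\dim\lambda}$, which vanishes as explained below; hence $MS_u^{\lambda}(\sigma)\overset{p}{\to}u\,m_{\sigma}$.

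For the remainder the uniform bound gives $|RS_u^{\lambda}(\sigma)|\le C_{\sigma}\,\frac{\dim\mu_{\bar{j}}}{\dim\lambda}\le C_{\sigma}\max_{\mu\nearrow\lambda}\frac{\dim\mu}{\dim\lambda}$, and the maximal co-transition weight converges to $0$ in probability because the rescaled transition (equivalently co-transition) measure of a Plancherel partition converges to the atomless semicircle law, which forbids surviving atoms. Adding the two limits proves $PS_u^{\lambda}(\sigma)\overset{p}{\to}u\,m_{\sigma}$. The crux is the second piece of $A_{\tau}$: the identity $\sum_{\mu\nearrow\lambda}\frac{\dim\mu}{\dim\lambda}\hat{\chi}^{\mu}(\tau)=\hat{\chi}^{\lambda}(\tau)$ controls only the full block sum, whereas $MS_u$ involves a partial one, so the cancellation it encodes is not enough on its own. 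What saves the argument is that the weighting $\frac{\dim\mu}{\dim\lambda}$ recasts the partial-block error as a conditional expectation over a random down-step, so the extreme corners—those with content of order $\sqrt{n}$ and hence large content power-sum defects—carry negligible weight and do not spoil the estimate; making this precise is exactly what the content power-sum expansion of Section \ref{Asymptotic of the main term} provides.
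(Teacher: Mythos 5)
Your proof is correct and follows essentially the same route as the paper: the same decomposition into main term and remainder, the expansion $TS^{\mu}(\sigma)=\sum_{\tau}c_{\tau}\hat{\chi}^{\mu}(\tau)$ of Proposition \ref{formula con traccia}, the character-difference estimate of Proposition \ref{final corollary}, a uniform bound on partial sums, and the vanishing of $\max_j\dim\mu_j/\dim\lambda$. The paper merely packages the main-term analysis as a comparison of $PS_u^{\lambda}(\sigma)$ with $\sum_{\tau}c_{\tau}PT_u^{\lambda}(\tau)$ (via Equation \eqref{formula between G and F} and the lemma that $PT_u^{\lambda}(\tau)\overset{p}\to 0$ for $\tau\neq\id$), and obtains the uniform bound from the band structure of $\pi^{\lambda}(\sigma)$ rather than by induction through the decomposition.
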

It is easy to see that the reminder for the partial sum goes asymptotically to zero, but we do not know how fast. For the same reasons as above, we cannot thus present a central limit theorem for the partial sum. Nevertheless, we prove a central limit theorem for the main term of the partial sum (Corollary \ref{convergence of G}): 
\begin{theorem}
  Let $u\in \R$, $\sigma\in S_r$ and $\lambda\vdash n$. Then  
 \[\sqrt{n}\cdot\left(MS_u^{\lambda}(\sigma)-u\cdot m_{\sigma}\right)\overset{d}\to u\cdot\mathcal{N}(0,2 v_{\sigma}^2).\]
\end{theorem}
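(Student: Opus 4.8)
The plan is to reduce the main term of the partial sum to the \emph{total} sum at level $n$, and then invoke the central limit theorem for the total sum (Theorem~\ref{convergence of total sum}). First I would exploit that, since $r$ is fixed and $n\to\infty$, the permutation $\sigma\in S_r$ fixes the box $n$, so $\pi^{\lambda}(\sigma)$ is block diagonal with respect to the decomposition of the restriction $\lambda\downarrow S_{n-1}=\bigoplus_{\mu\nearrow\lambda}\mu$ furnished by the seminormal representation, the block indexed by $\mu$ being $\pi^{\mu}(\sigma)$. Summing all matrix entries inside each block and recalling $\mathbf{1}^{T}\pi^{\mu}(\sigma)\mathbf{1}/\dim\mu=TS^{\mu}(\sigma)$ gives, on one hand, the decomposition \eqref{intro3}, hence
\[MS_u^{\lambda}(\sigma)=\sum_{j<\bar{j}}\frac{\dim\mu_j}{\dim\lambda}TS^{\mu_j}(\sigma),\qquad W_u:=\sum_{j<\bar{j}}\frac{\dim\mu_j}{\dim\lambda},\]
and, on the other hand (taking all blocks, i.e.\ $u=1$), the exact identity $TS^{\lambda}(\sigma)=\sum_{\mu\nearrow\lambda}\frac{\dim\mu}{\dim\lambda}TS^{\mu}(\sigma)$. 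By the branching rule $\sum_{\mu\nearrow\lambda}\dim\mu=\dim\lambda$, the weights $\dim\mu/\dim\lambda$ form the co-transition probability measure of $\lambda$, so $0\le u-W_u<\dim\mu_{\bar j}/\dim\lambda$; since the largest co-transition weight of a Plancherel diagram is $o_P(1)$ (equivalently, the rescaled transition measure converges to the semicircle law), this forces $W_u\overset{p}\to u$, consistently with Theorem~\ref{convergence of partial sum}.

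Subtracting the two displays, I would write
\[MS_u^{\lambda}(\sigma)=W_u\,TS^{\lambda}(\sigma)-\sum_{j\ge \bar{j}}\frac{\dim\mu_j}{\dim\lambda}\bigl(TS^{\mu_j}(\sigma)-TS^{\lambda}(\sigma)\bigr),\]
so that everything reduces to estimating the increments $TS^{\mu}(\sigma)-TS^{\lambda}(\sigma)$ for $\mu\nearrow\lambda$. Here I use the structural fact behind Theorem~\ref{convergence of total sum}, namely that $TS^{\lambda}(\sigma)=\sum_{\tau\in S_r}c_{\tau}\,\hat{\chi}^{\lambda}(\tau)$ with coefficients $c_{\tau}$ depending only on $\sigma$. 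Consequently $TS^{\mu}(\sigma)-TS^{\lambda}(\sigma)=\sum_{\tau\neq \id}c_{\tau}\bigl(\hat{\chi}^{\mu}(\tau)-\hat{\chi}^{\lambda}(\tau)\bigr)$, and the implication of Section~\ref{Asymptotic of the main term}, applied to each $\tau$ with $|\supp(\tau)|\ge 2$, yields $\max_{\mu\nearrow\lambda}\bigl|TS^{\mu}(\sigma)-TS^{\lambda}(\sigma)\bigr|\in o_P(n^{-1/2})$. Since the remaining weights sum to at most $1$, the correction term lies in $o_P(n^{-1/2})$, whence
\[\sqrt{n}\bigl(MS_u^{\lambda}(\sigma)-W_u\,m_{\sigma}\bigr)=W_u\cdot\sqrt{n}\bigl(TS^{\lambda}(\sigma)-m_{\sigma}\bigr)+o_P(1).\]

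Combining $W_u\overset{p}\to u$ with Theorem~\ref{convergence of total sum} through Slutsky's lemma then produces the Gaussian limit $u\cdot\mathcal{N}(0,2v_{\sigma}^2)$; the multivariate statement follows verbatim from the joint central limit theorem for total sums, the scalar $W_{u_i}$ attached to each coordinate passing to its limit $u_i$. The hard part will be the passage from the centering $W_u\,m_{\sigma}$ to the stated centering $u\,m_{\sigma}$: this requires $\sqrt{n}\,(u-W_u)\,m_{\sigma}\overset{p}\to 0$, i.e.\ that the single corner $\mu_{\bar j}$ straddling the index $u\dim\lambda$ carries co-transition weight $\dim\mu_{\bar j}/\dim\lambda\in o_P(n^{-1/2})$, equivalently that $RS_u^{\lambda}(\sigma)\in o_P(n^{-1/2})$. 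This is precisely the quantitative control on the remainder that is unavailable for the full partial sum (where only $RS_u\overset{p}\to0$ is known); for the main term it must be extracted either from a sharper estimate on the co-transition measure of a Plancherel diagram near its $u$-quantile, or by exhibiting cancellation between the straddling block and the neglected increments, and I expect it to be the genuinely delicate step of the argument.
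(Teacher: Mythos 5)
Your argument is essentially correct and lands on the same reduction as the paper, though by a slightly different route. The paper's proof starts from the identity $MS_u^{\lambda}(\sigma)=\sum_{\tau\in S_r}\mathbb{E}_{PL}^{r}\left[\hat{\chi}^{\nu}(\tau)TS^{\nu}(\sigma)\right]MT_u^{\lambda}(\tau)$ of Equation \eqref{formula between G and F}, isolates the $\tau=\id$ term (which is $m_\sigma\cdot MT_u^{\lambda}(\id)$), applies Theorem \ref{convergence of transformed co-transition} to the transposition terms to produce $u\cdot v_\sigma\cdot\mathcal{N}(0,2)$, and discards the $\wt(\tau)>2$ terms as $o_P(n^{-1/2})$. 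You instead write $MS_u^{\lambda}(\sigma)=W_u\,TS^{\lambda}(\sigma)-\sum_{j\ge\bar j}\frac{\dim\mu_j}{\dim\lambda}\bigl(TS^{\mu_j}(\sigma)-TS^{\lambda}(\sigma)\bigr)$ and control the increments via the character expansion of $TS$ together with Proposition \ref{final corollary}; this is legitimate (every $\tau\neq\id$ has $\wt(\tau)\ge 2$, so the increments are $o_P(n^{-1})$ uniformly over $\mu\nearrow\lambda$, and the discarded weights sum to at most $1$), and it lets you invoke Theorem \ref{convergence of total sum} plus Slutsky instead of Theorem \ref{convergence of transformed co-transition}. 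Since the latter theorem is itself proved from Proposition \ref{final corollary} and Kerov's CLT, the two routes consume the same ingredients; yours is marginally more self-contained for this particular corollary.

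The ``genuinely delicate step'' you flag at the end --- replacing the random centering $W_u\, m_\sigma$ by the deterministic $u\, m_\sigma$ at scale $\sqrt{n}$, which requires $\sqrt{n}\,(u-W_u)\overset{p}\to 0$, i.e. $\dim\mu_{\bar j}/\dim\lambda\in o_P(n^{-1/2})$ --- is a real issue, but it is not a defect of your route relative to the paper's: the paper's own proof of Corollary \ref{convergence of G} has exactly the same step, where $m_\sigma\cdot MT_u^{\lambda}(\id)=m_\sigma\cdot W_u$ is replaced by $m_\sigma\cdot u$ using only Lemma \ref{behaviour of co-transition}, which gives $W_u\overset{p}\to u$ with no rate. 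Neither Lemma \ref{convergence of co-transition} nor Conjecture \ref{conjecture 1} (whose numerics suggest an exponent $\alpha<1/2$) supplies the needed $o_P(n^{-1/2})$ bound on a single co-transition weight, and for a Plancherel-typical diagram with order $\sqrt{n}$ corners one expects individual weights of order $n^{-1/2}$ rather than smaller. So your proposal matches the paper's argument in substance and is, if anything, more candid about where it is incomplete; the issue of course disappears when $m_\sigma=0$.
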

As for the partial trace case, we show a multivariate generalization of the previous two theorems.
\medskip

It is worth mentioning that the partial trace $PT_u(A)$ and the partial sum $PS_u(A)$ have been studied by D'Aristotile, Diaconis and Newman in \cite{d2003brownian} for the case in which $A$ is a random matrix of the Gaussian Unitary Ensemble (GUE). The authors showed that in this case both partial trace and partial sum, after normalization, converge to Brownian motion, and thus it has a higher degree of randomness than the partial sum and partial trace of a representation matrix.

\medskip
In section 2 we recall results on the co-transition measure, the Young seminormal representation, partial permutations, shifted symmetric functions and we introduce the partial trace. In section 3 we prove the result concerning the asymptotics of the main term. In section 4 we study the total and partial sum, while in section 5 we describe a conjecture which would imply a convergence result on the partial trace.

\section{Preliminaries}\label{section Preliminaries}

\subsection{Notation}

Set $\lambda=(\lambda_1,\lambda_2,\ldots)\vdash n$ to be a partition of $n$, {\it i.e.} a nonincreasing sequence of positive integers whose sum is $n$. We associate partitions with \emph{Young diagrams} represented in English notation, as in the example below. In this setting a \emph{box} (in symbol $\Box$) is an element $(a,b)\in\mathbb{N}\times \mathbb{N}$, and we write $\Box=(a,b)\in\lambda$ if $1\leq a\leq \lambda_b$. We say that a box $\Box\in\lambda$ is a \emph{outer corner} if there exists another Young diagram with the same shape as $\lambda$ without that box. Likewise, a box $\Box\notin\lambda$ is a \emph{inner corner} if there exists a Young diagram with the same shape of $\lambda$ together with $\Box$. For a box $\Box=(a,b)$ the \emph{content} is defined as $c(\Box)=c(a,b)=a-b$. Of particular interest are the contents of outer and inner corners of $\lambda$, and we call them respectively $y_j$ and $x_j$, ordered in a way such that $x_1<y_1<x_2<\ldots<y_d<x_{d+1}$, where $d$ is the number of outer corners. For a outer corner box $\Box$ such that $y_j=c(\Box)$ for some $j$, the partition of $n-1$ resulting from removing that box is called $\mu_j$, and we write $\mu_j\nearrow\lambda$. We call such a $\mu_j$ a \emph{subpartition} of $\lambda$. Similarly, $\Lambda_j$ indicates the partition of $n+1$ obtained from adding the inner corner box of content $x_j$. 
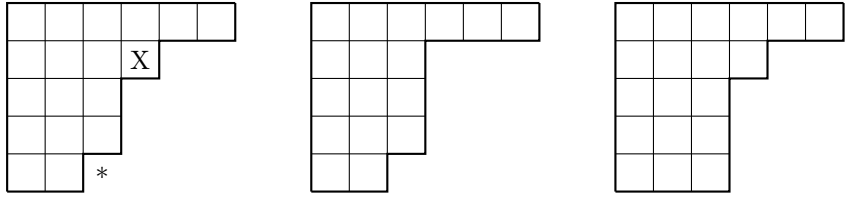
\begin{figure}

 \begin{center}
 \begin{tikzpicture}[scale=0.5]
\draw[thick](0,0)--(2,0)--(2,1)--(3,1)--(3,3)--(4,3)--(4,4)--(6,4)--(6,5);
\draw[thick](0,0)--(0,5)--(6,5);
\draw[thick](8,0)--(10,0)--(10,1)--(11,1)--(11,4)--(12,4)--(14,4)--(14,5);
\draw[thick](8,0)--(8,5)--(14,5);
\draw[thick](16,0)--(19,0)--(19,3)--(20,3)--(20,4)--(22,4)--(22,5);
\draw[thick](16,0)--(16,5)--(22,5);
\draw (1,0)--(1,5);
\draw (2,1)--(2,5);
\draw (3,3)--(3,5);
\draw (4,4)--(4,5);
\draw (5,4)--(5,5);
\draw (9,0)--(9,5);
\draw (10,1)--(10,5);
\draw (11,4)--(11,5);
\draw (12,4)--(12,5);
\draw (13,4)--(13,5);
\draw (17,0)--(17,5);
\draw (18,0)--(18,5);
\draw (19,3)--(19,5);
\draw (20,4)--(20,5);
\draw (21,4)--(21,5);
\draw (0,1)--(2,1);
\draw (0,2)--(3,2);
\draw (0,3)--(3,3);
\draw (0,4)--(4,4);
\draw (8,1)--(10,1);
\draw (8,2)--(11,2);
\draw (8,3)--(11,3);
\draw (8,4)--(11,4);
\draw (16,1)--(19,1);
\draw (16,2)--(19,2);
\draw (16,3)--(19,3);
\draw (16,4)--(20,4);
\node at (2.5,0.5){$\ast$};
\node at (3.5,3.5){X};
 \end{tikzpicture}
 \end{center}
 \caption{Example of Young diagrams. Starting from the left: $\lambda=(6,4,3,3,2)$, $\mu_3=(6,3,3,3,2)$ and $\Lambda_2=(6,4,3,3,3)$. }\label{example young diagrams}
 \end{figure}

 \begin{example}
 In Figure \ref{example young diagrams} we show three Young diagrams. The first is a partition $\lambda=(6,4,3,3,2)\vdash 18$ where we stress out a inner corner of content $-2$ and a outer corner of content $2$. The second is a subpartition corresponding to removing the box $\begin{array}{c}   \young(X) \end{array}$ from $\lambda$, while the third has the same shape of $\lambda$ with an additional box corresponding to $\begin{array}{c}   \young(\ast) \end{array}$.
\end{example}
We study the irreducible representation $\pi^{\lambda}(\sigma)$ associated to $\lambda$, and we call $\dim\lambda$ its dimension. It is a known fact that the set
\[\SYT(\lambda):=\{T: T \mbox{ is a standard Young tableau of shape }\lambda\}\]
indexes a basis for $\pi^{\lambda}$, so that $\dim\lambda=|\SYT(\lambda)|$.
\begin{example}\label{example on tableaux}
We present the set of standard Young tableaux of shape $\lambda=(3,2)$.
 \[ T_1=\begin{array}{c}\young(123,45)\end{array}\quad T_2=\begin{array}{c}\young(124,35)\end{array}\quad T_3=\begin{array}{c}\young(134,25)\end{array}\quad T_4=\begin{array}{c}\young(125,34)\end{array}\quad T_5=\begin{array}{c}\young(135,24)\end{array}.\]
\end{example}
  By setting $P_{PL}(\lambda)=(\dim\lambda)^2/n!$ we endow the set of partitions of $n$ with a probability measure, called the \emph{Plancherel measure} of size $n$. Throughout the paper we always assume that $\lambda$ is distributed with the Plancherel measure.
\subsection{Transition and co-transition measures}
Fix $\lambda\vdash n$. Two identities follow from the branching rule (see for example\hbox{\cite[section 2.8]{sagan2013symmetric}}):
\[\sum\limits_{\mu\nearrow\lambda}\dim\mu=\dim\lambda,\qquad \sum\limits_{\lambda\nearrow\Lambda}\dim\Lambda=(n+1)\dim\lambda.\]
 For $v\in\mathbb{R}$ we can define, respectively, the \emph{normalized co-transition distribution} and the \emph{normalized transition distribution}:
\[F_{ct}^{\lambda}(v):=\sum\limits_{y_j\leq v\sqrt{n} }\frac{\dim\mu_j}{\dim\lambda},\quad F_{tr}^{\lambda}(v):=\sum\limits_{x_j\leq v\sqrt{n} }\frac{\dim\Lambda_j}{(n+1)\dim\lambda}.\]
In \cite{kerov1993transition} Kerov proved that the normalized transition distribution converges almost surely to the semicircular distribution, when $\lambda$ is equipped with the Plancherel measure. Building on Kerov's work, we prove the same result for the normalized co-transition distribution.
\begin{lemma}\label{convergence of co-transition}
 When the set of partitions of $n$ is equipped with the Plancherel measure, then almost surely
 \[\lim_{n \to +\infty}F_{ct}^{\lambda}(v)=\frac{1}{2\pi}\int_{-2}^v \sqrt{4-t^2}\,dt\]
 holds for all $|v|\leq 2.$
\end{lemma}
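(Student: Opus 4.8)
The plan is to reduce the statement to Kerov's theorem on the transition measure, recalled above, by exhibiting an exact algebraic relation between the Cauchy transforms of the transition and co-transition measures. Write $P(z)=\prod_{i=1}^{d+1}(z-x_i)$ and $Q(z)=\prod_{i=1}^{d}(z-y_i)$ for the polynomials built from the inner- and outer-corner contents. The transition measure $m_{tr}=\sum_j\frac{\dim\Lambda_j}{(n+1)\dim\lambda}\delta_{x_j}$ has, by Kerov's classical product formula, the Cauchy transform $G_{tr}(z):=\int\frac{dm_{tr}(t)}{z-t}=\frac{Q(z)}{P(z)}$, and $F_{tr}^{\lambda}$ is the distribution function of the pushforward of $m_{tr}$ under $x\mapsto x/\sqrt n$. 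Likewise $F_{ct}^{\lambda}$ is the distribution function of the pushforward of the co-transition measure $m_{ct}=\sum_j\frac{\dim\mu_j}{\dim\lambda}\delta_{y_j}$ under $y\mapsto y/\sqrt n$, and I write $G_{ct}$ for its Cauchy transform. The whole argument rests on expressing $G_{ct}$ as an explicit function of $G_{tr}$.

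First I would establish this relation through residues. Partial-fraction expansion of $\frac{P(z)}{Q(z)}=\frac{1}{G_{tr}(z)}$ gives $\frac{P(z)}{Q(z)}=z+a+\sum_j\frac{r_j}{z-y_j}$ with $r_j=\frac{\prod_i(y_j-x_i)}{\prod_{i\neq j}(y_j-y_i)}$. The content formula for dimension ratios, which follows from the hook-length formula, identifies $\frac{\dim\mu_j}{\dim\lambda}=-\frac{r_j}{n}$, so that $G_{ct}(z)=-\frac1n\bigl(\frac{1}{G_{tr}(z)}-z-a\bigr)$. Two normalizations then come for free: the branching identity $\sum_{\mu\nearrow\lambda}\dim\mu=\dim\lambda$ forces $\sum_j r_j=-n$ (equivalently $\int t^2\,dm_{tr}=n$), while the centering identity $\sum_i x_i=\sum_i y_i$ — valid for every diagram, as one reads off its boundary lattice path — forces $a=0$. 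Together these yield the exact relation $G_{ct}(z)=-\frac1n\bigl(\frac{1}{G_{tr}(z)}-z\bigr)$, valid at every finite $n$.

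Next I would rescale. Setting $\tilde G_{tr}(w):=\sqrt n\,G_{tr}(\sqrt n\,w)$ and $\tilde G_{ct}(w):=\sqrt n\,G_{ct}(\sqrt n\,w)$ for the Cauchy transforms of the $\sqrt n$-rescaled measures and substituting $z=\sqrt n\,w$ above makes all the $n$-dependence cancel, leaving the clean identity $\tilde G_{ct}(w)=w-\frac{1}{\tilde G_{tr}(w)}$. Kerov's theorem states that the rescaled transition measure converges weakly to the semicircle law $\varrho_{sc}$ almost surely; since $t\mapsto\frac{1}{w-t}$ is bounded and continuous for every non-real $w$, this gives the almost sure pointwise convergence $\tilde G_{tr}(w)\to G_{sc}(w)$ on the upper half-plane, where $G_{sc}$ is the Cauchy transform of $\varrho_{sc}$ and is non-vanishing there. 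Feeding this into the rescaled relation and using the functional equation $G_{sc}(w)^2-w\,G_{sc}(w)+1=0$, equivalently $w-\frac{1}{G_{sc}(w)}=G_{sc}(w)$, yields $\tilde G_{ct}(w)\to G_{sc}(w)$ almost surely for all non-real $w$.

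Finally I would transfer this back to the distribution functions. Pointwise convergence of the Stieltjes transforms of the probability measures $\tilde m_{ct}$ to the Stieltjes transform of the semicircle law, which is itself a probability measure, implies weak convergence $\tilde m_{ct}\to\varrho_{sc}$ almost surely by the Stieltjes continuity theorem; because the semicircle distribution function is continuous, weak convergence upgrades to convergence at every point $v$, which is precisely the claimed limit for $F_{ct}^{\lambda}(v)$. I expect the main obstacle to be the second step: pinning down the content formula $\frac{\dim\mu_j}{\dim\lambda}=-\frac{r_j}{n}$ together with the vanishing $a=0$ with the correct signs, since it is exactly these two identities that turn the otherwise purely analytic transfer into a proof. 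Everything downstream of the exact relation $\tilde G_{ct}(w)=w-1/\tilde G_{tr}(w)$ is then formal, driven entirely by the semicircle functional equation.
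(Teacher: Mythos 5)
Your proposal is correct and takes essentially the same route as the paper: both reduce the claim to Kerov's almost sure convergence of the rescaled transition measure via the identity $\widetilde{ST}_{ct}(w)=w-1/\widetilde{ST}_{tr}(w)$, apply the semicircle functional equation to identify the limit, and conclude by the Stieltjes continuity theorem plus continuity of the semicircle distribution function. The only difference is that the paper simply cites Kerov's formulas $ST_{tr}=Q/P$ and $ST_{ct}=u-P/Q$, whereas you derive the relation from scratch via partial fractions, $\sum_i x_i=\sum_j y_j$ and the branching identity, and in doing so you handle the $1/n$ normalization and the $\sqrt{n}$ rescaling more carefully than the paper's displayed formula (which, read literally for the unrescaled transforms, is off by a factor of $n$).
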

\begin{proof}
We prove the lemma by showing that the Stieltjes transform of the normalized co-transition measure associated to the random variable $\lambda^{(n)}$ converges to the Stieltjes transform of the semicircular distribution. It is showed in \cite[corollary 1]{geronimo2003necessary} that pointwise convergence of Stieltjes transforms implies convergence in distribution, which again implies convergence of distribution functions at continuity points (recall that the semicircular distribution function is continuous everywhere). We claim thus that
\begin{equation}\label{Stieltjes transform equation}
 ST^{\lambda^{(n)}}_{ct}(u):=\sum_{\mu_j\nearrow\lambda}\frac{\dim\mu_j}{\dim\lambda^{(n)}}\frac{1}{u-y_j}\overset{a.s.}\to \frac{u-\sqrt{u^2-4}}{2}=:ST_{sc}(u).
\end{equation}
In \cite[sections 2 and 4]{kerov1993transition}, Kerov showed that the Stieltjes transform of the transition measure converges almost surely to the Stieltjes transform of the semicircular distribution. Moreover, he also showed that, if $Q^{\lambda}(u):=\prod_j (u-y_j)$ and $P^{\lambda}(u):=\prod_j (u-x_j)$, then 
\[ST^{\lambda^{(n)}}_{tr}(u)=\frac{Q^{\lambda}(u)}{P^{\lambda}(u)},\qquad ST^{\lambda^{(n)}}_{ct}(u)=u-\frac{P^{\lambda}(u)}{Q^{\lambda}(u)}.\]
Hence
\[ST^{\lambda^{(n)}}_{ct}(u)=u-\frac{1}{ST^{\lambda^{(n)}}_{tr}(u)}\overset{a.s.}\to u-\frac{2}{u-\sqrt{u^2-4}}=\frac{u-\sqrt{u^2-4}}{2}.\]
This proves \eqref{Stieltjes transform equation} and hence the lemma.
\end{proof}

\begin{figure}
\begin{center}
 \begin{tikzpicture}[scale=0.5]
\draw[->] (-1,0) -- (12,0);
\node at (12,0)[below right]{$v$};
\draw[->] (0,-1) -- (0,8);
\node at (0,8)[left]{$\tilde{u}$};
\draw[line width=.5mm](0,0)--(2,0);
\draw[fill=black] (0,0) circle (.07cm);
\draw (2,0) circle (.07cm);
\draw[line width=.5mm](2,2)--(3,2);
\draw[fill=black] (2,2) circle (.07cm);
\draw (3,2) circle (.07cm);
\draw[line width=.5mm](3,3)--(5,3);
\draw[fill=black] (3,3) circle (.07cm);
\draw (5,3) circle (.07cm);
\draw[line width=.5mm](5,6)--(8,6);
\draw[fill=black] (5,6) circle (.07cm);
\draw (8,6) circle (.07cm);
\draw[line width=.5mm](8,7)--(9,7);
\draw[fill=black] (8,7) circle (.07cm);
\draw (9,7) circle (.07cm);
 \node at (-0.3,2.5)[left] {$u$};
 \draw[dashed] (-0.3,2.5)--(12,2.5);
 \draw[dashed] (3,3)--(3,-0.3);
 \node at (3,-0.3)[below]{$\left(F_{ct}^{\lambda}\right)^{*}(u)=v^{\lambda}$};
  \draw[dashed] (8,7)--(8,-0.3);
   \node at (8,-0.3)[below]{$\frac{y_j}{\sqrt{n}}$};
 \end{tikzpicture}
\end{center}
 \caption{Example of graph of $\tilde{u}=F_{ct}^{\lambda}(v)$.}\label{picture co transition measure}
 \end{figure}
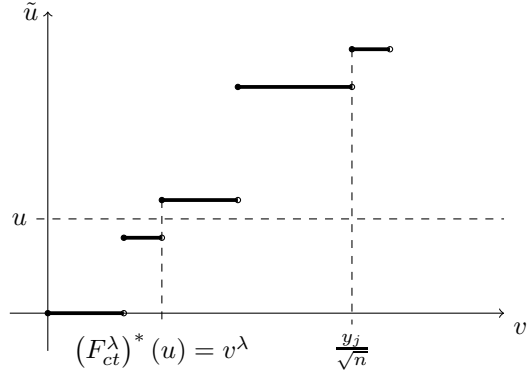

Set 
\[\left(F_{ct}^{\lambda}\right)^{*}(u):=\sup\left\{z\in\mathbb{R} \mbox{ s.t. } F_{ct}^{\lambda}(z)\leq u\right\}.\]
We consider $\left(F_{ct}^{\lambda}\right)^{*}$ as the inverse of the step function $F_{ct}^{\lambda}$. We want to show that $F_{ct}^{\lambda}(v^{\lambda})$ converges to $u$:
\begin{lemma}\label{behaviour of co-transition}
 For $v^{\lambda}=\left(F_{ct}^{\lambda}\right)^{*}(u)$ for a fixed $u$, then
 \[F_{ct}^{\lambda}(v^{\lambda})=\sum_{y_j\leq v^{\lambda}\sqrt{n}}\frac{\dim\mu_j}{\dim\lambda}\overset{p}\to u.\]
\end{lemma}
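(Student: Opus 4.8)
The plan is to bound the gap $F_{ct}^{\lambda}(v^{\lambda})-u$ by the size of the largest jump of the step function $F_{ct}^{\lambda}$, and then to show that this largest jump tends to $0$, which is exactly where Lemma \ref{convergence of co-transition} enters. Throughout I would assume $0<u<1$, so that $v^{\lambda}=(F_{ct}^{\lambda})^{*}(u)$ is finite; recall that the branching rule identity $\sum_{\mu\nearrow\lambda}\dim\mu=\dim\lambda$ makes $F_{ct}^{\lambda}$ a genuine probability distribution function, so this is the relevant range of $u$.

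The first step I would carry out is a purely deterministic quantile estimate. Write $F=F_{ct}^{\lambda}$; it is non-decreasing and right-continuous, with a jump of size $\dim\mu_j/\dim\lambda$ at each point $y_j/\sqrt{n}$. For every $z>v^{\lambda}$ we have $F(z)>u$, so right-continuity forces $F(v^{\lambda})=\lim_{z\downarrow v^{\lambda}}F(z)\ge u$. Conversely, if $z<v^{\lambda}=\sup\{z:F(z)\le u\}$ then there is some $z'>z$ with $F(z')\le u$, whence $F(z)\le u$ by monotonicity; letting $z\uparrow v^{\lambda}$ gives $F(v^{\lambda-})\le u$. Consequently
\[0\le F_{ct}^{\lambda}(v^{\lambda})-u\le F(v^{\lambda})-F(v^{\lambda-})\le J^{\lambda},\qquad J^{\lambda}:=\max_{j}\frac{\dim\mu_j}{\dim\lambda},\]
the middle quantity being precisely the jump of $F_{ct}^{\lambda}$ at $v^{\lambda}$. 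Since this holds for every $\lambda$, the lemma reduces to showing $J^{\lambda}\overset{p}\to 0$.

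The hard part will be this second step, controlling the jumps. A direct estimate of the individual dimension ratios $\dim\mu_j/\dim\lambda$ looks delicate, so instead I would exploit that the limit is \emph{continuous}. By Lemma \ref{convergence of co-transition}, $F_{ct}^{\lambda}(v)\to F_{sc}(v):=\frac{1}{2\pi}\int_{-2}^{v}\sqrt{4-t^2}\,dt$ almost surely at each $v$; this extends to all $v\in\R$ because outside $[-2,2]$ both functions are eventually constant, the rescaled contents concentrating in $[-2-\smallo(1),2+\smallo(1)]$ by the limit shape theorem. Since $F_{sc}$ is continuous, P\'olya's theorem upgrades pointwise convergence to uniform convergence, $\varepsilon^{\lambda}:=\sup_{v}|F_{ct}^{\lambda}(v)-F_{sc}(v)|\to 0$ almost surely. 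At any jump point $t$, continuity gives $F_{sc}(t^-)=F_{sc}(t)$, so the jump there is at most $(F_{sc}(t)+\varepsilon^{\lambda})-(F_{sc}(t)-\varepsilon^{\lambda})=2\varepsilon^{\lambda}$; maximizing over $t$ yields $J^{\lambda}\le 2\varepsilon^{\lambda}\to 0$ almost surely.

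Combining the two steps, $|F_{ct}^{\lambda}(v^{\lambda})-u|\le J^{\lambda}\to 0$ almost surely, which in particular gives the claimed convergence in probability. The single conceptual point that makes everything work is the continuity of the semicircular distribution function, since it is what lets P\'olya's theorem convert the pointwise statement of Lemma \ref{convergence of co-transition} into the uniform control that forces every atom of the co-transition measure to vanish.
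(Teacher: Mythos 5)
Your proposal is correct and follows essentially the same route as the paper: the paper likewise bounds $|F_{ct}^{\lambda}(v^{\lambda})-u|$ by the jump of $F_{ct}^{\lambda}$ at $v^{\lambda}$ (identified there as $\dim\mu_{\bar{j}}/\dim\lambda$) and then invokes the convergence of Lemma \ref{convergence of co-transition} to an atom-free limit to kill the maximal jump. The only difference is that you spell out, via P\'olya's theorem and uniform convergence, the step the paper states in one line, namely that $\max_j \dim\mu_j/\dim\lambda \overset{p}\to 0$.
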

\begin{proof}
We show in Figure \ref{picture co transition measure} an example of a normalized co-transition distribution. Since $F_{ct}^{\lambda}$ is right continuous then $u\leq F_{ct}^{\lambda}(v^{\lambda})$. Moreover, since the co-transition distribution is a step function, for each element of the image $\tilde{u}\in F_{ct}^{\lambda}(\mathbb{R})$ there exists $j$ such that $F_{ct}^{\lambda}(\frac{y_j}{\sqrt{n}})=\tilde{u}$. Call $\bar{j}$ the index corresponding to $F_{ct}^{\lambda}(v^{\lambda})$, that is, $F_{ct}^{\lambda}(\frac{y_{\bar{j}}}{\sqrt{n}})=F_{ct}^{\lambda}(v^{\lambda})$. Hence $\sum_{j<\bar{j}}\dim\mu_j/\dim\lambda\leq u$. Thus
\begin{itemize}
 \item $v^{\lambda}\leq \frac{y_{\bar{j}}}{\sqrt{n}}$, since $F_{ct}^{\lambda}(\frac{y_{\bar{j}}}{\sqrt{n}})\geq u$;
 \item $v^{\lambda}\geq \frac{y_{\bar{j}}}{\sqrt{n}}$, since for each $\epsilon>0$, $F_{ct}^{\lambda}(\frac{y_{\bar{j}}}{\sqrt{n}}-\epsilon)\leq u$.
\end{itemize}
Thus $v^{\lambda}= \frac{y_{\bar{j}}}{\sqrt{n}}$. Therefore
\[\sum_{j<\bar{j}}\frac{\dim\mu_j}{\dim\lambda} =F_{ct}^{\lambda}(v^{\lambda})-\frac{\dim\mu_{\bar{j}}}{\dim\lambda}\leq u\leq F_{ct}^{\lambda}(v^{\lambda}).\]
 Equivalently
 \[-\frac{\dim\mu_{\bar{j}}}{\dim\lambda}\leq u- F_{ct}^{\lambda}(v^{\lambda})\leq 0,\]
 and $\max \frac{\dim\mu_{j}}{\dim\lambda}\overset{p}\to 0$ because of the convergence of the normalized co-transition distribution towards an atom free distribution proved in the previous lemma. This proves the statement.
 \end{proof}

\subsection{Young seminormal representation and last letter order}\label{section young seminormal}

We recall the definition of Young seminormal representation, see \cite{young1977collected} for the original introduction, and \cite{GreeneRationalIdentity} for a more modern description. We need three preliminary definitions; set $\lambda\vdash n$, then  
\begin{enumerate}
 \item given a box $\Box\in \lambda$ recall that the \emph{content} $c(\Box)$ is the difference between the row index and the column index of the box. For $k\leq n$ we also write $c_k(T)$ for the content of the box containing the number $k$ in the tableau $T$. As instance, in the tableau $T_1$ of Example \ref{example on tableaux}, $c_4(T_1)=-1$, $c_1(T_1)=c_5(T_1)=0$, $c_2(T_1)=1$ and $c_3(T_1)=2$.
 \item For $k\leq n-1$ the \emph{signed distance} between $k$ and $k+1$ in the tableau $T$ is $d_k(T):= c_k(T)-c_{k+1}(T)$. For example $d_1(T_1)=d_2(T_1)=d_4(T_1)=-1$; $d_3(T_1)=3$.
 \item If $k$ and $k+1$ are in different columns and rows we define $(k,k+1)T$ as the standard Young tableau equal to $T$ but with the boxes containing $k$ and $k+1$ inverted. In the previous example $(3,4)T_1=T_2$, while $(2,3)T_1$ is not defined.
\end{enumerate}
\begin{defi}\label{def young seminormal}
 The Young seminormal representation defines the matrix associated to \hbox{$ \pi^{\lambda}((k,k+1)) $} entrywise in the following way: if $T$ and $S$ are standard Young tableaux of shape $\lambda$, then
\[\pi^{\lambda}((k,k+1))_{T,S}=\left\{\begin{array}{cr}
1/d_k(T)&\mbox{ if } T=S;\\
\sqrt{1-\frac{1}{d_k(T)^2}}&\mbox{ if } (k,k+1)T=S;\\
0&\mbox{else.}\end{array}\right.
\]
\end{defi}

Notice that the adjacent transpositions $(k,k+1)$, $k=1,\ldots, n-1$, generate the group $S_n$, hence $\pi^{\lambda}(\sigma)$ is well defined for all $\sigma\in S_n$. 
\begin{example}\label{example of matrices}
 Consider $\lambda=(3,2)$, $\sigma=(2,4,3)$. We compute $\pi^{\lambda}((2,4,3)):$
  \[\pi^{\lambda}((2,4,3))=\pi^{\lambda}((3,4)(2,3))=\pi^{\lambda}((3,4))\pi^{\lambda}((2,3))\]
   \[=\left[\begin{array}{ccccc}
    -1/3&{\scriptstyle\sqrt{8/9}}&0&0&0\\
    {\scriptstyle\sqrt{8/9}}&1/3&0&0&0\\
    0&0&1&0&0\\
    0&0&0&1&0\\
    0&0&0&0&-1
   \end{array}\right]\cdot
  \left[\begin{array}{ccccc}
    1&0&0&0&0\\
    0&-1/2&{\scriptstyle\sqrt{3/4}}&0&0\\
    0&{\scriptstyle\sqrt{3/4}}&1/2&0&0\\
    0&0&0&-1/2&{\scriptstyle\sqrt{3/4}}\\
    0&0&0&{\scriptstyle\sqrt{3/4}}&1/2
   \end{array}\right]\]
 \[=  \left[\begin{array}{ccccc}
    -1/3&-{\scriptstyle\sqrt{2/9}}&{\scriptstyle\sqrt{2/3}}&0&0\\
    {\scriptstyle\sqrt{8/9}}&-1/6&{\scriptstyle\sqrt{1/12}}&0&0\\
    0&{\scriptstyle\sqrt{3/4}}&1/2&0&0\\
    0&0&0&-1/2&{\scriptstyle\sqrt{3/4}}\\
    0&0&0&-{\scriptstyle\sqrt{3/4}}&-1/2
   \end{array}\right]\]
\end{example}

We define also the \emph{last letter order} in $\SYT(\lambda)$: let $T,S$ be standard Young tableaux, then $T\leq S$ if the box containing $n$ lies in a row in $T$ which is lower that $S$; if $n$ lies in the same row in both tableaux, then we look at the rows containing $n-1$, and so on. Notice that in the list of tableaux of shape $(3,2)$ of Example \ref{example on tableaux} the tableaux are last letter ordered; also, the entries of the matrices in Example \ref{example of matrices} are ordered accordingly. We write $\pi^{\lambda}(\sigma)_{i,j}$ instead of $\pi^{\lambda}(\sigma)_{T_i,T_j}$, where $T_i$ is the $i-$th tableau of shape $\lambda$ in the last letter order.

\subsection{The partial trace and its main term}\label{chapter the partial trace}
Let $\lambda\vdash n$  and $\sigma\in S_r$ a permutation of the set $\{1,\ldots,r\}$ with $r\leq n$. We see $S_r$ as a subgroup of $S_n$ by adding fixed points, so that $\pi^{\lambda}(\sigma)$ is well defined. Similarly, if $\rho\vdash r\leq n$, then we define $\chi^{\lambda}_{\rho}=\chi^{\lambda}_{\rho\cup 1^{n-r}}$ and $\hat{\chi}^{\lambda}_{\rho}=\hat{\chi}^{\lambda}_{\rho\cup 1^{n-r}}$.
\begin{defi}
 Let $\lambda\vdash n$, $\sigma\in S_r$, $u\in [0,1]$. Define the partial trace as
 \[PT_{u}^{\lambda}(\sigma):=\sum_{i\leq u \dim\lambda}\frac{\pi^{\lambda}(\sigma)_{i,i}}{\dim\lambda}.\]

\end{defi}
When $u=1$ then the partial trace correspond to the normalized trace 
\[\hat{\chi}^{\lambda}(\sigma)=\frac{\chi^{\lambda}(\sigma)}{\dim\lambda}=\sum_{i\leq \dim\lambda}\frac{\pi^{\lambda}(\sigma)_{i,i}}{\dim\lambda}.\]
Our choice of order on the tableaux implies that, as long as $\sigma\in S_r$ and $r\leq n-1$,
\begin{equation}\label{decomposition of the representation matrix}
 \pi^{\lambda}(\sigma)= \left[
\begin{array}{c c c c}
\pi^{\mu_1}(\sigma) &\mathbb{0}&\mathbb{0}&\cdots\\ 
\mathbb{0} &\pi^{\mu_2}(\sigma) &\mathbb{0}\\
\mathbb{0}&\mathbb{0}&\pi^{\mu_3}(\sigma) \\
\vdots&&&\ddots
\end{array}\right].
\end{equation}

Hence $\hat{\chi}^{\lambda}(\sigma)=\sum_j \frac{\dim\mu_j}{\dim\lambda}\hat{\chi}^{\mu_j}(\sigma)$. This decomposition of the total trace can be easily generalized to the partial trace:
\begin{proposition}[Decomposition of the partial trace]\label{first order decomposition of the partial trace}
 Fix $u\in [0,1]$, $\lambda\vdash n$ and $\sigma\in S_r$ with $r\leq n-1$. Set $\left(F_{ct}^{\lambda}\right)^{*}(u)=\frac{y_{\bar{j}}}{\sqrt{n}}=v^{\lambda}$ for some $\bar{j}$ as in Lemma \ref{behaviour of co-transition}. Define
 \[\bar{u}=\frac{\dim\lambda}{\dim\mu_{\bar{j}}}\left(u-\sum_{j<\bar{j}}\dim\mu_j\right)<1, \]
 then
\begin{equation}\label{first decomposition of partial trace}
 PT_u^{\lambda}(\sigma)=\sum_{j<\bar{j}}\frac{\dim\mu_j}{\dim\lambda}\hat{\chi}^{\mu_j}(\sigma)+\frac{\dim\mu_{\bar{j}}}{\dim\lambda}PT_{\bar{u}}^{\mu_{\bar{j}}}(\sigma).
\end{equation}
\end{proposition}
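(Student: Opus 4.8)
The plan is to read the statement directly off the block decomposition \eqref{decomposition of the representation matrix} together with the location of the cut-off supplied by Lemma \ref{behaviour of co-transition}; there is no genuine analysis involved, only a careful partition of a finite sum into blocks. Since $\sigma\in S_r$ with $r\le n-1$ fixes the letter $n$, the last letter order groups the rows and columns of $\pi^{\lambda}(\sigma)$ into consecutive blocks indexed by the outer corner occupied by $n$; removing that corner yields the subpartition $\mu_j$, and the blocks appear in order of increasing $y_j$, i.e.\ in the same order $\mu_1,\mu_2,\dots$ used for the co-transition measure. The reasoning behind \eqref{decomposition of the representation matrix} also shows that, within the $j$-th block, the induced last letter order coincides with the last letter order of $\mu_j$ itself, so that block is exactly the seminormal matrix $\pi^{\mu_j}(\sigma)$ in its own ordering. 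Hence the diagonal of $\pi^{\lambda}(\sigma)$ is the concatenation, for $j=1,2,\dots$, of the diagonals of $\pi^{\mu_j}(\sigma)$, the $j$-th run having length $\dim\mu_j$.

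Next I would locate the index $u\dim\lambda$ among these runs. By Lemma \ref{behaviour of co-transition}, writing $v^{\lambda}=(F_{ct}^{\lambda})^{*}(u)=y_{\bar j}/\sqrt n$, one has
\[\sum_{j<\bar j}\frac{\dim\mu_j}{\dim\lambda}\;\le\;u\;\le\;\sum_{j\le\bar j}\frac{\dim\mu_j}{\dim\lambda},\]
so after multiplying by $\dim\lambda$ the cut-off $u\dim\lambda$ falls inside the $\bar j$-th run: the summation $\sum_{i\le u\dim\lambda}$ covers the runs $1,\dots,\bar j-1$ completely and then the first $u\dim\lambda-\sum_{j<\bar j}\dim\mu_j$ diagonal entries of $\pi^{\mu_{\bar j}}(\sigma)$. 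Splitting $PT_u^{\lambda}(\sigma)$ accordingly gives
\[PT_u^{\lambda}(\sigma)=\sum_{j<\bar j}\frac{1}{\dim\lambda}\sum_{i\le\dim\mu_j}\pi^{\mu_j}(\sigma)_{i,i}+\frac{1}{\dim\lambda}\sum_{i\le u\dim\lambda-\sum_{j<\bar j}\dim\mu_j}\pi^{\mu_{\bar j}}(\sigma)_{i,i}.\]

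The two pieces are then identified with the asserted terms. Each complete run is a full trace, $\sum_{i\le\dim\mu_j}\pi^{\mu_j}(\sigma)_{i,i}=\chi^{\mu_j}(\sigma)=\dim\mu_j\,\hat{\chi}^{\mu_j}(\sigma)$, which produces $\sum_{j<\bar j}\tfrac{\dim\mu_j}{\dim\lambda}\hat{\chi}^{\mu_j}(\sigma)$. For the partial run I would set $\bar u$ so that $\bar u\dim\mu_{\bar j}=u\dim\lambda-\sum_{j<\bar j}\dim\mu_j$, that is $\bar u=\tfrac{\dim\lambda}{\dim\mu_{\bar j}}\bigl(u-\sum_{j<\bar j}\tfrac{\dim\mu_j}{\dim\lambda}\bigr)$; the second sum is then precisely $\dim\mu_{\bar j}\,PT_{\bar u}^{\mu_{\bar j}}(\sigma)$, giving \eqref{first decomposition of partial trace}. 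The displayed bounds immediately yield $0\le\bar u\le 1$, so $PT_{\bar u}^{\mu_{\bar j}}(\sigma)$ is well defined.

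The step requiring the most care is the bookkeeping near the boundary rather than any real difficulty. First I would check that $\bar u\dim\mu_{\bar j}$ is the integer $\lfloor u\dim\lambda\rfloor-\sum_{j<\bar j}\dim\mu_j$ (which holds because $\sum_{j<\bar j}\dim\mu_j\in\Z$), so that the truncation in $\sum_{i\le u\dim\lambda}$ matches the truncation defining $PT_{\bar u}^{\mu_{\bar j}}$. Second, the strict bound $\bar u<1$ in the statement is exactly what the $\sup$ in the definition of $(F_{ct}^{\lambda})^{*}$ provides: if $u$ equals a cumulative mass $\sum_{j\le k}\dim\mu_j/\dim\lambda$ the supremum pushes the index forward to $\bar j=k+1$, making the partial run empty ($\bar u=0$), while otherwise $u$ lies strictly below $\sum_{j\le\bar j}\dim\mu_j/\dim\lambda$ and $\bar u<1$. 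Everything else is the elementary decomposition of a finite sum into its blocks.
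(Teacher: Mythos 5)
Your proof is correct and follows essentially the same route as the paper's: both split the diagonal sum $\sum_{i\le u\dim\lambda}$ at the index $\sum_{j<\bar j}\dim\mu_j$ using the block decomposition \eqref{decomposition of the representation matrix} and Lemma \ref{behaviour of co-transition}, identify the complete blocks as full normalized characters, and recognise the leftover run as $\dim\mu_{\bar j}\,PT_{\bar u}^{\mu_{\bar j}}(\sigma)$ with the stated $\bar u$. Your extra remarks on the integer bookkeeping at the cut-off and on why $\bar u<1$ are correct refinements of points the paper passes over silently.
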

\begin{proof}
 We can decompose the partial trace as
 \[PT_u^{\lambda}(\sigma)=\sum_{i\leq u\dim\lambda}\frac{\pi^{\lambda}(\sigma)_{i,i}}{\dim\lambda}=\sum_{i\leq \sum\limits_{j<\bar{j}}\dim\mu_j}\frac{\pi^{\lambda}(\sigma)_{i,i}}{\dim\lambda}+\sum_{\sum\limits_{j<\bar{j}}\dim\mu_j<i\leq u\dim\lambda}\frac{\pi^{\lambda}(\sigma)_{i,i}}{\dim\lambda}.\]
 The first sum in the RHS is 
 \[\sum_{i\leq \sum\limits_{j<\bar{j}}\dim\mu_j}\frac{\pi^{\lambda}(\sigma)_{i,i}}{\dim\lambda}=\sum_{j<\bar{j}}\frac{\dim\mu_j}{\dim\lambda}\hat{\chi}^{\mu_j}(\sigma).\]
 We consider now the second sum of the RHS: we have $u\leq \sum\limits_{j\leq\bar{j}}\frac{\dim\mu_j}{\dim\lambda}$ by the definition of $\bar{j}$, thus
 \[\sum\limits_{j<\bar{j}}\dim\mu_j<i\leq u\dim\lambda\leq \sum\limits_{j\leq\bar{j}}\dim\mu_j.\]
Hence $\pi^{\lambda}(\sigma)_{i,i}=\pi^{\mu_{\bar{j}}}(\sigma)_{\tilde{i},\tilde{i}}$, where $\tilde{i}=i-\sum\limits_{j<\bar{j}}\dim\mu_j$, so that
\[0<\tilde{i}\leq u\dim\lambda-\sum\limits_{j<\bar{j}}\dim\mu_j=\bar{u}\dim\mu_{\bar{j}}.\]
Therefore 
\[\sum_{\sum\limits_{j<\bar{j}}\dim\mu_j<i\leq u\dim\lambda}\frac{\pi^{\lambda}(\sigma)_{i,i}}{\dim\lambda}=\sum_{\tilde{i}\leq \bar{u}\dim\mu_{\bar{j}}}\frac{\pi^{\lambda}(\sigma)_{\tilde{i},\tilde{i}}}{\dim\lambda}=\frac{\dim\mu_{\bar{j}}}{\dim\lambda}PT_{\bar{u}}^{\mu_{\bar{j}}}(\sigma),\]
and the proposition is proved.
\end{proof}

We define $\sum_{y_j\leq v^{\lambda}\sqrt{n}}\frac{\dim\mu_j}{\dim\lambda}\hat{\chi}^{\mu_j}(\sigma)$ to be the \emph{main term for the partial trace} (denoted $MT^{\lambda}_u(\sigma)$), while $\frac{\dim\mu_{\bar{j}}}{\dim\lambda}PT_{\bar{u}}^{\mu_{\bar{j}}}(\sigma)$ is called the \emph{remainder} (denoted $R^{\lambda}_u(\sigma)$). The main goal of this paper is to establish the asymptotic behavior of the main term for the partial trace. We settle some notation: if a permutation $\sigma$ has cycle type $\rho$ we write $m_k(\sigma)=m_k(\rho)$ for the number of parts of $\rho$ which are equal to $k$ or, equivalently, for the number of cycles of $\sigma$ which are of length $k$. We define the \emph{weights} of $\sigma$ and $\rho$ as $\wt(\sigma):=\wt(\rho):=|\supp(\sigma)|=|\rho|-m_1(\rho).$ We recall now a result due to \cite{kerov1993gaussian}, with a complete proof given by \cite{ivanov2002kerov} and, independently, by \cite{hora1998central}:
\begin{theorem}\label{convergence of characters}
Fix $\rho_1\vdash r_1, \rho_2\vdash r_2,\ldots$. The asymptotic behavior of the irreducible character $\hat{\chi}^{\lambda}$ is given by:
\[\left\{n^{\frac{\wt(\rho_i)}{2}}\hat{\chi}^{\lambda}_{\rho_i}\right\}\overset{d}\to \left\{\prod_{k\geq 2} k^{m_k(\rho_i)/2} \mathcal{H}_{m_k(\rho_i)}(\xi_k)\right\},\]
where $\{\xi_k\}_{k\geq 2}$ is a sequence of independent standard Gaussian random variables, and $\mathcal{H}_m(x)$, $m\geq 1$, is the Hermite polynomial of degree $m$ defined by the recurrence relation $x\mathcal{H}_m(x)=\mathcal{H}_{m+1}(x)+m\mathcal{H}_{m-1}(x)$ and initial data $\mathcal{H}_0(x)=1$ and $\mathcal{H}_1(x)=x$.
\end{theorem}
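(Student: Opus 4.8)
The plan is to reproduce the cumulant method of Ivanov--Olshanski \cite{ivanov2002kerov} (see also \cite{hora1998central}), which is the natural route given the Jucys--Murphy machinery of this paper. The first move is to replace the renormalized characters by the \emph{central characters} $\Sigma_\mu(\lambda):=(n)_{|\mu|}\,\hat\chi^{\lambda}_{\mu}$, where $(n)_k=n(n-1)\cdots(n-k+1)$, $\mu$ is the reduced cycle type (parts $\geq 2$) padded with fixed points, and $|\mu|=\wt(\rho)$. These are exactly the scalars by which the conjugacy-class sums of $\mathbb{C}[S_n]$ act on $\pi^{\lambda}$, and by the theory recalled in the introduction they are symmetric functions of the Jucys--Murphy elements, hence expressible through the content power sums $p_\nu(\mathcal{C}_\lambda)$. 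Since $(n)_{|\mu|}/n^{|\mu|}\to 1$, the statement $n^{\wt(\rho)/2}\hat\chi^{\lambda}_{\rho}\overset{d}\to(\cdots)$ is equivalent to $\Sigma_\mu/n^{|\mu|/2}\overset{d}\to(\cdots)$, so the whole theorem becomes a limit law for the rescaled central characters $X_k:=\Sigma_{(k)}/n^{k/2}$ and their products. As a first, purely algebraic input, column orthogonality gives $\mathbb{E}^n_{PL}[\hat\chi^{\lambda}_{\rho}]=\delta_{\rho,(1^n)}$, so $\mathbb{E}^n_{PL}[\Sigma_\mu]=0$ for every $\mu$ with a part $\geq 2$; this centres the family.

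Next I would set up the two facts that, via the method of cumulants, produce a Gaussian limit: the joint cumulants of the centred $\Sigma$'s, of order $s\geq 3$, are of strictly smaller order than the product scaling $n^{(k_1+\cdots+k_s)/2}$, while the second cumulant is of the full order $n^{(k_1+k_2)/2}$ but asymptotically \emph{diagonal}. Concretely I would prove $\operatorname{Cov}(\Sigma_j,\Sigma_k)\sim k\,\delta_{jk}\,n^{(j+k)/2}$, so that $\operatorname{Cov}(X_j,X_k)\to k\,\delta_{jk}$, and $\kappa_s(\Sigma_{k_1},\dots,\Sigma_{k_s})=\smallo(n^{(k_1+\cdots+k_s)/2})$ for $s\geq 3$. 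By the cumulant criterion this forces $\{X_k\}$ to converge jointly to independent Gaussians $X_k\to\sqrt{k}\,\xi_k$, with independence across distinct cycle lengths coming precisely from the diagonal covariance. This is the ``approximate factorization of characters'': the leading behaviour of a product of central characters factorizes, and all genuine correlations are lower order.

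The appearance of the Hermite polynomials is then an algebraic consequence of the multiplication rule in the class algebra. Writing $W_m:=\lim \Sigma_{(k^m)}/n^{mk/2}$, the structure constants for multiplying a single $k$-cycle class by the class of $m$ disjoint $k$-cycles give, to leading order, $\Sigma_{(k)}\cdot\Sigma_{(k^m)}=\Sigma_{(k^{m+1})}+mk\,\Sigma_{(k^{m-1})}+(\text{lower order})$, the correction counting the $m$ ways the new cycle overlaps an existing one. Passing to the limit yields
\[ (\sqrt{k}\,\xi_k)\,W_m=W_{m+1}+mk\,W_{m-1},\qquad W_0=1,\ W_1=\sqrt{k}\,\xi_k, \]
which is exactly the Hermite three-term recurrence $x\mathcal{H}_m=\mathcal{H}_{m+1}+m\mathcal{H}_{m-1}$ after the substitution $W_m=k^{m/2}\mathcal{H}_m(\xi_k)$. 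Combining this with the independence across $k$ gives the product $\prod_{k\geq 2}k^{m_k(\rho)/2}\mathcal{H}_{m_k(\rho)}(\xi_k)$, and the multivariate version over several $\rho_i$ follows since the same Gaussians $\xi_k$ govern every cycle length.

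The main obstacle is the cumulant estimate of the second paragraph, which is the genuinely combinatorial heart of the proof; everything else is bookkeeping. There are two standard ways to establish it, and I would use the one aligned with this paper. Realizing $\Sigma_\mu$ through $p_\nu(\mathcal{C}_\lambda)$, the required bound becomes a control on the highest-weight terms of the content power sums under the appropriate filtration, together with the asymptotics of the content (equivalently transition/co-transition) measure supplied by Kerov's semicircle law in the form of Lemma \ref{convergence of co-transition} and its transition-measure analogue; this is exactly the type of expansion the paper develops in Section \ref{Asymptotic of the main term}. The alternative is the genus expansion for products of conjugacy-class sums in $\mathbb{C}[S_n]$, where each higher cumulant carries an extra factor of $n^{-1}$ per unit of genus, making the third and higher cumulants vanish after rescaling. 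Either way, identifying the top-weight part of $\Sigma_{(k)}$ with the free cumulant $R_{k+1}$ and evaluating it against the semicircular limit is what pins down the variance $k$ and closes the argument.
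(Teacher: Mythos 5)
The paper does not prove Theorem \ref{convergence of characters}: it is stated as a recalled result, attributed to \cite{kerov1993gaussian} with complete proofs in \cite{ivanov2002kerov} and \cite{hora1998central}, and is used purely as an external input (for instance in the proof of Theorem \ref{convergence of transformed co-transition}). So there is no internal argument to compare yours against; what you have written is a reconstruction of the cited Ivanov--Olshanski proof rather than an alternative to anything in the paper. As such a reconstruction it is essentially correct: the centring $\mathbb{E}^n_{PL}[\hat\chi^{\lambda}_{\rho}]=\delta_{\rho,(1^n)}$ (equivalently $\mathbb{E}[p^{\sharp}_{\rho}]=\delta_{\rho,(1^k)}\,n^{\downarrow k}$) is right, the natural rescaling is $p^{\sharp}_{\rho}/n^{|\rho|_1/2}$ since $n^{\wt(\rho)/2}\hat\chi^{\lambda}_{\rho}\sim p^{\sharp}_{\rho}/n^{(|\rho|+m_1(\rho))/2}$, and the multiplication rule you invoke, $\Sigma_{(k)}\cdot\Sigma_{(k^m)}=\Sigma_{(k^{m+1})}+mk\,\Sigma_{(k^{m-1})}\cdot n^{\downarrow k}/(\text{falling factorials})+(\text{lower order})$, is exactly the paper's Lemma \ref{prop 4.12 of IO} (itself quoted from \cite{ivanov2002kerov}); dividing by $n^{k(m+1)/2}$ turns it into the Hermite three-term recurrence, and the coefficient $k\,m_k(\rho)$ together with $\mathbb{E}[p^{\sharp}_{1^k}]=n^{\downarrow k}$ gives the diagonal covariance $k\,\delta_{jk}$. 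Two remarks on your final paragraph. First, the variance identification is purely algebraic in this framework; you do not need to evaluate the free cumulant $R_{k+1}$ against the semicircular distribution, nor Lemma \ref{convergence of co-transition}, to pin down the constant $k$ --- that is the Kerov/Biane route, and mixing the two makes the sketch look more analytic than it needs to be. Second, the higher-cumulant bound you correctly identify as the heart of the matter is delivered precisely by the $|\cdot|_1$-filtration bookkeeping: every term produced by iterated multiplication other than the two displayed ones has strictly smaller Kerov degree, hence after rescaling by $n^{|\cdot|_1/2}$ its expectation is $\smallo(1)$. With those understandings your outline matches the standard proof; it just is not the paper's proof, because the paper offers none.
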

Our result is the following:
\begin{theorem}\label{convergence of transformed co-transition}
 let $\sigma_1,\sigma_2,\ldots$ be permutations of cycle type respectively $\rho_1,\rho_2,\ldots$ and $u_1,u_2,\ldots\in [0,1]$. Let $\{\xi_k\}_{k\geq 2}$ be a sequence of independent standard Gaussian variables. Then 
 \[\left\{n^{\frac{|\wt(\rho_i)}{2}}MT_{u_i}^{\lambda}(\sigma_i)\right\}_{i\geq1}\overset{d}\to \left\{u_i\cdot \prod_{k\geq 2} k^{m_k(\rho_i)/2} \mathcal{H}_{m_k(\rho_i)}(\xi_k)\right\}_{i\geq1}.\]
\end{theorem}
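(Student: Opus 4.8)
The plan is to show that, once rescaled by $n^{\wt(\rho_i)/2}$, the main term $MT_{u_i}^{\lambda}(\sigma_i)$ is, up to a negligible error, the product of the co-transition mass $F_{ct}^{\lambda}(v^{\lambda}_i)$ and the full normalized character $\hat{\chi}^{\lambda}(\sigma_i)$, and then to invoke Kerov's central limit theorem (Theorem \ref{convergence of characters}) together with the convergence of that mass (Lemma \ref{behaviour of co-transition}). Writing $v^{\lambda}_i=(F_{ct}^{\lambda})^{*}(u_i)$, I would first split each main term, by adding and subtracting $\hat{\chi}^{\lambda}(\sigma_i)$, as
\[MT_{u_i}^{\lambda}(\sigma_i)=\Bigg(\sum_{y_j\leq v^{\lambda}_i\sqrt{n}}\frac{\dim\mu_j}{\dim\lambda}\Bigg)\hat{\chi}^{\lambda}(\sigma_i)+\sum_{y_j\leq v^{\lambda}_i\sqrt{n}}\frac{\dim\mu_j}{\dim\lambda}\big(\hat{\chi}^{\mu_j}(\sigma_i)-\hat{\chi}^{\lambda}(\sigma_i)\big).\]
The coefficient of $\hat{\chi}^{\lambda}(\sigma_i)$ is exactly $F_{ct}^{\lambda}(v^{\lambda}_i)$, which converges in probability to $u_i$ by Lemma \ref{behaviour of co-transition}; the second sum is the error I must show is negligible after rescaling.

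For the first piece, the multivariate form of Theorem \ref{convergence of characters} gives that $\{n^{\wt(\rho_i)/2}\hat{\chi}^{\lambda}(\sigma_i)\}_i$ converges jointly in distribution to $\{\prod_{k\geq 2}k^{m_k(\rho_i)/2}\mathcal{H}_{m_k(\rho_i)}(\xi_k)\}_i$. Since each scalar factor $F_{ct}^{\lambda}(v^{\lambda}_i)$ tends in probability to the constant $u_i$, joint convergence of the pair holds and Slutsky's theorem (applied coordinatewise) yields
\[\Big\{F_{ct}^{\lambda}(v^{\lambda}_i)\,n^{\wt(\rho_i)/2}\hat{\chi}^{\lambda}(\sigma_i)\Big\}_i\overset{d}\to\Big\{u_i\prod_{k\geq 2}k^{m_k(\rho_i)/2}\mathcal{H}_{m_k(\rho_i)}(\xi_k)\Big\}_i,\]
which is the claimed limit. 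It then remains to prove that $n^{\wt(\rho_i)/2}$ times the error sum tends to $0$ in probability for each $i$; adding a vector that vanishes in probability does not change the limit law, so the joint statement follows.

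To control the error I would bound it by the largest character increment. As the weights are nonnegative with $\sum_{y_j\leq v^{\lambda}_i\sqrt{n}}\dim\mu_j/\dim\lambda\leq 1$,
\[\Bigg|\sum_{y_j\leq v^{\lambda}_i\sqrt{n}}\frac{\dim\mu_j}{\dim\lambda}\big(\hat{\chi}^{\mu_j}(\sigma_i)-\hat{\chi}^{\lambda}(\sigma_i)\big)\Bigg|\leq \max_{\mu_j\nearrow\lambda}\big|\hat{\chi}^{\lambda}(\sigma_i)-\hat{\chi}^{\mu_j}(\sigma_i)\big|,\]
so it suffices to prove this maximum, over the at most $O(\sqrt{n})$ subpartitions of $\lambda$, lies in $o_P(n^{-\wt(\rho_i)/2})$. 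Here I would use the implication announced in the introduction and established in Section \ref{Asymptotic of the main term}: it reduces the increment $\hat{\chi}^{\lambda}(\sigma_i)-\hat{\chi}^{\mu_j}(\sigma_i)$ to the increment $p_{\nu_i}(\mathcal{C}_{\lambda})-p_{\nu_i}(\mathcal{C}_{\mu_j})$ of a power sum of contents, where $\nu_i$ is $\rho_i$ with one deleted from every part. Since $\mathcal{C}_{\lambda}=\mathcal{C}_{\mu_j}\sqcup\{y_j\}$, this increment expands multilinearly in the single added content $y_j$, of size $O(\sqrt{n})$, and a careful bookkeeping of the resulting orders against the weight filtration gives a bound uniform in $j$.

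The main obstacle is exactly this last uniform estimate. The power sums $p_{\nu_i}(\mathcal{C}_{\lambda})$ are themselves large, so the point is not their smallness but that, after the division by falling factorials implicit in passing from power sums of contents to characters, their single-box increments are of strictly smaller order than $n^{-\wt(\rho_i)/2}$, and that this holds simultaneously for all $O(\sqrt{n})$ corners $y_j$. Securing this requires the modified power sums $\tilde{p}_{\nu_i}$ and the filtration on the algebra of shifted symmetric functions, which is where the analysis of Section \ref{Asymptotic of the main term} carries the real weight; the remaining steps are then assembled by Slutsky's theorem and the weighted bound above.
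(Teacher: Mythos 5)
Your proposal is correct and follows essentially the same route as the paper: the paper likewise rewrites $MT_u^{\lambda}(\sigma)$ as $\bigl(\sum_{y_j\leq v^{\lambda}\sqrt{n}}\dim\mu_j/\dim\lambda\bigr)\hat{\chi}^{\lambda}_{\rho}$ plus an error controlled by $\sum_j\dim\mu_j/\dim\lambda\leq 1$ together with Proposition \ref{final corollary} (whose $o_P$ convention already encodes the uniformity over corners $\mu_j$ that you flag), and then concludes via Lemma \ref{behaviour of co-transition} and Theorem \ref{convergence of characters}. The heavy lifting is indeed deferred to the modified power sums of Section \ref{Asymptotic of the main term}, exactly as you indicate.
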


Notice that the procedure of Proposition \ref{first order decomposition of the partial trace} can be iterated: consider $\sigma\in S_r$ and $s\in \mathbb{N}$ such that $n-r>s$. Set $\mu^{(s)}:=\lambda$ and $\mu^{(s-1)}:=\bar{\mu}$, where $\bar{\mu}$ is defined according to Proposition \ref{first order decomposition of the partial trace}. Similarly set $u^{(s)}:=u$ and $u^{(s-1)}:=\bar{u}$, then we can rewrite \eqref{first decomposition of partial trace} as 
\[PT_u^{\lambda}(\sigma)=MT_{u^{(s)}}^{\mu^{(s)}}(\sigma)+\frac{\dim\mu^{(s-1)}}{\dim\lambda}PT_{u^{(s-1)}}^{\mu^{(s-1)}}(\sigma).\]
Applying again Proposition \ref{first order decomposition of the partial trace} to the second term of the right hand side, we see that there exists $\mu^{(s-2)}\nearrow\mu^{(s-1)}$ and $u^{(s-2)}\in [0,1]$ such that 
\[PT_u^{\lambda}(\sigma)=MT_{u^{(s)}}^{\mu^{(s)}}(\sigma)+\frac{\dim\mu^{(s-1)}}{\dim\lambda}MT_{u^{(s-1)}}^{\mu^{(s-1)}}(\sigma)+\frac{\dim\mu^{(s-2)}}{\dim\lambda}PT_{u^{(s-2)}}^{\mu^{(s-2)}}(\sigma).\]
By iterating we obtain the following proposition:

\begin{proposition}\label{decomposition of the partial trace}
 Let $\sigma\in S_r$. Set $s$ such that $n-s>r$, then there exists a sequence of partitions $\mu^{(0)}\nearrow\mu^{(1)}\nearrow\ldots\nearrow \mu^{(s)}=\lambda$ and a sequence of real numbers $0\leq u_{0},\ldots,u_s<1$ such that
 \[PT_u^{\lambda}(\sigma)=\sum_{i=1}^{s}\frac{\dim\mu^{(i)}}{\dim\lambda}MT_{u^{(i)}}^{\mu^{(i)}}(\sigma)+\frac{\dim\mu^{(0)}}{\dim\lambda}PT_{u^{(0)}}^{\mu^{(0)}}(\sigma).\]
\end{proposition}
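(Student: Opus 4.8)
The plan is to prove the statement by induction on the number of times Proposition~\ref{first order decomposition of the partial trace} is applied, since the displayed identity is precisely what one obtains by peeling off a single main term from the trailing remainder at each stage. I would formulate the induction hypothesis $P(m)$, for $0\le m\le s$, as the existence of partitions $\mu^{(s-m)}\nearrow\cdots\nearrow\mu^{(s)}=\lambda$ and reals $u^{(s-m)},\dots,u^{(s)}$ (each in $[0,1)$, except for $u^{(s)}=u$) such that
\[
PT_u^{\lambda}(\sigma)=\sum_{i=s-m+1}^{s}\frac{\dim\mu^{(i)}}{\dim\lambda}MT_{u^{(i)}}^{\mu^{(i)}}(\sigma)+\frac{\dim\mu^{(s-m)}}{\dim\lambda}PT_{u^{(s-m)}}^{\mu^{(s-m)}}(\sigma).
\]
The base case $P(0)$ is immediate: the sum is empty and, with $\mu^{(s)}=\lambda$ and $u^{(s)}=u$, the remaining term is just $PT_u^{\lambda}(\sigma)$. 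The target statement is exactly $P(s)$, whose summation index runs from $1$ to $s$ and whose remainder carries the coefficient $\dim\mu^{(0)}/\dim\lambda$.

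For the inductive step I would assume $P(m)$ with $m<s$ and apply Proposition~\ref{first order decomposition of the partial trace} to the trailing remainder $PT_{u^{(s-m)}}^{\mu^{(s-m)}}(\sigma)$. Here $\mu^{(s-m)}\vdash n-m$, so to invoke the proposition I must check that $\sigma\in S_r$ satisfies $r\le (n-m)-1$. This is where the hypothesis $n-s>r$ is used: since $m\le s-1$ we have $(n-m)-1\ge (n-(s-1))-1=n-s>r$, so the size condition holds at every stage, with room to spare. The proposition then supplies a subpartition, which I name $\mu^{(s-m-1)}\nearrow\mu^{(s-m)}$, together with $u^{(s-m-1)}:=\bar u\in[0,1)$, and yields
\[
PT_{u^{(s-m)}}^{\mu^{(s-m)}}(\sigma)=MT_{u^{(s-m)}}^{\mu^{(s-m)}}(\sigma)+\frac{\dim\mu^{(s-m-1)}}{\dim\mu^{(s-m)}}PT_{u^{(s-m-1)}}^{\mu^{(s-m-1)}}(\sigma).
\]

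Substituting this into $P(m)$ and using the telescoping cancellation
\[
\frac{\dim\mu^{(s-m)}}{\dim\lambda}\cdot\frac{\dim\mu^{(s-m-1)}}{\dim\mu^{(s-m)}}=\frac{\dim\mu^{(s-m-1)}}{\dim\lambda}
\]
absorbs the newly produced main term $MT_{u^{(s-m)}}^{\mu^{(s-m)}}(\sigma)$ into the sum (lowering its starting index from $s-m+1$ to $s-m$) and rewrites the new remainder with the correct coefficient $\dim\mu^{(s-m-1)}/\dim\lambda$. This is exactly $P(m+1)$. Iterating until $m=s$ delivers the claimed identity with the chain $\mu^{(0)}\nearrow\cdots\nearrow\mu^{(s)}=\lambda$ and the parameters $u^{(0)},\dots,u^{(s)}$.

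I expect no genuine difficulty here, as the argument is just the iteration already indicated in the paragraphs preceding the statement. The only points demanding care are (i) the index bookkeeping, so that the summation range and the superscripts line up correctly as the remainder is repeatedly split, and (ii) the verification that the size constraint $r\le(\text{size of the current partition})-1$ survives every step, which is guaranteed by $n-s>r$. The one piece of algebra worth writing out explicitly is the telescoping of the dimension ratios, since it is what keeps each coefficient in the final sum of the clean form $\dim\mu^{(i)}/\dim\lambda$ rather than an accumulating product of successive ratios $\prod_k \dim\mu^{(k-1)}/\dim\mu^{(k)}$.
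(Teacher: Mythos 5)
Your proof is correct and follows essentially the same route as the paper, which obtains this proposition precisely by iterating Proposition \ref{first order decomposition of the partial trace}; your induction, the size check via $n-s>r$, and the telescoping of the dimension ratios are exactly the bookkeeping the paper leaves implicit.
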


 \subsection{Partial permutations}
 In this section we recall some results in the theory of partial permutations, introduced in \cite{IvanovKerov1999}. All the definitions and results in this section can be found in \cite{feray2012partial}.
 \begin{defi}
  A \emph{partial permutation} is a pair $(\sigma,d)$, where $d\subset \mathbb{N}$ is finite and $\sigma$ is a bijection $d\to d$.
 \end{defi}
We call $P_n$ the set of partial permutations $(\sigma,d)$ such that $d\subseteq \{1,\ldots,n\}$. The set $P_n$ is endowed with the operation $(\sigma,d)\cdot(\sigma',d')=(\tilde{\sigma}\cdot\tilde{\sigma}',d\cup d')$, where $\tilde{\sigma}$ is the bijection from $d\cup d'$ to itself defined by $\tilde{\sigma}_{\vert d}=\sigma$ and $\tilde{\sigma}_{\vert d'\setminus d}=\id$; the same holds for $\tilde{\sigma}$.
\smallskip

Define the algebra $\mathcal{B}_n=\mathbb{C}[P_n]$. There is an action of the symmetric group $S_n$ on $P_n$ defined by $\tau\cdot (\sigma,d):=(\tau\sigma\tau^{-1},\tau(d))$, and we call $\mathcal{A}_n$ the abelian subalgebra of $\mathcal{B}_n$ of the invariant elements under this action. Set 
\[\alpha_{\rho;n}:=\sum\limits_{\substack{d\subseteq\{1,\ldots,n\},\rho\vdash|d|\\ \sigma\in S_d\mbox{ of type } \rho}}(\sigma,d).\]
\begin{proposition}
 The family $(\alpha_{\rho;n})_{|\rho|\leq n}$ forms a linear basis for $\mathcal{A}_n$.
\end{proposition}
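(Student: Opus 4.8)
The plan is to recognise $\mathcal{A}_n$ as the space of invariants of a permutation representation and to identify the $\alpha_{\rho;n}$ with the corresponding orbit sums. By definition the partial permutations $(\sigma,d)\in P_n$ form a linear basis of $\mathcal{B}_n=\mathbb{C}[P_n]$, and the $S_n$-action $\tau\cdot(\sigma,d)=(\tau\sigma\tau^{-1},\tau(d))$ simply permutes this basis. For any such permutation representation the invariant subspace is spanned by the orbit sums: writing a general element $v=\sum_{(\sigma,d)}c_{(\sigma,d)}(\sigma,d)$, invariance under $S_n$ forces the coefficients $c_{(\sigma,d)}$ to be constant along each orbit, so $v$ lies in the span of the sums $\sum_{(\sigma,d)\in O}(\sigma,d)$ taken over orbits $O$; conversely each such orbit sum is manifestly invariant. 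Since distinct orbits involve disjoint sets of basis vectors, these orbit sums are linearly independent, hence a basis of $\mathcal{A}_n$.

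It therefore remains to classify the orbits and to check that the orbit sums are exactly the $\alpha_{\rho;n}$. My claim is that two partial permutations $(\sigma,d)$ and $(\sigma',d')$ lie in the same $S_n$-orbit if and only if $|d|=|d'|$ and $\sigma,\sigma'$ have the same cycle type as permutations of $d$ and $d'$ respectively. The forward implication is immediate, since $\tau(d)$ has the same cardinality as $d$ and conjugation preserves cycle type. For the converse, the hypothesis that $\sigma$ and $\sigma'$ share a cycle type $\rho\vdash|d|=|d'|$ yields a bijection $\tau_0\colon d\to d'$ with $\tau_0\sigma\tau_0^{-1}=\sigma'$, obtained by matching up the cycles of $\sigma$ and $\sigma'$ of equal length.

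The one point requiring care is the passage from $\tau_0$ to a genuine element of $S_n$: because $|d|=|d'|$, the complements $\{1,\dots,n\}\setminus d$ and $\{1,\dots,n\}\setminus d'$ have equal size $n-|d|$, so $\tau_0$ extends to a permutation $\tau\in S_n$ by choosing any bijection between these complements. This $\tau$ satisfies $\tau(d)=d'$ and $\tau\sigma\tau^{-1}=\sigma'$, establishing the converse. Consequently the orbits are indexed precisely by partitions $\rho$ with $|\rho|\leq n$, the size $|\rho|$ being the common cardinality $|d|$ forced by $\rho\vdash|d|$; and the orbit attached to $\rho$ is exactly the index set $\{(\sigma,d):d\subseteq\{1,\dots,n\},\ \rho\vdash|d|,\ \sigma\in S_d\text{ of type }\rho\}$ appearing in the definition of $\alpha_{\rho;n}$. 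Thus each $\alpha_{\rho;n}$ is the orbit sum attached to $\rho$, and by the first paragraph the family $(\alpha_{\rho;n})_{|\rho|\leq n}$ is a basis of $\mathcal{A}_n$. I expect the orbit classification — specifically the controlled extension of the conjugating bijection $\tau_0$ to all of $\{1,\dots,n\}$ — to be the only step needing genuine verification; everything else is the standard description of invariants of a permutation module.
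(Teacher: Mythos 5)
Your proof is correct: $\mathcal{A}_n$ is by definition the space of invariants of a permutation representation of $S_n$ on the basis $P_n$ of $\mathcal{B}_n$, so it is spanned by the (linearly independent) orbit sums, and your classification of the orbits by the cycle type $\rho\vdash|d|$ — including the extension of the conjugating bijection $\tau_0$ to all of $\{1,\dots,n\}$ using $|d|=|d'|$ — identifies these orbit sums with the $\alpha_{\rho;n}$. The paper itself gives no proof, quoting the result from the partial-permutation literature (Ivanov--Kerov, F\'eray), where the argument is exactly the one you give, so there is nothing to add.
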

There is a natural projection $\mathcal{B}_{n+1}\to\mathcal{B}_n$ which sends to $0$ the partial permutations whose support contains $n+1$. The projective limit through this projection is called $\mathcal{B}_{\infty}:=\lim\limits_{\leftarrow}\mathcal{B}_n$, and similarly $\mathcal{A}_{\infty}:=\lim\limits_{\leftarrow}\mathcal{A}_n$. The family $\alpha_{\rho}:=(\alpha_{\rho;n})_{n\geq 1}$ forms a linear basis of $\mathcal{A}_{\infty}$.
\smallskip

We recall the definition of \emph{Jucys-Murphy element}, described for example in \cite{Jucys1966} and \cite{Murphy1981}, and its generalization as a partial permutation. 
\begin{defi}
The $i$-th \emph{Jucys-Murphy element} is the element of the group algebra $\mathbb{C}[S_n]$ defined by $J_i:=(1,i)+(2,i)+\ldots+(i-1,i)$, $J_1=0$.
\smallskip

 The \emph{partial Jucys-Murphy element }$\xi_i$ is the element of $\mathcal{B}_n$ defined by $\xi_i:=\sum\limits_{j<i}((j,i),\{j,i\}),$ $\xi_1:=0.$
\end{defi}
\begin{proposition}
If $f$ is a symmetric function, $f(\xi_1,\ldots, \xi_n)\in\mathcal{A}_n$. The sequence $f_n=f(\xi_1,\ldots,\xi_n)$ is an element of the projective limit $\mathcal{A}_{\infty}$, that we denote $\Gamma_f$; moreover, $f\mapsto \Gamma_f$ is an algebra morphism. We  define $\Xi$ as the projective limit of the sequence $(\xi_1,\ldots,\xi_n)$ for $n\to\infty$; equivalently, $\Xi$ is the unique element of $\bigcup_{n=1}^{\infty}\mathbb{C}[S_n]$ such that $f(\Xi):=\Gamma_f$.
\end{proposition}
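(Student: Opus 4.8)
The plan is to treat the four assertions in turn, concentrating the real work in the invariance claim, which I would reduce to a Jucys-type identity, and deducing the projective-limit and morphism statements formally. As a preliminary I would record that $\xi_1,\ldots,\xi_n$ commute pairwise in $\mathcal{B}_n$, since this is what makes $f(\xi_1,\ldots,\xi_n)$ unambiguous for a symmetric $f$ and makes evaluation a ring map. I would prove commutativity by the same commutator cancellation used for the classical elements $J_i$: for $i\neq k$, expanding $[\xi_i,\xi_k]$ leaves only products of partial transpositions sharing exactly one point, and these cancel in pairs exactly as in $\mathbb{C}[S_n]$, the support bookkeeping being harmless because $(\sigma,d)(\sigma',d')$ carries the union $d\cup d'$ regardless of the order of the two factors.

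The core step is the membership $f(\xi_1,\ldots,\xi_n)\in\mathcal{A}_n$. Since $\mathcal{A}_n$ is a subalgebra and the elementary symmetric functions $e_k$ generate the ring $\Lambda$ of symmetric functions, it suffices to treat the $e_k$, and for these I would establish the partial analogue of Jucys' identity
\[
\prod_{i=1}^{n}\bigl(1+t\,\xi_i\bigr)=\sum_{\substack{d\subseteq\{1,\ldots,n\}\\ \sigma\in S_d\text{ fixed-point-free}}} t^{\,|d|-\kappa(\sigma)}\,(\sigma,d),
\]
where $\kappa(\sigma)$ denotes the number of cycles of $\sigma$ acting on $d$. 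I would prove this by induction on $n$, multiplying the identity for $n-1$ by $(1+t\xi_n)$ and checking that attaching a partial transposition $((j,n),\{j,n\})$ produces exactly the fixed-point-free partial permutations whose support meets $\{n\}$, with the correct power of $t$; the only new feature relative to the group-algebra computation is that the support grows by $\{j,n\}$, which is precisely what is needed, and an element left untouched simply fails to enter $d$ rather than becoming a fixed point. Extracting the coefficient of $t^k$ then gives $e_k(\xi_1,\ldots,\xi_n)=\sum_{\rho}\alpha_{\rho;n}$, the sum running over partitions $\rho$ with all parts at least $2$ and $|\rho|-\ell(\rho)=k$. As each $\alpha_{\rho;n}$ is invariant under the conjugation action by construction, this shows $e_k(\xi)\in\mathcal{A}_n$, and hence $f(\xi)\in\mathcal{A}_n$ for every symmetric $f$.

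It remains to pass to the limit and identify the algebraic structure. The projection $\mathcal{B}_{n+1}\to\mathcal{B}_n$ sends $\alpha_{\rho;n+1}\mapsto\alpha_{\rho;n}$, since it deletes precisely the partial permutations whose support contains $n+1$; the explicit expansion above then gives $e_k(\xi_1,\ldots,\xi_{n+1})\mapsto e_k(\xi_1,\ldots,\xi_n)$. Equivalently, one argues directly that $f(\xi_1,\ldots,\xi_{n+1})-f(\xi_1,\ldots,\xi_n)$ consists only of monomials containing a factor $\xi_{n+1}$, each of which has $n+1$ in its support and is therefore annihilated, using the stability $f(x_1,\ldots,x_n,0)=f(x_1,\ldots,x_n)$ of symmetric functions. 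Thus $(f_n)_n$ is a compatible sequence defining $\Gamma_f\in\mathcal{A}_\infty$. Because the $\xi_i$ commute, evaluation $f\mapsto f(\xi_1,\ldots,\xi_n)$ is an algebra morphism from $\Lambda$ to $\mathcal{A}_n$ for each $n$, and inverse limits preserve this, so $f\mapsto\Gamma_f$ is an algebra morphism. Finally, $\Xi$ is defined as the compatible family $(\xi_i)_{i\geq 1}$ obtained from the same projective limit of the tuples $(\xi_1,\ldots,\xi_n)$; the identity $f(\Xi):=\Gamma_f$ holds by construction, and uniqueness is immediate since the family is determined coordinatewise by the limit.

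The main obstacle is the invariance step, and with the $e_k$ route the difficulty is concentrated in the partial Jucys identity: one must verify that the inductive attachment of $\xi_n$ matches fixed-point-free partial permutations with the correct exponent of $t$, and that no spurious fixed points or extraneous support elements are introduced. The commutativity of the $\xi_i$ is a necessary but routine preliminary, and once invariance is in hand the projective-limit and morphism assertions, together with the definition of $\Xi$, follow formally.
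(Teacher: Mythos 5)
The paper does not actually prove this proposition: it is quoted as background, with the remark that ``all the definitions and results in this section can be found in \cite{feray2012partial}''. So the only thing to assess is whether your argument stands on its own, and it does. The commutativity of the $\xi_i$ via pairwise cancellation of $[(j,i),(i,k)]$ against $[(j,i),(j,k)]$ carries over to $\mathcal{B}_n$ precisely because both commutators involve partial permutations with the same support $\{j,i,k\}$, as you say. The partial Jucys product identity is correct: the inductive step works because multiplying a fixed-point-free $(\sigma,d)$ with $d\subseteq\{1,\ldots,n-1\}$ by $((j,n),\{j,n\})$ either inserts $n$ into an existing cycle (support grows by one, cycle count unchanged) or creates a new $2$-cycle (support grows by two, cycle count grows by one), so $|d|-\kappa(\sigma)$ increases by exactly one in both cases, and uniqueness of the stair factorization gives each fixed-point-free partial permutation exactly once. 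From there $e_k(\xi)=\sum\alpha_{\rho;n}$ over $\rho$ with $m_1(\rho)=0$ and $|\rho|-\ell(\rho)=k$, membership in $\mathcal{A}_n$ follows since the $e_k$ generate $\Lambda$ and $\mathcal{A}_n$ is a subalgebra, and your direct argument for projective compatibility (monomials of $f(\xi_1,\ldots,\xi_{n+1})-f(\xi_1,\ldots,\xi_n,0)$ all contain a $\xi_{n+1}$ factor, hence only partial permutations with $n+1$ in their support, all killed by the projection) is the cleanest way to see it. One comparison worth noting: the source the paper relies on, and the paper itself in Lemma \ref{power sums on partial jucys murphy}, work with power sums $p_\nu(\Xi)$ and extract the expansion in the $\alpha_\rho$ basis by a direct combinatorial analysis of words $(j_1,h)\cdots(j_k,h)$; your route through the elementary symmetric functions and the product formula $\prod_i(1+t\xi_i)$ buys a closed-form expansion with all coefficients equal to $1$ and makes the invariance and stability statements immediate, at the cost of not directly yielding the finer top-degree information (such as the coefficient $\prod_i m_i(\nu)!$ of $\alpha_{\nu+\underline{1}}$) that the paper later needs from the power-sum expansion.
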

For a partition $\nu=(\nu_1,\ldots,\nu_q)$ and $n\geq |\nu|$ we consider \hbox{$p_{\nu}(\xi_1,\ldots,\xi_n)=\prod_{i=1}^q(\xi_1^{\nu_i}+\ldots +\xi_n^{\nu_i})$}. Then F\'eray proved in \cite[Proposition 2.5 and proof]{feray2012partial} that
\[p_{\nu}(\Xi)=\prod_i m_i(\nu)!\hspace{0.1cm}\alpha_{\nu+\underline{1}}+\sum\limits_{|\rho|<|\nu|+q}c_{\rho}\alpha_{\rho},\]
with $\nu+\underline{1}=(\nu_1+1,\ldots,\nu_q+1)$ and $c_{\rho}$ non-negative integers.

Let us fix some notation: we write $n^{\downarrow k}$ for the $k$-th falling factorial, that is, $n(n-1)\cdot\ldots\cdot(n-k+1)$, and $z_{\rho}=\prod_i \rho_i\prod_i m_i(\rho)!$ for the order of the centralizer of a permutation of type $\rho$. We call 
\[K_{\rho1^{n-|\rho|}}=\frac{1}{\#\{\sigma \mbox{ of cycle type }\rho1^{n-|\rho|}\}}\sum\sigma\in\C[S_n],\]
where the sum runs over the permutations $\sigma\in S_n$ of cycle type $\rho1^{n-|\rho|}$. We consider the morphism of algebras $\varphi_n\colon\mathcal{A}_n\to Z(\mathbb{C}[S_n])$ which sends $(\sigma,d)$ to $\sigma$. On the basis $(\alpha_{\rho;n})$ it acts as follows:
\begin{equation}\label{act of phi}
 \varphi_n(\alpha_{\rho;n})= \frac{n^{\downarrow|\rho|}}{z_{\rho}}K_{\rho1^{n-|\rho|}}.
\end{equation}
In particular $\hat{\chi}^{\lambda}(\varphi_n(\alpha_{\nu;n}))=\frac{1}{z_{\nu}}|\lambda|^{\downarrow |\nu|}\hat{\chi}^{\lambda}_{\nu}$. It follows from a result of Jucys\hbox{\cite[Equation (12)]{Jucys1974}} that
\begin{equation}\label{ciao}
 \hat{\chi}^{\lambda}(\varphi_n(f(\xi_1,\ldots,\xi_n)))=f(\mathcal{C}_{\lambda}),
\end{equation}
 where $f$ is a symmetric function and $\mathcal{C}_{\lambda}$ is the multiset of contents of the diagram $\lambda$, $\mathcal{C}_{\lambda}=\{c(\Box),\Box\in\lambda\}$.
 
 \subsection{Shifted symmetric functions}
 We recall some results on the algebra of shifted symmetric functions $\Lambda^{\ast}$ and their relations with $\mathcal{A}_{\infty}$. The results and definitions in this section follow \cite{IvanovKerov1999}.
 \begin{defi}
  Let $\Lambda^{\ast}(n)$ be the algebra of polynomials with complex coefficients in $x_1,\ldots,x_n$ that become symmetric in the new variables $x_i'=x_i-i$. There is a natural projection $\Lambda^{\ast}(n+1)\to \Lambda^{\ast}(n)$ which sends $x_{n+1}$ to $0$. The \emph{algebra of shifted symmetric functions} $\Lambda^{\ast}$ is defined as the projective limit of $\Lambda^{\ast}(n)$ according to this projection.
 \end{defi}
 We can apply (shifted) symmetric functions to partitions in the following way: if $f$ is a (shifted) symmetric function and $\lambda=(\lambda_1,\lambda_2,\ldots,\lambda_l)$ a partition, then we set $f(\lambda)=f(\lambda_1,\lambda_2,\ldots, \lambda_l)$.
 \smallskip
 
 As in the symmetric functions algebra, there are several interesting bases for the algebra of shifted symmetric functions. We present one of them:
 \begin{proposition}
There exists a family of shifted symmetric functions $\{p_{\rho}^{\sharp}\}_{\rho\vdash r}$ such that, for each $\lambda\vdash n$,
  \[p_{\rho}^{\sharp}(\lambda)=\left\{\begin{array}{lr}
                      n^{\downarrow r}\hat{\chi}_{\rho 1^{n-r}}^{\lambda},&\mbox{if }n\geq r;\\
                      0,&\mbox{otherwise}.
                     \end{array}\right.\]
Moreover, the family $\{p_{\rho}^{\sharp}\}$, where $\rho$ runs over all partitions, forms a basis for $\Lambda^{\ast}$.
 \end{proposition}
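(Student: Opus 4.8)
The plan is to realise $p_\rho^\sharp$ as the image of the basis element $\alpha_\rho$ of $\mathcal{A}_\infty$ under the character pairing, and then to transport the known structure of $\mathcal{A}_\infty$ to $\Lambda^\ast$. Using the convention $\hat\chi^\lambda_\rho=\hat\chi^\lambda_{\rho1^{n-r}}$ together with the identity $\hat\chi^\lambda(\varphi_n(\alpha_{\rho;n}))=\frac1{z_\rho}n^{\downarrow|\rho|}\hat\chi^\lambda_\rho$, one sees that the function to be constructed is
\[
p_\rho^\sharp(\lambda)=z_\rho\,\hat\chi^\lambda\bigl(\varphi_n(\alpha_{\rho;n})\bigr)\qquad(n\geq r),
\]
and $0$ otherwise; in particular the evaluation formula in the statement holds by definition. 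It therefore remains to prove two things: that this $\lambda$-function lies in $\Lambda^\ast$, and that the family $\{p_\rho^\sharp\}$ is a basis.

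For shifted symmetry I would pass from the $\alpha$-basis to symmetric functions of the Jucys--Murphy elements. F\'eray's expansion $p_\nu(\Xi)=\prod_i m_i(\nu)!\,\alpha_{\nu+\underline1}+\sum_{|\rho|<|\nu|+q}c_\rho\alpha_\rho$ is triangular for the ordering by $|\rho|$, so it can be inverted to write every $\alpha_\rho$ with all parts $\geq2$ as a polynomial in the $p_\nu(\Xi)$; the remaining coordinate directions, coming from parts equal to $1$, are supplied by $\alpha_{(1);n}$, whose image under the pairing is the size function $\lambda\mapsto|\lambda|$. Applying $\hat\chi^\lambda(\varphi_n(f(\xi_1,\dots,\xi_n)))=f(\mathcal C_\lambda)$ then expresses $p_\rho^\sharp(\lambda)$ as a polynomial in the content power sums $p_k(\mathcal C_\lambda)=\sum_{\Box\in\lambda}c(\Box)^k$ and in $|\lambda|$. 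Finally I would check directly that each $p_k(\mathcal C_\lambda)$ is shifted symmetric: writing the row sums $\sum_{j=1}^{\lambda_i}(j-i)^k$ in the shifted coordinates $x_i=\lambda_i-i$ and padding $\lambda$ with empty rows, one obtains a symmetric polynomial in the $x_i$ up to a boundary term that is reabsorbed by the change in the range of summation, so the whole expression is stable and symmetric in the $x_i$. Since $|\lambda|=\sum_i(x_i+i)$ is manifestly shifted symmetric, this gives $p_\rho^\sharp\in\Lambda^\ast$.

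For the basis statement I would run a triangularity argument against the content power sums. Inverting F\'eray's formula as above shows that, in the filtration of $\Lambda^\ast$ in which $p_k(\mathcal C_\lambda)$ has degree $k+1$ and $|\lambda|$ has degree $1$, the top-degree part of $p_\rho^\sharp$ is a nonzero scalar multiple of the monomial $\prod_i p_{\rho_i-1}(\mathcal C_\lambda)$, where by convention $p_0(\mathcal C_\lambda):=|\lambda|$; thus $p_\rho^\sharp$ has degree $|\rho|$ and distinct partitions $\rho$ produce distinct leading monomials. Moreover the leading term of $p_k(\mathcal C_\lambda)$ in the shifted coordinates is proportional to the ordinary power sum $\sum_i x_i^{\,k+1}$, so the content power sums $p_k(\mathcal C_\lambda)$, $k\geq0$, are algebraically independent and their top parts freely generate $\operatorname{gr}\Lambda^\ast$; a Hilbert-series count then confirms that these monomials form a basis of each graded piece (matching the partition generating function $\prod_{k\geq1}(1-t^k)^{-1}$). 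Triangularity lifts linear independence and spanning from $\operatorname{gr}\Lambda^\ast$ to $\Lambda^\ast$, and since the number of $\rho$ with $|\rho|=d$ equals the dimension of the degree-$d$ piece, $\{p_\rho^\sharp\}$ is a basis.

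The main technical obstacle is the bookkeeping that makes this triangularity rigorous: one must match the weight filtration on $\mathcal A_\infty$ with the content filtration on $\Lambda^\ast$, handle the parts equal to $1$ (which are not produced by F\'eray's expansion and must be fed in through the size function via products $\alpha_{\rho^{\geq2}}\cdot\alpha_{(1)}^{m_1(\rho)}$), and verify that for every $\rho$ the leading term of $p_\rho^\sharp$ is the expected content monomial up to a nonzero scalar. The shifted symmetry of the individual content sums requires only the small regularisation argument indicated above, and the evaluation formula is immediate from the definition; it is the precise top-degree identification across all $\rho$ that carries the weight of the proof.
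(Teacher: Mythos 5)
The paper does not prove this proposition: it is stated as a known result and attributed to Okounkov and Olshanski (\cite[Section 15]{OkOl1998}), with the companion fact that $F\colon\mathcal{A}_{\infty}\to\Lambda^{\ast}$, $\alpha_{\rho}\mapsto p_{\rho}^{\sharp}/z_{\rho}$, is an isomorphism likewise imported from the literature (Lemma \ref{lemma isomorphism}). So there is no in-paper argument to compare against; what you have written is a reconstruction of the standard proof, and as a plan it is sound and uses exactly the toolkit the paper itself assembles: the evaluation formula is indeed immediate from $\hat{\chi}^{\lambda}(\varphi_n(\alpha_{\rho;n}))=\frac{1}{z_{\rho}}n^{\downarrow|\rho|}\hat{\chi}^{\lambda}_{\rho}$ together with $\alpha_{\rho;n}=0$ and $n^{\downarrow|\rho|}=0$ for $n<|\rho|$; the shifted symmetry of $\lambda\mapsto\sum_{\Box\in\lambda}c(\Box)^k$ via the telescoping $\sum_i\bigl(\Phi_k(\lambda_i-i)-\Phi_k(-i)\bigr)$ is correct and stable under padding; and the triangularity against ordinary power sums in the associated graded algebra, combined with the dimension count, is the standard way to get the basis statement.

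Two places in your sketch carry more weight than you give them. First, the inversion step: F\'eray's expansion $p_{\nu}(\Xi)=\prod_i m_i(\nu)!\,\alpha_{\nu+\underline{1}}+\sum_{|\rho|<|\nu|+q}c_{\rho}\alpha_{\rho}$ is triangular only onto the $\alpha_{\rho}$ with all parts at least $2$, while its correction terms already involve partitions with parts equal to $1$; so ``invert by $|\rho|$'' does not close on its own. To run the joint induction you need that $\alpha_{\rho}\cdot\alpha_{\theta}=\alpha_{\rho\cup\theta}+(\mbox{terms }\alpha_{\varsigma}\mbox{ with }|\varsigma|<|\rho|+|\theta|)$, which follows directly from the support-union description of the product in $\mathcal{B}_n$ (overlapping supports strictly decrease the size) rather than from the Kerov-filtration statement of Corollary \ref{corollary on alpha}, whose equal-degree correction space $\tilde{V}^{=}_k$ would not obviously terminate the induction. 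With that size-graded product formula, every $\alpha_{\varsigma}$ becomes a polynomial in $\{p_k(\Xi)\}_{k\geq1}$ and $\alpha_{(1)}$ by induction on $|\varsigma|$, and applying the algebra morphism $\hat{\chi}^{\lambda}\circ\varphi_n$ (note this composite is multiplicative because central elements act by scalars) lands you in polynomials in the content power sums and $|\lambda|$, hence in $\Lambda^{\ast}$. Second, the identification of the top-degree part of $p_{\rho}^{\sharp}$ as a nonzero multiple of $\prod_i p_{\rho_i-1}(\mathcal{C}_{\lambda})$ is asserted rather than derived; it does follow by tracking leading terms through the same two triangular expansions (using that $\Phi_k(x)=x^{k+1}/(k+1)+\cdots$), but that bookkeeping is the actual content of the basis claim and should be written out. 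Neither point is a gap in the sense of a wrong idea; they are the parts of the argument that Okounkov--Olshanski and Ivanov--Kerov carry out in detail and that your sketch defers.
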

 These functions are called \emph{shifted power sums}; they were introduced in \cite[Section 15]{OkOl1998}.
 \smallskip
 
 In order to simplify the notation we write $p_k^{\sharp}$, where $k$ is a positive integer, instead of $p_{(k)}^{\sharp}$. Similarly, $p_{1^k}^{\sharp}$ replaces $p_{(1^k)}^{\sharp}$. As polynomials in the variables $x_1,\ldots,x_n,\ldots$ the function $p_{\rho}^{\sharp}$ have degree \hbox{$\deg p_{\rho}^{\sharp}(x_1,\ldots,x_n,\ldots)=|\rho|$}. We are interested in two filtrations of the algebra $\Lambda^{\ast}$, which are described in\hbox{\cite[section 10]{IvanovKerov1999}}:
\begin{itemize}
 \item $\deg(p_{\rho}^{\sharp})_1:=|\rho|_1:=|\rho|+m_1(\rho)$. This filtration is usually referred as the \emph{Kerov filtration};
 \item $\deg(p_{\rho}^{\sharp})_{\mathbb{N}}:=|\rho|_{\mathbb{N}}=|\rho|+l(\rho)$, where $l(\rho)=\sum_{i\in\mathbb{N}}m_i(\rho)$ is the number of parts of $\rho$.
\end{itemize}
 \begin{lemma}\label{lemma isomorphism}
  There is an isomorphism $F:\mathcal{A}_{\infty}\to\Lambda^{\ast}$ that sends $\alpha_{\rho}$ to $p_{\rho}^{\sharp}/z_{\rho}$.
 \end{lemma}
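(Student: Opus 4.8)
The plan is to observe first that the stated map carries essentially no linear content: defining $F$ by linear extension of $\alpha_\rho\mapsto p_\rho^{\sharp}/z_\rho$ sends the basis $\{\alpha_\rho\}$ of $\mathcal{A}_{\infty}$ to the family $\{p_\rho^{\sharp}/z_\rho\}$, which is again a basis of $\Lambda^{\ast}$ since every $z_\rho\neq 0$. Hence $F$ is automatically a linear isomorphism, and the whole weight of the statement lies in showing that $F$ is multiplicative. That is the step I would build the argument around.

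The key device is a family of \emph{evaluation homomorphisms}. For each $\lambda\vdash n$ I would consider the composite
\[
\mathrm{ev}_\lambda\colon \mathcal{A}_{\infty}\longrightarrow \mathcal{A}_n \xrightarrow{\ \varphi_n\ } Z(\mathbb{C}[S_n]) \xrightarrow{\ \hat{\chi}^{\lambda}\ } \mathbb{C},
\]
where the first arrow is the projection out of the projective limit. I claim $\mathrm{ev}_\lambda$ is an algebra morphism: the projection and $\varphi_n$ are algebra morphisms, and on the center $Z(\mathbb{C}[S_n])$ Schur's lemma forces $\pi^{\lambda}$ to act by scalars, so $\hat{\chi}^{\lambda}$ restricted to the center is exactly the central character and is therefore multiplicative. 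Combining the two displayed identities already recorded in the excerpt, namely $\hat{\chi}^{\lambda}(\varphi_n(\alpha_{\rho;n}))=\frac{1}{z_\rho}|\lambda|^{\downarrow|\rho|}\hat{\chi}^{\lambda}_\rho$ and $p_\rho^{\sharp}(\lambda)=n^{\downarrow|\rho|}\hat{\chi}^{\lambda}_\rho$, gives the bridge formula
\[
\mathrm{ev}_\lambda(\alpha_\rho)=\frac{p_\rho^{\sharp}(\lambda)}{z_\rho}=\bigl(F(\alpha_\rho)\bigr)(\lambda),
\]
whence by linearity $\mathrm{ev}_\lambda(a)=\bigl(F(a)\bigr)(\lambda)$ for every $a\in\mathcal{A}_{\infty}$ and every partition $\lambda$.

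With this, multiplicativity of $F$ transfers from that of the $\mathrm{ev}_\lambda$: for $a,b\in\mathcal{A}_{\infty}$ and any $\lambda$,
\[
\bigl(F(ab)\bigr)(\lambda)=\mathrm{ev}_\lambda(ab)=\mathrm{ev}_\lambda(a)\,\mathrm{ev}_\lambda(b)=\bigl(F(a)\bigr)(\lambda)\,\bigl(F(b)\bigr)(\lambda)=\bigl(F(a)F(b)\bigr)(\lambda).
\]
Thus $F(ab)$ and $F(a)F(b)$ are shifted symmetric functions agreeing on every Young diagram. Invoking the separation property that a shifted symmetric function is determined by its values on the set of all partitions (the evaluation map from $\Lambda^{\ast}$ into functions on partitions is injective, cf.\ \cite{IvanovKerov1999,OkOl1998}), I conclude $F(ab)=F(a)F(b)$, so $F$ is an algebra morphism and hence an algebra isomorphism.

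The step I expect to demand the most care is the multiplicativity package underlying $\mathrm{ev}_\lambda$: one must check that $\mathrm{ev}_\lambda$ is well defined on the projective limit and factors cleanly through the fixed level $n=|\lambda|$, and that $\hat{\chi}^{\lambda}$ is genuinely an algebra homomorphism on $Z(\mathbb{C}[S_n])$ rather than merely linear. Paired with the separation property for $\Lambda^{\ast}$, these two standard-but-load-bearing facts are the crux; once they are in place the isomorphism follows formally.
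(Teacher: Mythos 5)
Your proof is correct, and in fact the paper offers no proof of this lemma at all --- it is imported verbatim from \cite{IvanovKerov1999}, where the argument is essentially the one you give: both algebras are compared through their evaluations on Young diagrams, using that $\hat{\chi}^{\lambda}\circ\varphi_n$ is a character of the commutative algebra $\mathcal{A}_n$ and that shifted symmetric functions are separated by their values on partitions. All the load-bearing ingredients you flag (multiplicativity of $\hat{\chi}^{\lambda}$ on $Z(\mathbb{C}[S_n])$ via Schur's lemma, compatibility with the projective limit, and injectivity of the evaluation map on $\Lambda^{\ast}$) are standard and correctly deployed, so nothing is missing.
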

Our goal in this section is to develop \cite[Proposition 4.12]{ivanov2002kerov}, which gives information about the top degree term of $p_{\rho}^{\sharp}\cdot p_{\theta}^{\sharp}$ for partitions $\rho$ and $\theta$, where top degree refers to the Kerov filtration. Consequently, we obtain results on the product $\alpha_{\rho}\cdot\alpha_{\theta}$ (Corollary \ref{corollary on alpha}). 
\smallskip

We will write $V^{<}_k$ for the vector space whose basis is $\{ p^{\sharp}_{\rho}$ such that $|\rho|_1<k\}$. Notice that since $\deg(\cdot)_1$ is a filtration, 
 \begin{equation}\label{property of filtration}
p_{\rho}^{\sharp}\cdot V^{<}_k\subseteq V^{<}_{|\rho|_1+k}.  
\end{equation}
By abuse of notation we will often write $p_{\rho}^{\sharp}+V^{<}_k$ to indicate $p_{\rho}^{\sharp}$ plus a linear combination of elements with $\deg(\cdot)_1<k$. We recall a result of Ivanov and Olshanski \cite[Proposition 4.12]{ivanov2002kerov}:
\begin{lemma}\label{prop 4.12 of IO}
 For any partition $\rho$ and $k\geq 2$,
 \begin{equation}
  p_{\rho}^{\sharp}p_{k}^{\sharp}=p_{\rho\cup k}^{\sharp}+\left\{\begin{array}{lr}
                                                                  k\cdot m_k(\rho)\cdot p_{(\rho\setminus k)\cup 1^k}^{\sharp}+V^{<}_{|\rho|_1+k},&\mbox{if }m_k(\rho)\geq 1\\
                                                                  V^{<}_{|\rho|_1+k},&\mbox{if }m_k(\rho)=0,
                                                                 \end{array}\right.
 \end{equation}
where the partition $\rho\setminus k$ is obtained by removing one part equal to $k$.
\end{lemma}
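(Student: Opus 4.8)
The plan is to transport the identity to the partial-permutation algebra $\mathcal{A}_{\infty}$ via the algebra isomorphism $F$ of Lemma \ref{lemma isomorphism}, where products are governed by the transparent combinatorics of multiplication of partial permutations, and then pull the answer back. Since $F(\alpha_{\gamma})=p_{\gamma}^{\sharp}/z_{\gamma}$ and $F$ is multiplicative, writing the product in $\mathcal{A}_{\infty}$ as $\alpha_{\rho}\alpha_{k}=\sum_{\gamma}C_{\rho,k}^{\gamma}\,\alpha_{\gamma}$ gives
\[p_{\rho}^{\sharp}p_{k}^{\sharp}=z_{\rho}z_{k}\,F(\alpha_{\rho}\alpha_{k})=\sum_{\gamma}\frac{z_{\rho}z_{k}}{z_{\gamma}}\,C_{\rho,k}^{\gamma}\,p_{\gamma}^{\sharp},\]
so the whole problem reduces to computing the structure constants $C_{\rho,k}^{\gamma}$ for those $\gamma$ of top Kerov degree and then correcting by the ratios $z_{\rho}z_{k}/z_{\gamma}$. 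Note that $p_{k}^{\sharp}$ has Kerov degree $k$ (as $k\ge 2$ forces $|k|_{1}=k$), so by the filtration property \eqref{property of filtration} only partitions $\gamma$ with $|\gamma|_{1}\le |\rho|_{1}+k$ can occur, and the leading part sits in degree exactly $|\rho|_{1}+k$.

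Next I would expand $\alpha_{\rho}\alpha_{k}=\sum(\tilde{\sigma}\tilde{\tau},\,d\cup e)$ over partial permutations $(\sigma,d)$ of type $\rho$ and $(\tau,e)$ with $\tau$ a $k$-cycle, and organise the sum by the overlap $j:=|d\cap e|\in\{0,\dots,k\}$. The support of a summand is $d\cup e$, so the resulting partition $\gamma$ has $|\gamma|=|\rho|+k-j$. A short check shows that no point of $e\setminus d$ can be fixed by $\tilde{\sigma}\tilde{\tau}$, so every fixed point of the product lies in $d$; splitting $d=(d\setminus e)\sqcup(d\cap e)$ then yields $m_{1}(\gamma)\le m_{1}(\rho)+j$, hence $|\gamma|_{1}=|\gamma|+m_{1}(\gamma)\le|\rho|_{1}+k$, in agreement with the a priori bound above. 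Top Kerov degree is attained only when $m_{1}(\gamma)=m_{1}(\rho)+j$, which forces every point of $d\cap e$ to be fixed by the product. Since a fixed point $x\in d\cap e$ satisfies $\tilde{\sigma}(\tau(x))=x$, its image $\tau(x)$ must again lie in $d\cap e$; thus $d\cap e$ is $\tau$-invariant, and as $\tau$ is a single $k$-cycle this is possible only for $j=0$ or $j=k$. In particular all intermediate overlaps $0<j<k$ are strictly sub-top-degree and may be ignored.

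It then remains to treat the two extreme overlaps. For $j=0$ the supports are disjoint and $\gamma=\rho\cup k$; a target of type $\rho\cup k$ factors by singling out one of its $m_{k}(\rho)+1$ cycles of length $k$ as $\tau$, so $C_{\rho,k}^{\rho\cup k}=m_{k}(\rho)+1$. For $j=k$ one has $e\subseteq d$ and the product fixes all of $e$, which forces $e$ to be a $k$-subset of the $m_{1}(\rho)+k$ fixed points of the target with $\tau$ any of the $(k-1)!$ cycles on $e$; hence $C_{\rho,k}^{(\rho\setminus k)\cup 1^{k}}=\binom{m_{1}(\rho)+k}{k}(k-1)!$, a term present only when $m_{k}(\rho)\ge 1$. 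Inserting the standard values $z_{k}=k$, $z_{\rho\cup k}=z_{\rho}\,k(m_{k}(\rho)+1)$ and $z_{(\rho\setminus k)\cup 1^{k}}=z_{\rho}\,(k\,m_{k}(\rho))^{-1}(m_{1}(\rho)+k)!/m_{1}(\rho)!$ into the ratios $z_{\rho}z_{k}/z_{\gamma}$ collapses the two coefficients to exactly $1$ and $k\,m_{k}(\rho)$, which is the assertion.

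The combinatorial heart is the classification of top-degree overlaps, and the step I expect to be the main obstacle is the degree bookkeeping of the middle paragraph: one must rule out every intermediate overlap $0<j<k$ from the top degree, and the $\tau$-invariance observation is the crux that makes this clean. The rest is careful manipulation of the centraliser orders $z_{\gamma}$, which is routine once the structure constants are in hand; equivalently, the same leading terms can be read off as the genus-zero connection coefficients of the class of $\rho$ with a $k$-cycle in $Z(\mathbb{C}[S_{n}])$.
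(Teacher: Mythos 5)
Your argument is correct, but note that the paper does not prove this lemma at all: it is recalled verbatim from Ivanov--Olshanski \cite[Proposition 4.12]{ivanov2002kerov} and used as a black box. What you have written is therefore a genuine self-contained proof, and it is a natural one given the machinery the paper already sets up: you transport the product to $\mathcal{A}_{\infty}$ via the isomorphism $F$ of Lemma \ref{lemma isomorphism} and compute the structure constants of $\alpha_{\rho}\alpha_{(k)}$ directly from the multiplication of partial permutations. I checked the key steps: the identity $|\gamma|=|\rho|+k-j$ for overlap $j=|d\cap e|$, the fact that no point of $e\setminus d$ is fixed by the product (this uses $k\geq 2$), the bound $m_1(\gamma)\leq m_1(\rho)+j$, the $\tau$-invariance of $d\cap e$ at top Kerov degree forcing $j\in\{0,k\}$, the counts $C_{\rho,k}^{\rho\cup k}=m_k(\rho)+1$ and $C_{\rho,k}^{(\rho\setminus k)\cup 1^k}=\binom{m_1(\rho)+k}{k}(k-1)!$, and the centraliser ratios $z_{\rho}z_{(k)}/z_{\gamma}$, which do collapse the two coefficients to $1$ and $k\,m_k(\rho)$. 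Two small points you should make explicit: first, that a target of type $\rho\cup k$ can only arise from $j=0$ and one of type $(\rho\setminus k)\cup 1^k$ only from $j=k$ (immediate from $|\gamma|=|\rho|+k-j$, so there is no cross-contamination between the two counts); second, in the $j=k$ count you argue only the \emph{necessity} of $e$ lying in the fixed-point set of the target $\pi$ --- you should add the one-line converse check that for every such $e$ and every $k$-cycle $\tau$ on $e$ the permutation $\sigma:=\pi\tilde{\tau}^{-1}$ does have cycle type exactly $\rho$ (its restriction to $e$ is the $k$-cycle $\tau^{-1}$ and its restriction to $f\setminus e$ has type $\rho\setminus k$). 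With those two sentences added the proof is complete, and it is essentially the same connection-coefficient computation as Ivanov--Olshanski's original one, recast in F\'eray's partial-permutation formalism where the stability of the structure constants in $n$ comes for free.
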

\begin{lemma}
 For a partition $\rho$,
 \[p_{\rho}^{\sharp}p_{1^k}^{\sharp}=p_{\rho\cup 1^k}^{\sharp}+V^{<}_{|\rho|_1+2k}.\]
\end{lemma}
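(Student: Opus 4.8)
The plan is to prove the identity by induction on $k$, reducing everything to repeated multiplication by the single function $p_1^{\sharp}$, whose product with an arbitrary shifted power sum I can compute \emph{exactly}. The key observation is that $p_1^{\sharp}$ is the size function, $p_1^{\sharp}(\lambda)=|\lambda|=n$, and that $p_{1^k}^{\sharp}$ is its falling factorial.

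First I would record the base case $k=1$ as an exact identity. Using the evaluation formula $p_{\rho}^{\sharp}(\lambda)=n^{\downarrow r}\hat{\chi}^{\lambda}_{\rho 1^{n-r}}$ with $r=|\rho|$ and $n=|\lambda|$, together with $p_1^{\sharp}(\lambda)=n$, I get $(p_{\rho}^{\sharp}p_1^{\sharp})(\lambda)=n\cdot n^{\downarrow r}\hat{\chi}^{\lambda}_{\rho 1^{n-r}}$. Since appending a fixed point does not change the cycle type, $\hat{\chi}^{\lambda}_{(\rho\cup 1)1^{n-r-1}}=\hat{\chi}^{\lambda}_{\rho 1^{n-r}}$, so $p_{\rho\cup 1}^{\sharp}(\lambda)=n^{\downarrow(r+1)}\hat{\chi}^{\lambda}_{\rho 1^{n-r}}$. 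The falling-factorial identity $n\cdot n^{\downarrow r}=n^{\downarrow(r+1)}+r\,n^{\downarrow r}$ then yields
\[
p_{\rho}^{\sharp}p_1^{\sharp}=p_{\rho\cup 1}^{\sharp}+|\rho|\,p_{\rho}^{\sharp},
\]
valid for every partition $\rho$ (two shifted symmetric functions agreeing on all partitions are equal). As $\deg(p_{\rho}^{\sharp})_1=|\rho|_1<|\rho|_1+2$, this settles $k=1$.

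Next I would note that $p_{1^k}^{\sharp}$ is the falling factorial of $p_1^{\sharp}$: since $p_{1^k}^{\sharp}(\lambda)=n^{\downarrow k}$, both sides agree on all partitions, hence $p_{1^k}^{\sharp}=p_1^{\sharp}(p_1^{\sharp}-1)\cdots(p_1^{\sharp}-k+1)$, giving the recursion $p_{1^k}^{\sharp}=p_{1^{k-1}}^{\sharp}(p_1^{\sharp}-k+1)$ that drives the induction. Assuming $p_{\rho}^{\sharp}p_{1^{k-1}}^{\sharp}=p_{\rho\cup 1^{k-1}}^{\sharp}+W$ with $W\in V^{<}_{|\rho|_1+2(k-1)}$, I multiply by $(p_1^{\sharp}-k+1)$ and apply the exact $k=1$ relation to $p_{\rho\cup 1^{k-1}}^{\sharp}p_1^{\sharp}$ (using $|\rho\cup 1^{k-1}|=|\rho|+k-1$). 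Collecting terms, the two $(k-1)$-multiples of $p_{\rho\cup 1^{k-1}}^{\sharp}$ combine cleanly, leaving
\[
p_{\rho}^{\sharp}p_{1^k}^{\sharp}=p_{\rho\cup 1^k}^{\sharp}+|\rho|\,p_{\rho\cup 1^{k-1}}^{\sharp}+W p_1^{\sharp}-(k-1)W.
\]
Here $p_{\rho\cup 1^{k-1}}^{\sharp}$ has Kerov degree $|\rho\cup 1^{k-1}|_1=|\rho|_1+2(k-1)$, while $W p_1^{\sharp}\in V^{<}_{|\rho|_1+2(k-1)+2}=V^{<}_{|\rho|_1+2k}$ by the filtration property \eqref{property of filtration} since $|(1)|_1=2$; thus every correction term lies in $V^{<}_{|\rho|_1+2k}$, completing the induction.

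The only place requiring care is the bookkeeping of the Kerov degree: multiplication by $p_1^{\sharp}$ raises $\deg(\cdot)_1$ by $2$ rather than by $1$, which is exactly why the error space is $V^{<}_{|\rho|_1+2k}$. Note also that Lemma \ref{prop 4.12 of IO} cannot be invoked here, as it is stated only for $k\geq 2$; the multiplier $p_1^{\sharp}$ must be treated by the separate exact computation above. Beyond this degree tracking I expect no genuine obstacle, since the base relation is an exact identity and the filtration property controls all remaining terms.
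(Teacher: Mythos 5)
Your proof is correct and follows essentially the same route as the paper: induction on $k$, peeling off one factor of $p_1^{\sharp}$ at a time and controlling the corrections with the filtration property \eqref{property of filtration}. The only difference is cosmetic — you re-derive the exact base identity $p_{\rho}^{\sharp}p_1^{\sharp}=p_{\rho\cup 1}^{\sharp}+|\rho|\,p_{\rho}^{\sharp}$ (which is precisely the cited \cite[Proposition 4.11]{ivanov2002kerov}) and use the exact falling-factorial recursion for $p_{1^k}^{\sharp}$ where the paper works modulo $V^{<}$ throughout.
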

\begin{proof}
 We prove the statement by induction on $k$, the case $k=1$ being proved in\hbox{\cite[Proposition 4.11]{ivanov2002kerov}}. This implies that for any $k$
 \[p_{1^k}^{\sharp}p_1^{\sharp}=p_{1^{k+1}}^{\sharp}+V^{<}_{2k+2}.\]
Suppose the lemma true for $k>1$, then
\begin{align*}
 p_{\rho}^{\sharp}p_{1^{k+1}}^{\sharp}&=p_{\rho}^{\sharp}p_{1^k}^{\sharp}p_{1}^{\sharp}+p_{\rho}^{\sharp}V^{<}_{2k+2}\\
 &=p_{\rho\cup 1^k}^{\sharp}p_1^{\sharp}+V^{<}_{|\rho|_1+2k+2}\\
 &=p_{\rho\cup1^k\cup 1}^{\sharp}+V^{<}_{|\rho|_1+2k+2},
\end{align*}
where we repetitively used property \eqref{property of filtration}.
\end{proof}
Define $\tilde{V}^=_k$ as the vector space with basis $\{p_{\theta}^{\sharp}$ such that $|\theta|_1=k$ and $m_1(\theta)>0\}$, a slightly different statement as property \eqref{property of filtration} holds:
\begin{lemma}\label{property of filtration 2}
For a positive integer $k$ and a partition $\rho$
 \begin{equation}
p_{\rho}^{\sharp}\cdot \tilde{V}^=_k\subseteq \tilde{V}^=_{|\rho|_1+k}+ V^{<}_{|\rho|_1+k}.  
\end{equation}
\end{lemma}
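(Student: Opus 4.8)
The plan is to reduce everything, by linearity, to a single basis vector of $\tilde{V}^=_k$ and then to exploit the fact that multiplication by $p_1^{\sharp}$ always manufactures a part equal to $1$ in its top-degree term. Concretely, it suffices to fix a partition $\theta$ with $|\theta|_1=k$ and $m_1(\theta)>0$ and to prove $p_\rho^{\sharp}p_\theta^{\sharp}\in\tilde{V}^=_{|\rho|_1+k}+V^<_{|\rho|_1+k}$. Since $m_1(\theta)>0$ I would write $\theta=\bar\theta\cup 1$, where $\bar\theta$ is obtained by deleting one part equal to $1$; a short computation gives $|\bar\theta|_1=k-2$. By the previous lemma in the case $k=1$ one has $p_{\bar\theta}^{\sharp}p_1^{\sharp}=p_\theta^{\sharp}+V^<_{k}$, so that $p_\theta^{\sharp}\in p_{\bar\theta}^{\sharp}p_1^{\sharp}+V^<_k$ and hence, by associativity,
\[p_\rho^{\sharp}p_\theta^{\sharp}\in \bigl(p_\rho^{\sharp}p_{\bar\theta}^{\sharp}\bigr)\,p_1^{\sharp}+p_\rho^{\sharp}V^<_k.\]
The second summand lies in $V^<_{|\rho|_1+k}$ by property \eqref{property of filtration}, so the whole problem is pushed onto the first summand.

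The heart of the argument is a sub-claim that I would isolate first: for every partition $\sigma$,
\[p_\sigma^{\sharp}p_1^{\sharp}\in\tilde{V}^=_{|\sigma|_1+2}+V^<_{|\sigma|_1+2}.\]
This is immediate from the previous lemma with $k=1$, which gives $p_\sigma^{\sharp}p_1^{\sharp}=p_{\sigma\cup 1}^{\sharp}+V^<_{|\sigma|_1+2}$, together with the bookkeeping $|\sigma\cup 1|_1=|\sigma|_1+2$ and $m_1(\sigma\cup 1)=m_1(\sigma)+1>0$; the latter is precisely the condition placing $p_{\sigma\cup 1}^{\sharp}$ in $\tilde{V}^=_{|\sigma|_1+2}$. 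In words, adjoining a part $1$ is exactly what guarantees membership in $\tilde{V}^=$.

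To finish I would expand $p_\rho^{\sharp}p_{\bar\theta}^{\sharp}$ in the $p^{\sharp}$-basis. Because $\deg(\cdot)_1$ defines a filtration, this element is a linear combination $\sum_\sigma c_\sigma p_\sigma^{\sharp}$ with $|\sigma|_1\le |\rho|_1+|\bar\theta|_1=|\rho|_1+k-2$. Applying the sub-claim term by term, the summands with $|\sigma|_1=|\rho|_1+k-2$ contribute to $\tilde{V}^=_{|\rho|_1+k}+V^<_{|\rho|_1+k}$, while those with $|\sigma|_1<|\rho|_1+k-2$ contribute to $\tilde{V}^=_{|\sigma|_1+2}+V^<_{|\sigma|_1+2}\subseteq V^<_{|\rho|_1+k}$, since a basis vector of degree exactly $|\sigma|_1+2<|\rho|_1+k$ lies in $V^<_{|\rho|_1+k}$. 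Hence $\bigl(p_\rho^{\sharp}p_{\bar\theta}^{\sharp}\bigr)p_1^{\sharp}\in\tilde{V}^=_{|\rho|_1+k}+V^<_{|\rho|_1+k}$, which together with the first paragraph proves the lemma.

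I expect the only delicate point to be the degree bookkeeping rather than any genuine obstacle: one must verify that both the main term and the correction term produced on multiplying by $p_1^{\sharp}$ keep a part equal to $1$, so that they land in $\tilde{V}^=$ and not merely in the full degree-$(|\rho|_1+k)$ space, and that the strictly lower-degree pieces are correctly absorbed into $V^<_{|\rho|_1+k}$. The one conceptual subtlety worth flagging is that we cannot multiply $p_\rho^{\sharp}$ directly into $p_\theta^{\sharp}$ using Lemma \ref{prop 4.12 of IO} (which only governs multiplication by a single $p_j^{\sharp}$); treating $p_\rho^{\sharp}p_{\bar\theta}^{\sharp}$ as an opaque element of controlled Kerov degree, and letting the final factor $p_1^{\sharp}$ do all the work of creating the $m_1>0$ feature, is what makes the argument go through.
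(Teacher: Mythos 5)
Your proof is correct and follows essentially the same route as the paper's: peel off a part equal to $1$ from $\theta$, reduce to $\bigl(p_{\rho}^{\sharp}p_{\bar\theta}^{\sharp}\bigr)p_1^{\sharp}$, expand $p_{\rho}^{\sharp}p_{\bar\theta}^{\sharp}$ in the $p^{\sharp}$-basis with Kerov degree at most $|\rho|_1+k-2$, and let the final multiplication by $p_1^{\sharp}$ create the part equal to $1$ that places the top-degree terms in $\tilde{V}^=_{|\rho|_1+k}$. Your explicit split of the basis expansion into terms of maximal versus strictly smaller Kerov degree is exactly the bookkeeping the paper's final containment performs implicitly.
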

\begin{proof}
 It is enough to show that, for a partition $\theta$ such that $|\theta|_1=k$ and $m_1(\theta)>0$, 
 \begin{equation}\label{this}
 p_{\rho}^{\sharp}\cdot p^{\sharp}_{\theta}\subseteq \tilde{V}^=_{|\rho|_1+|\theta|_1}+ V^{<}_{|\rho|_1+|\theta|_1}. 
 \end{equation}
Set $\theta=\tilde{\theta}\cup 1$. By the previous lemma  $p_{\theta}^{\sharp}=p_{\tilde{\theta}}^{\sharp}\cdot p_1^{\sharp}+V^<_{|\tilde{\theta}|_1+2}$ and 
\[p_{\rho}^{\sharp}\cdot p_{\theta}^{\sharp}=p_{\rho}^{\sharp}\cdot p_{\tilde{\theta}}^{\sharp}\cdot p_1^{\sharp}+V^<_{|\rho|_1+|\tilde{\theta}|_1+2}.\]
Set $C_{\rho,\tilde{\theta}}^{\tau}$ to be the structure constants in the basis $\{p_{\tau}^{\sharp}\}$ of the product $p_{\rho}^{\sharp}\cdot p_{\tilde{\theta}}^{\sharp}$, that is, 
\[p_{\rho}^{\sharp}\cdot p_{\tilde{\theta}}^{\sharp}=\sum_{|\tau|_1\leq |\rho|_1+|\tilde{\theta}|_1}C_{\rho,\tilde{\theta}}^{\tau}p_{\tau}^{\sharp},\]
where the restriction $|\tau|_1\leq |\rho|_1+|\tilde{\theta}|_1$ is a consequence of \eqref{property of filtration}. Thus 
\begin{align*}
p_{\rho}^{\sharp}\cdot p^{\sharp}_{\theta}&=\sum_{|\tau|_1\leq |\rho|_1+|\tilde{\theta}|_1}C_{\rho,\tilde{\theta}}^{\tau}p_{\tau}^{\sharp}\cdot p_1^{\sharp} +V^<_{|\rho|_1+|\tilde{\theta}|_1+2}\\
&=\sum_{|\tau|_1\leq |\rho|_1+|\tilde{\theta}|_1}C_{\rho,\tilde{\theta}}^{\tau}\left(p_{\tau\cup 1}^{\sharp}+V_{|\tau|_1+2}^<\right)+V^<_{|\rho|_1+|\tilde{\theta}|_1+2}\\
&\subseteq \tilde{V}^=_{|\rho|_1+|\tilde{\theta}|_1+2}+V^<_{|\rho|_1+|\tilde{\theta}|_1+2}=\tilde{V}^=_{|\rho|_1+|\theta|_1}+ V^{<}_{|\rho|_1+|\theta|_1}.
\end{align*}
This proves \eqref{this} and hence the lemma.
\end{proof}

\begin{proposition}
 Let $\rho$, $\theta$ be partitions. Then
 \[p_{\rho}^{\sharp}\cdot p_{\theta}^{\sharp}=p_{\rho\cup\theta}^{\sharp}+\tilde{V}^=_{|\rho|_1+|\theta|_1}+V^<_{|\rho|_1+|\theta|_1}.\]
\end{proposition}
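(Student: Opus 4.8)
The plan is to induct on the number $t$ of parts of $\theta$ that are at least $2$, peeling off one such part at each step and converting between $p^{\sharp}_{\theta'\cup k}$ and the product $p^{\sharp}_{\theta'}p^{\sharp}_{k}$ by means of Lemma \ref{prop 4.12 of IO}. Before starting I would record the additivity $|\rho\cup\theta|_1=|\rho|_1+|\theta|_1$, which holds because both $|\cdot|$ and $m_1(\cdot)$ are additive with respect to the union of partitions; this is what guarantees that every error term carries the single Kerov degree $|\rho|_1+|\theta|_1$. I would also note that Lemma \ref{prop 4.12 of IO} can be read uniformly as $p^{\sharp}_{\rho}p^{\sharp}_{k}=p^{\sharp}_{\rho\cup k}+\tilde{V}^=_{|\rho|_1+k}+V^{<}_{|\rho|_1+k}$ for every $k\geq 2$: when $m_k(\rho)\geq 1$ the extra summand $k\,m_k(\rho)\,p^{\sharp}_{(\rho\setminus k)\cup 1^k}$ has part $1$ of multiplicity $m_1(\rho)+k>0$ and Kerov degree exactly $|\rho|_1+k$, so it already lies in $\tilde{V}^=_{|\rho|_1+k}$.

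For the base case $t=0$ we have $\theta=1^m$, and the lemma computing $p^{\sharp}_{\rho}p^{\sharp}_{1^m}=p^{\sharp}_{\rho\cup 1^m}+V^{<}_{|\rho|_1+2m}$ gives the claim at once, since $|1^m|_1=2m$ and $V^{<}\subseteq\tilde{V}^=+V^{<}$. For the inductive step I would write $\theta=\theta'\cup k$ with $k\geq 2$ a part and $\theta'$ having $t-1$ parts that are at least $2$. Rearranging the uniform form of Lemma \ref{prop 4.12 of IO} and using $|\theta'|_1+k=|\theta|_1$ yields
\[p^{\sharp}_{\theta}=p^{\sharp}_{\theta'}p^{\sharp}_{k}+\tilde{V}^=_{|\theta|_1}+V^{<}_{|\theta|_1}.\]

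Multiplying by $p^{\sharp}_{\rho}$, the two error blocks are absorbed directly: $p^{\sharp}_{\rho}\cdot\tilde{V}^=_{|\theta|_1}\subseteq\tilde{V}^=_{|\rho|_1+|\theta|_1}+V^{<}_{|\rho|_1+|\theta|_1}$ by Lemma \ref{property of filtration 2}, and $p^{\sharp}_{\rho}\cdot V^{<}_{|\theta|_1}\subseteq V^{<}_{|\rho|_1+|\theta|_1}$ by property \eqref{property of filtration}. For the principal piece $p^{\sharp}_{\rho}p^{\sharp}_{\theta'}p^{\sharp}_{k}$ I would apply the induction hypothesis, $p^{\sharp}_{\rho}p^{\sharp}_{\theta'}=p^{\sharp}_{\rho\cup\theta'}+\tilde{V}^=_{|\rho|_1+|\theta'|_1}+V^{<}_{|\rho|_1+|\theta'|_1}$, and then multiply through by $p^{\sharp}_{k}$. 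The leading term becomes $p^{\sharp}_{\rho\cup\theta'}p^{\sharp}_{k}=p^{\sharp}_{\rho\cup\theta}+\tilde{V}^=_{|\rho|_1+|\theta|_1}+V^{<}_{|\rho|_1+|\theta|_1}$ by the uniform Lemma \ref{prop 4.12 of IO} together with the additivity noted above, while the two remainders are controlled once more by Lemma \ref{property of filtration 2} and property \eqref{property of filtration} (using commutativity, with $p^{\sharp}_{k}$ playing the role of the fixed factor). Collecting everything gives the asserted identity.

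The only genuine obstacle is bookkeeping: one must check that each error term lands at Kerov degree \emph{exactly} $|\rho|_1+|\theta|_1$ rather than higher, and that the ``$m_1>0$'' condition defining $\tilde{V}^=$ is preserved under the multiplications. This is precisely what the additivity of $|\cdot|_1$ and the clean inclusions in Lemma \ref{property of filtration 2} and property \eqref{property of filtration} are designed to handle, so once the uniform reformulation of Lemma \ref{prop 4.12 of IO} is in place the argument reduces to a routine collection of terms.
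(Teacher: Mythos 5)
Your proof is correct and follows essentially the same route as the paper: induction on the number of non-one parts of $\theta$, base case $\theta=1^m$ handled by the preceding lemma, and an inductive step that peels off a part $k\geq 2$ via Lemma \ref{prop 4.12 of IO}, multiplies by $p_{\rho}^{\sharp}$, and absorbs all error terms with Lemma \ref{property of filtration 2} and property \eqref{property of filtration}. Your explicit ``uniform reformulation'' of Lemma \ref{prop 4.12 of IO} (checking that the extra summand $k\,m_k(\rho)\,p^{\sharp}_{(\rho\setminus k)\cup 1^k}$ has Kerov degree exactly $|\rho|_1+k$ and positive $m_1$, hence lies in $\tilde{V}^=_{|\rho|_1+k}$) is exactly the step the paper performs implicitly, just stated more transparently.
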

\begin{proof}
 We prove the statement by induction on the number $l(\theta)-m_1(\theta)$ of non-one parts of $\theta$, with the initial case being $\theta=1^k$, shown in the previous lemma.
 \smallskip
 
 Consider now the claim true for $p_{\rho}^{\sharp}\cdot p_{\tilde{\theta}}^{\sharp}$ and set $\theta=\tilde{\theta}\cup k$ for $k\geq 2$. Then by Lemma \ref{prop 4.12 of IO}
 \begin{align*}
  p_{\theta}^{\sharp}=p_{\tilde{\theta}\cup k}^{\sharp}&=p_{\tilde{\theta}}^{\sharp}\cdot p_{k}^{\sharp}+\left\{\begin{array}{lr}
                                                                k\cdot m_k(\tilde{\theta})\cdot p_{(\tilde{\theta}\setminus k)\cup 1^k}^{\sharp}+V^<_{|\theta|_1},&\mbox{if }m_k(\tilde{\theta})\geq 1\\
                                                                  V^<_{|\theta|_1},&\mbox{if }m_k(\tilde{\theta})=0,
                                                                 \end{array}\right.\\
&=p_{\tilde{\theta}}^{\sharp}\cdot p_{k}^{\sharp}+\tilde{V}^=_{|\theta|_1}+V^<_{|\theta|_1}.
 \end{align*}
 Because of Property \eqref{property of filtration} and Lemma \eqref{property of filtration 2},
 \begin{equation}\label{bla}
 p_{\rho}^{\sharp}\cdot p_{\theta}^{\sharp}=p_{\rho}^{\sharp}\cdot p_{\tilde{\theta}}^{\sharp}\cdot p_k^{\sharp}+\tilde{V}^=_{|\rho|_1+|\theta|_1}+V^<_{|\rho|_1+|\theta|_1}. 
 \end{equation}
 By the inductive step 
 \[p_{\rho}^{\sharp}\cdot p_{\tilde{\theta}}^{\sharp}=p_{\rho\cup\tilde{\theta}}^{\sharp}+\tilde{V}^=_{|\rho|_1+|\tilde{\theta}|_1}+ V^<_{|\rho|_1+|\tilde{\theta}|_1}.\]
 Hence, by applying again Lemma \ref{prop 4.12 of IO}
  \begin{align*}
   p_{\rho}^{\sharp}\cdot p_{\tilde{\theta}}^{\sharp}\cdot p_k^{\sharp}&=p_{\rho\cup\tilde{\theta}}^{\sharp}\cdot p_k^{\sharp}+\tilde{V}^=_{|\rho|_1+|\tilde{\theta}|_1+k}+ V^<_{|\rho|_1+|\tilde{\theta}|_1+k}\\
   &=p_{\rho\cup\tilde{\theta}\cup k}^{\sharp}+\tilde{V}^=_{|\rho|_1+|\tilde{\theta}|_1+k}+ V^<_{|\rho|_1+|\tilde{\theta}|_1+k}.
  \end{align*}
We substitute the previous expression in \eqref{bla}
\[p_{\rho}^{\sharp}\cdot p_{\theta}^{\sharp}=p_{\rho\cup\tilde{\theta}\cup k}^{\sharp}+\tilde{V}^=_{|\rho|_1+|\tilde{\theta}|_1+k}+V^<_{|\rho|_1+|\tilde{\theta}|_1+k},\]
which concludes the proof.
\end{proof}
We can obtain a similar result in the algebra $\mathcal{A}_{\infty}$ by applying the isomorphism $F^{-1}$ defined in \ref{lemma isomorphism}. It is easy to see that $F^{-1}(\tilde{V}^=_k)$ is the space of linear combinations of $\alpha_{\rho}$ such that $|\rho|_1=k$ and $m_1(\rho)>0$. Similarly $F^{-1}(V^<_k)$ is the space of linear combinations of $\alpha_{\rho}$ such that $|\rho|_1<k$.
\begin{corollary}\label{corollary on alpha}
 Let $\rho$, $\theta$ be partitions. Then
 \[\alpha_{\rho}\cdot \alpha_{\theta}=\alpha_{\rho\cup\theta}+F^{-1}(\tilde{V}^=_k)+F^{-1}(V^<_k).\]
\end{corollary}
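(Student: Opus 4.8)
The plan is to transport the product formula of the preceding Proposition from $\Lambda^{\ast}$ back to $\mathcal{A}_{\infty}$ along the isomorphism $F$ of Lemma \ref{lemma isomorphism}. The first thing to secure is that $F$, and hence $F^{-1}$, is an \emph{algebra} isomorphism, since we intend to move a product relation across it. This is visible through evaluation: combining \eqref{act of phi} with $p_{\nu}^{\sharp}(\lambda)=|\lambda|^{\downarrow|\nu|}\hat{\chi}^{\lambda}_{\nu}$ gives $\hat{\chi}^{\lambda}(\varphi_n(\alpha_{\nu;n}))=\frac{1}{z_{\nu}}p_{\nu}^{\sharp}(\lambda)$, so that evaluating $F(a)$ at any $\lambda\vdash n$ returns $\hat{\chi}^{\lambda}(\varphi_n(a_n))$, where $a_n$ denotes the $\mathcal{A}_n$-component of $a$. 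Because $\varphi_n$ is an algebra morphism and $\hat{\chi}^{\lambda}$ is multiplicative on the centre $Z(\mathbb{C}[S_n])$ (a central element acts by a scalar on the irreducible module), the composite $\hat{\chi}^{\lambda}\circ\varphi_n$ is multiplicative; since the evaluations $f\mapsto f(\lambda)$ separate points of $\Lambda^{\ast}$, this forces $F(ab)=F(a)F(b)$.

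Granting multiplicativity, I would apply $F^{-1}$ to the Proposition term by term. Set $k=|\rho|_1+|\theta|_1$, so it reads $p_{\rho}^{\sharp}p_{\theta}^{\sharp}=p_{\rho\cup\theta}^{\sharp}+\tilde{V}^=_{k}+V^<_{k}$. From $\alpha_{\mu}\mapsto p_{\mu}^{\sharp}/z_{\mu}$ we have $F^{-1}(p_{\mu}^{\sharp})=z_{\mu}\alpha_{\mu}$, and multiplicativity gives $F^{-1}(p_{\rho}^{\sharp}p_{\theta}^{\sharp})=z_{\rho}z_{\theta}\,\alpha_{\rho}\alpha_{\theta}$. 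Using moreover the descriptions of $F^{-1}(\tilde{V}^=_{k})$ and $F^{-1}(V^<_{k})$ recalled just before the corollary and dividing by $z_{\rho}z_{\theta}$, I obtain
\[\alpha_{\rho}\alpha_{\theta}=\frac{z_{\rho\cup\theta}}{z_{\rho}z_{\theta}}\,\alpha_{\rho\cup\theta}+F^{-1}(\tilde{V}^=_{k})+F^{-1}(V^<_{k}),\]
where I use that both $F^{-1}(\tilde{V}^=_{k})$ and $F^{-1}(V^<_{k})$ are stable under scalar multiplication, so the factor $1/(z_{\rho}z_{\theta})$ leaves them unchanged.

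The step to watch is the bookkeeping of the constants $z$, which is where the clean leading coefficient $1$ of the Proposition is altered. A short computation gives $z_{\rho\cup\theta}/(z_{\rho}z_{\theta})=\prod_{j\ge1}\binom{m_j(\rho)+m_j(\theta)}{m_j(\rho)}$, which equals $1$ precisely when $\rho$ and $\theta$ have no common part size. This constant can also be checked directly: expanding $\alpha_{\rho}\alpha_{\theta}$ via the product of $\mathcal{B}_n$ as a sum of terms $(\tilde{\sigma}\tilde{\tau},d\cup e)$, it counts, for a fixed partial permutation of type $\rho\cup\theta$, the number of ways of splitting its cycles into a type-$\rho$ block and a type-$\theta$ block. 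Hence the only genuinely delicate point is recognising that the top term of $\alpha_{\rho}\alpha_{\theta}$ is this explicit multiple of $\alpha_{\rho\cup\theta}$; if the corollary is meant with leading coefficient $1$, it should be read as holding up to this scalar, while everything of strictly lower filtration degree is absorbed without further effort into $F^{-1}(\tilde{V}^=_{k})+F^{-1}(V^<_{k})$.
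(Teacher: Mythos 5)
Your proposal is correct and follows the same route as the paper: the paper's entire argument for this corollary is ``apply $F^{-1}$ from Lemma \ref{lemma isomorphism} to the preceding Proposition, noting that $F^{-1}(\tilde{V}^=_k)$ and $F^{-1}(V^<_k)$ are the spans of the $\alpha_{\varsigma}$ with $|\varsigma|_1=k$, $m_1(\varsigma)>0$, respectively $|\varsigma|_1<k$.'' The one place where you go beyond the paper is the bookkeeping of the normalisations, and you are right to flag it: since $F(\alpha_{\mu})=p_{\mu}^{\sharp}/z_{\mu}$, transporting $p_{\rho}^{\sharp}p_{\theta}^{\sharp}=p_{\rho\cup\theta}^{\sharp}+\tilde{V}^=_{k}+V^<_{k}$ really yields $\alpha_{\rho}\alpha_{\theta}=\frac{z_{\rho\cup\theta}}{z_{\rho}z_{\theta}}\,\alpha_{\rho\cup\theta}+F^{-1}(\tilde{V}^=_{k})+F^{-1}(V^<_{k})$ with $\frac{z_{\rho\cup\theta}}{z_{\rho}z_{\theta}}=\prod_{j\geq 1}\binom{m_j(\rho)+m_j(\theta)}{m_j(\rho)}$, and your direct combinatorial check (splitting the cycles of a fixed partial permutation of type $\rho\cup\theta$ into a $\rho$-block and a $\theta$-block; overlapping supports cannot contribute at this filtration degree) confirms that this multinomial coefficient, not $1$, is the true leading constant --- e.g.\ $\alpha_{(2)}\alpha_{(2)}=2\alpha_{(2,2)}+\cdots$. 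So the corollary as printed is off by this scalar whenever $\rho$ and $\theta$ share a part size. This does not damage the paper: in the only place the corollary is invoked (part 3 of Lemma \ref{power sums on partial jucys murphy}), all that is used is \emph{which} $\alpha_{\varsigma}$ with $|\varsigma|_1=|\nu|_{\mathbb{N}}$ and $m_1(\varsigma)=0$ can occur, not the value of their coefficients, and a positive constant in front of $\alpha_{\rho\cup\theta}$ leaves that argument intact.
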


\section{Asymptotic of the main term for the partial trace}\label{Asymptotic of the main term}

In this section we prove Theorem \ref{convergence of transformed co-transition}. The main step is to prove that \hbox{$\hat{\chi}^{\lambda}_{\rho}-\hat{\chi}^{\mu}_{\rho}\in o_P(n^{-\frac{\wt(\rho)}{2}})$}, where $\rho$ is a fixed partition, $\wt(\rho)=|\rho|-m_1(\rho)$, $\lambda$ is a partition of $n$ and $\mu\nearrow \lambda$.
\bigskip

For a partition $\nu=(\nu_1,\ldots,\nu_q)$ we define slightly modified power sums $\tilde{p}_{\nu}$; we will show (Lemma \ref{lemma between mu and lambda} and Equation \eqref{ciao}) that 
\begin{equation}\label{ecco1}
 \hat{\chi}^{\lambda}(\varphi_n(\tilde{p}_{\nu}(\xi_1,\ldots,\xi_n)))-\hat{\chi}^{\mu}(\varphi_{n-1}(\tilde{p}_{\nu}(\xi_1,\ldots,\xi_{n-1})))=\tilde{p}_{\nu}(\mathcal{C}_{\lambda})-\tilde{p}_{\nu}(\mathcal{C}_{\mu})\in o_P(n^{\frac{|\nu|+q}{2}}).
\end{equation}
In order to translate this result on a bound on $\hat{\chi}^{\lambda}_{\rho}-\hat{\chi}^{\mu}_{\rho}$ we need to study the expansion of the modified power sums evaluated on $\Xi$, the infinite set of Jucys-Murphy elements in the theory of partial permutations:
\[ \tilde{p}_{\nu}(\xi_1,\ldots,\xi_n)=\sum_{\varsigma}c_{\varsigma}\alpha_{\varsigma;n}.\]
With the right choice of $\nu$ and the right filtration we will prove (Proposition \ref{final corollary}) that 
\begin{equation}\label{ecco2}
\tilde{p}_{\nu}(\mathcal{C}_{\lambda})-\tilde{p}_{\nu}(\mathcal{C}_{\mu})= \hat{\chi}^{\lambda}\circ\varphi_n\left(\sum_{\varsigma}c_{\varsigma}\alpha_{\varsigma;n}\right) -\hat{\chi}^{\mu}\circ\varphi_{n-1}\left(\sum_{\varsigma}c_{\varsigma}\alpha_{\varsigma;n-1}\right) =\hat{\chi}^{\lambda}_{\rho}-\hat{\chi}^{\mu}_{\rho}+ o_P(n^{-\frac{\wt(\rho)}{2}}).
\end{equation}
\bigskip
Comparing \eqref{ecco1} and \eqref{ecco2}, this gives 
\[\hat{\chi}^{\lambda}_{\rho}-\hat{\chi}^{\mu}_{\rho}\in o_P(n^{-\frac{\wt(\rho)}{2}}).\]
\begin{remark}
It is easy to see that $\hat{\chi}^{\lambda}_{\rho}-\hat{\chi}^{\mu}_{\rho}\in o_P(n^{-\frac{|\rho|-l(\rho)}{2}})$, where $l(\rho)$ is the number of parts of $\rho$; for example, considering balanced diagrams, in \cite{feray2007asymptotics} the authors prove an explicit formula for the normalized character, which implies $\hat{\chi}^{\lambda}_{\rho}-\hat{\chi}^{\mu}_{\rho}\in o_P(n^{-\frac{|\rho|-l(\rho)}{2}})$. Alternatively one can look at the descriptions of normalized characters expressed as polynomials in terms of free cumulants, studied by Biane in \cite{Biane2003}. The action of removing a box from a random partition $\lambda$ affects free cumulants in a sense described in \cite{dolkega2010explicit}, which furnish the aforementioned bound. On the other hand we needed a stronger result, namely Proposition \ref{final corollary}, and for this reason we introduce the modified power sums.
\end{remark}
\bigskip

Through the section, $\sigma$ will be a fixed permutation, $\rho$ its cycle type, and we consider $\nu=(\nu_1,\ldots,\nu_q)$, such that $\rho=\nu+\underline{1}=(\nu_1+1,\ldots,\nu_q+1,1\ldots,1)$.
 \subsection{Modified power sums}
 \begin{defi}
 For a partition $\nu=(\nu_1,\ldots,\nu_q)$ the \emph{modified power sum} is 
 \[\tilde{p}_{\nu}(x_1,x_2,\ldots)=\prod_{i=1}^q\left(p_{\nu_i}(x_1,x_2,\ldots)-\cat\left(\frac{\nu_i}{2}\right)\cdot\left(\frac{\nu_i}{2}\right)! \hspace{0.3cm}\alpha_{(1^{\frac{\nu_i}{2}+1})}\right),\]
 where $\cat(k)=\binom{2k}{k}\frac{1}{k+1}$ is the $k$-th Catalan number if $k$ is an integer, and $0$ otherwise.
 \end{defi}
 \bigskip
 
 \begin{lemma}\label{power sums on partial jucys murphy}
 Set $\nu=(\nu_1,\ldots,\nu_q)$, then
  \begin{equation}\label{p tilde}
   \tilde{p}_{\nu}(\Xi)=\sum_{\varsigma}c_{\varsigma}\alpha_{\varsigma},
  \end{equation}
  for some non-negative integers $c_{\varsigma}$. We have
  \begin{enumerate}
   \item the sum runs over the partitions $\varsigma$ such that $|\varsigma|_1\leq |\nu|_{\mathbb{N}}$;
   \item $c_{\nu+\underline{1}}=\prod_i (m_i(\nu)!)$;
   \item if $c_{\varsigma}\neq 0$, $|\varsigma|_1=|\nu|_{\mathbb{N}}$ and $\varsigma\neq \nu+\underline{1}$ then $m_1(\varsigma)>0$.
  \end{enumerate}
 \end{lemma}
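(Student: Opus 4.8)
The plan is to prove the three claims by analyzing the expansion of $\tilde{p}_\nu(\Xi)$ using the multiplicative structure of the modified power sums together with the product formula for the $\alpha_\rho$ basis (Corollary \ref{corollary on alpha}) and F\'eray's expansion of single power sums $p_k(\Xi)$. Since $\tilde{p}_\nu(\Xi) = \prod_{i=1}^q \bigl(p_{\nu_i}(\Xi) - \cat(\nu_i/2)(\nu_i/2)!\,\alpha_{(1^{\nu_i/2+1})}\bigr)$ is a product of $q$ factors, I would first understand each factor, then control the product through the filtration bounds established earlier. The natural filtration here is $\deg(\cdot)_{\mathbb{N}}$ since the target degree is $|\nu|_{\mathbb{N}} = |\nu| + l(\nu)$, but the claims are phrased in the Kerov filtration $\deg(\cdot)_1 = |\cdot| + m_1(\cdot)$ via the isomorphism $F$; reconciling these two filtrations is the crux of the argument.

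\textbf{Analyzing a single factor.} First I would recall from F\'eray's result (quoted before Lemma \ref{lemma isomorphism}) that $p_{\nu_i}(\Xi) = \nu_i!\,\alpha_{(\nu_i+1)} + \sum_{|\rho|<\nu_i+1}c_\rho\,\alpha_\rho$ when $q=1$ — more precisely $p_k(\Xi) = \alpha_{(k+1)} + \sum_{|\rho| < k+1} c_\rho \alpha_\rho$ with nonnegative integer coefficients (here $m_1((k)) = 0$ for $k \geq 2$, so the leading coefficient is $1$). The subtracted correction term $\cat(\nu_i/2)(\nu_i/2)!\,\alpha_{(1^{\nu_i/2+1})}$ is designed precisely to cancel the top-degree contribution of the form $\alpha_{(1^{\text{something}})}$ in $p_{\nu_i}(\Xi)$ when $\nu_i$ is even; for odd $\nu_i$ the Catalan term vanishes and there is nothing to subtract. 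I would verify that after this subtraction, each factor has the shape $\alpha_{(\nu_i+1)} + (\text{lower-degree and one-rich terms})$, where by ``one-rich'' I mean partitions $\varsigma$ with $m_1(\varsigma)>0$ appearing at the top $\mathbb{N}$-degree. This is where I expect the combinatorial bookkeeping on contents of hook-shaped/column terms to be most delicate, and where the specific value of the Catalan correction must be checked against the known coefficient of $\alpha_{(1^{k/2+1})}$ in $p_k(\Xi)$.

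\textbf{Multiplying the factors.} Next I would expand the product using Corollary \ref{corollary on alpha}, which gives $\alpha_\rho \cdot \alpha_\theta = \alpha_{\rho\cup\theta} + F^{-1}(\tilde{V}^=_{|\rho|_1+|\theta|_1}) + F^{-1}(V^<_{|\rho|_1+|\theta|_1})$. Taking the product of the leading terms $\alpha_{(\nu_i+1)}$ across all $i$ yields $\alpha_{\nu+\underline{1}}$ as the principal term, and multiplicity tracking shows its coefficient is $\prod_i m_i(\nu)!$ — this is claim (2), which follows because merging $q$ distinct leading parts that repeat according to $m_i(\nu)$ generates the symmetry factor $\prod_i m_i(\nu)!$ exactly as in the $q=1$ base case of F\'eray's formula (and consistently with $\alpha_{\nu+\underline 1}$ appearing with that coefficient in $p_\nu(\Xi)$). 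For claim (1), I would show that every $\alpha_\varsigma$ arising in the product satisfies $|\varsigma|_1 \leq |\nu|_{\mathbb{N}}$: the leading term $\nu+\underline{1}$ has $|\nu+\underline{1}|_1 = |\nu+\underline{1}| + m_1(\nu+\underline{1})$, and since adding $1$ to each part of $\nu$ removes all $1$-parts (for $\nu_i \geq 1$) while raising the size, a short computation gives $|\nu+\underline{1}|_1 = |\nu| + l(\nu) = |\nu|_{\mathbb{N}}$, and Corollary \ref{corollary on alpha} guarantees all other terms have strictly smaller or equal filtration degree.

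\textbf{The main obstacle and claim (3).} The hard part will be claim (3): that any top-$\deg_1$-degree term other than $\nu+\underline{1}$ necessarily has $m_1(\varsigma)>0$. This is exactly the content of the $\tilde{V}^=$ versus $V^<$ dichotomy in Corollary \ref{corollary on alpha}: the corollary says that in each product $\alpha_\rho\cdot\alpha_\theta$, the only term of \emph{full} filtration degree $|\rho|_1+|\theta|_1$ that is \emph{not} $\alpha_{\rho\cup\theta}$ lives in $F^{-1}(\tilde{V}^=)$, i.e. has a positive number of $1$-parts. I would argue inductively on the number of factors: at each multiplication step, the genuinely new top-degree contributions beyond the merged leading partition are confined to $F^{-1}(\tilde{V}^=)$ by Lemma \ref{property of filtration 2}, and the correction terms subtracted in defining $\tilde{p}_\nu$ (the $\alpha_{(1^{\cdot})}$ pieces) are themselves one-rich, so they cannot contribute a new one-free top-degree partition. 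The delicate point is ensuring no cancellation or conspiracy produces a one-free partition $\varsigma \neq \nu+\underline{1}$ at the top degree; this requires tracking that the only way to reach $\deg_1 = |\nu|_{\mathbb{N}}$ with $m_1(\varsigma)=0$ is by the disjoint-union merge of all leading parts, which reconstructs precisely $\nu+\underline{1}$. Handling the interaction between the two filtrations carefully — translating the $\mathbb{N}$-degree bound on the product into the Kerov-degree statements (1) and (3) — is the step I anticipate needing the most care.
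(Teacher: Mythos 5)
Your superstructure --- treat the $q=1$ case first, then induct on the number of factors using Corollary \ref{corollary on alpha} and Lemma \ref{property of filtration 2} to control the top Kerov-degree terms of the product --- is exactly the paper's route for claims (2) and (3) and for reducing (1) to a single factor. The genuine gap is that essentially all of the combinatorial content of the lemma lives in the single-factor analysis, and your proposal defers precisely that to ``I would verify'' and ``must be checked against the known coefficient''. F\'eray's expansion only bounds the non-leading terms by $|\rho|<k+1$, which gives no useful control on $|\rho|_1=|\rho|+m_1(\rho)$ (a priori this can be as large as $2|\rho|$, far above $k+1$); so nothing quoted in the paper hands you the shape ``$\alpha_{(k+1)}$ plus lower-degree and one-rich terms'' for $\tilde p_k(\Xi)$. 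What has to be proved from scratch, and is not in your proposal, is: (i) every partial permutation $\left((j_1,h)\cdots(j_k,h),\{j_1,\ldots,j_k,h\}\right)$ occurring in $p_k(\Xi)$ has cycle type $\varsigma$ with $|\varsigma|_1\le k+2$ (this comes from counting that each fixed $j_i$ must occur at least twice and each non-fixed $j_i$ at least once in the list); (ii) equality $|\varsigma|_1=k+2$ forces the product to be the identity on a set of size $k/2+1$ with $k$ even (the paper proves this by an induction on $k$, splitting the word at the second occurrence of $j_k$ and showing the two halves have disjoint supports); (iii) the coefficient of $\alpha_{(1^{k/2+1})}$ equals $\cat(k/2)\cdot(k/2)!$, via a $(k/2)!$-to-one map from the relevant lists onto non-crossing pair partitions of $\{1,\ldots,k\}$. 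Point (iii) is not a ``known'' fact you can simply cite in this form; it is what makes the definition of $\tilde p_\nu$ the right one.

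A second, smaller instance of the same gap: your induction for claim (3) needs its own base case, namely that the unique $\varsigma$ with $|\varsigma|_1=k+1$, $m_1(\varsigma)=0$ and $c_\varsigma\neq 0$ in $\tilde p_k(\Xi)$ is $\varsigma=(k+1)$. This again requires returning to the expansion into products of transpositions ($|\varsigma|=k+1$ forces $j_1,\ldots,j_k,h$ pairwise distinct, hence a single $(k+1)$-cycle); it does not follow from the filtration formalism. Your phrase about ``bookkeeping on contents of hook-shaped/column terms'' suggests you are picturing the wrong combinatorics here: the missing arguments are about multiplicities in words of transpositions $(j_i,h)$, not about contents or shapes. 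The multiplicative part of your plan is sound and matches the paper, but as written the proposal assumes the hardest part of the lemma rather than proving it.
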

\bigskip

\begin{proof}
\begin{proofpart}
Since $\deg(\alpha_{\varsigma})_1=|\varsigma|_1$ is a filtration, it is enough to prove the statement for $q=1$, that is, $\nu=(k)$ for some positive integer $k$. We consider
\begin{equation}\label{power}
 p_k(\Xi)=\sum_{1\leq j_1,\ldots,j_k<h}\left((j_1,h),\{j_1,h\}\right)\cdot\ldots\cdot\left((j_k,h),\{j_k,h\}\right).
\end{equation}
Let $\left(\sigma,d\right)=\left((j_1,h)\cdot\ldots\cdot(j_k,h),\{j_1,\ldots,j_k,h\}\right)$ be a term of the previous sum and let $\varsigma\vdash |d|$ be the cycle type of $\sigma$. This is the outline of the proof: firstly, we show an upper bound for $|\varsigma|_1,$ that is, $|\varsigma|_1\leq k+2$. Then we check some conditions that $\sigma$ must satisfy in order to have $|\varsigma|_1=k+2$ (if such a $\sigma$ exists). We prove that $|\varsigma|_1= k+2$ if and only if $(\sigma,d)=(\id,d)$ with $|d|=k/2+1$ and $k $ even. Finally, we calculate how many times the partial permutation $(\id,d)$, with $|d|=k/2+1$, appears in the modified power sum $p_k(\Xi)$. This number is the coefficient of $\alpha_{(1^{\frac{k}{2}+1})}$ in $p_k(\Xi)$, and we show that it is equal to $\cat\left(\frac{k}{2}\right)\cdot\left(\frac{k}{2}\right)!$. Since we defined $\tilde{p}_{k}(\Xi)=p_{k}(\Xi)-\cat\left(\frac{k}{2}\right)\cdot\left(\frac{k}{2}\right)!\cdot\alpha_{(1^{\frac{k}{2}+1})}$ this shows that all the terms in $\tilde{p}_k(\Xi)$ satisfy $|\varsigma|_1\leq k+1=|\nu|_{\mathbb{N}}$ when $\nu=(k)$.
\medskip 

We first estimate
\begin{align*}
 |\varsigma|_1&=|\{j_1,\ldots,j_k,h\}|+m_1(\varsigma)\\
 &= 2\#\{j_i, \mbox{ s.t. } \sigma j_i=j_i\}+\#\{j_i, \mbox{ s.t. } \sigma j_i\neq j_i\}+2\delta(h\mbox{ is fixed by }\sigma)+\delta(h\mbox{ is not fixed by }\sigma),
\end{align*}
where $\delta$ is the Kronecker delta. We stress out that, when counting the set cardinalities above, we do not count multiplicities; for example, if 
\[\sigma=(1,5)(2,5)(2,5)(1,5)(3,5)=(1)(2)(3,5),\qquad\mbox{ then}\]
\[\#\{j_i, \mbox{ s.t. } \sigma j_i=j_i\}=\#\{1,2\}=2,\qquad \#\{j_i, \mbox{ s.t. } \sigma j_i\neq j_i\}=\#\{3\}=1,\qquad\varsigma=(2,1,1).\]
Notice that in the sequence $(j_1,\ldots,j_k)$ all fixed points must appear at least twice, while non fixed points must appear at least once, hence
\[2\#\{j_i, \mbox{ s.t. } \sigma j_i=j_i\}+\#\{j_i, \mbox{ s.t. } \sigma j_i\neq j_i\}\leq k,\]
while obviously,
\[2\delta(h\mbox{ is fixed by }\sigma)+\delta(h\mbox{ is not fixed by }\sigma)\leq 2.\]
Thus $|\varsigma|_1\leq k+2.$
\medskip

Suppose there exists $\sigma$ such that $|\varsigma|_1=k+2$, then from the proof of the inequality $|\varsigma|_1\leq k+2$ we know that $\sigma$ satisfies
\begin{equation}\label{property at most twice}
 \mbox{for each }i,\mbox{ }j_i\mbox{ appears at most twice;}
\end{equation}
\begin{equation}\label{property of exactly twice}
 j_i \mbox{ is fixed by }\sigma\mbox{ iff it appears exactly twice in the multiset }\{j_1,\ldots,j_k\};
\end{equation}
\begin{equation}\label{property of h fixed}
 h\mbox{ is a fixed point}.
\end{equation}
 We prove by induction on $k$ that if $|\varsigma|_1=k+2$ and $\alpha_{\varsigma}$ appears in the sum \eqref{power} then $k$ is even and $\varsigma=(1^{\frac{k}{2}+1})$. If $k=1$ then $\varsigma=(2)$ and $|\varsigma|_1=2<3=k+2$. If $k=2$ then $\varsigma$ can be either $\varsigma=(3)$ or $\varsigma=(1,1)$. By our request that $|\varsigma|_1=4=k+2$ we see that we must have $\varsigma=(1,1)$. Consider now the statement true up to $k-1$ and $\sigma=(j_1,h)\cdot\ldots\cdot(j_k,h)$. By property \eqref{property of h fixed} $h$ is fixed, so that $j_k$ must appear at least twice, and by \eqref{property at most twice} $j_k$ appears exactly twice. Hence, by \eqref{property of exactly twice} $j_k$ is fixed, thus it exists a unique $l<k-1$ such that $j_{l+1}=j_k$ and 
\[\sigma=(j_1,h)\ldots(j_l,h)\cdot(j_k,h)\cdot(j_{l+2},h)\ldots(j_{k-1},h)(j_k,h)=\tau_1(j_k,h)\tau_2(j_k,h)=\tau_1\tau_2,\]
where 
\[\tau_1=\left\{\begin{array}{lr}   \id &\mbox{if }l=0\\(j_1,h)\ldots(j_l,h)&\mbox{if }l>0,        \end{array}\right.\qquad\quad\tau_2=\left\{\begin{array}{lr}   \id &\mbox{if }l=k-2\\(j_{l+2},j_k)\ldots(j_{k-1},j_k)&\mbox{if }l<k-2.        \end{array}\right.\]
If $l=0$ or $l=k-2$ we can apply induction on either $\tau_1$ or $\tau_2$ and the result follows, so consider $0<l<k-2$. We claim that the sets $\{j_1,\ldots j_l\}$ and $\{j_{l+2},\ldots,j_k\}$ are disjoint. Notice first that $j_k$ does not appear in $\{j_1,\ldots j_l\}$. Suppose $j_a=j_b$ for some $a\leq l<b$, and choose $b$ minimal with this property. Then $j_b$ is not fixed by $\tau_2$, that is, $\tau_2(j_b)=j_{\tilde{b}}$ with either $\tilde{b}<b$ or $\tilde{b}=k$. In the first case ($\tilde{b}<b$), by minimality of $b$, $j_{\tilde{b}}$ does not appear in $\tau_1$; in the second case ($\tilde{b}=k$) we know that $j_k$ does not appear in $\{j_1,\ldots j_l\}$. Hence in either case $\sigma(j_b)=j_{\tilde{b}}$. This is a contradiction, since $j_b$ appears twice in $\sigma$ and therefore must be fixed (property \eqref{property of exactly twice}). This proves that $\{j_1,\ldots j_l\}\cap\{j_{l+2},\ldots,j_k\}=\emptyset$.

Therefore $\tau_1$ and $\tau_2$ respect properties \eqref{property at most twice},\eqref{property of exactly twice} and \eqref{property of h fixed}, and we can apply the inductive hypothesis to obtain that $\tau_1=\id=\tau_2$ and both $l$ and $k-l-2$ are even. We conclude hence that if $|\varsigma|_1=k+2$ and $\alpha_{\varsigma}$ appears in the sum \eqref{p tilde} then $\varsigma=(1^{\frac{k}{2}+1})$ and $k$ even. 
\medskip 

We assume now that $k$ is even and we calculate the coefficient of $\alpha_{(1^{\frac{k}{2}+1})}$ in $p_k(\Xi)$, which is equal to the coefficient of $\left(\id,d\right)=\left((j_1,h)\cdot\ldots\cdot(j_k,h),\{j_1,\ldots,j_k,h\}\right)$ in the sum \eqref{power}, for a fixed $d$ of cardinality \hbox{$k/2+1$}. In order to do this we count the number of lists $L=\left((j_1,h),\ldots,(j_k,h)\right)$ such that \hbox{$\{j_1,\ldots,j_k,h\}=d$} and $(j_1,h)\ldots(j_k,h)=\id$. We call $\mathcal{L}_d$ the set of these lists.
\smallskip

Define a \emph{pair set partition} of the set $[k]=\{1,\ldots,k\}$ as a set partition $A=\{\{r_1,s_1\},\ldots,\{r_{\frac{k}{2}},s_{\frac{k}{2}}\}\}$ of $[k]$ into pairs. Such a set partition is said to be \emph{crossing} if $r_a<r_b<s_a<s_b$ for some $a,b\leq k/2$, otherwise the set partition is said to be \emph{non crossing}. Calling $\mathcal{S}_k$ the set of non crossing pair set partitions, it is known that $|\mathcal{S}_k|=\cat(k/2)$, see \cite{hora1998central}. We build a map $\psi_k\colon \mathcal{L}_d\to\mathcal{S}_k$ and we prove that this map is $\left(\frac{k}{2}\right)!$-to-one, which implies that $|\mathcal{L}_d|=c_{(1^{\frac{k}{2}+1})}=\cat\left(\frac{k}{2}\right)\cdot\left(\frac{k}{2}\right)!$.
\smallskip

Let $L\in\mathcal{L}_d$, $L=(L_1,\ldots,L_k)=((j_1,h),\ldots,(j_k,h))$; we have proven that, since $(j_1,h)\cdot\ldots\cdot(j_k,h)=\id$ and $|d|=|\{j_1,\ldots,j_k,h\}|=k/2+1$, then each element $(j_i,h)$ must appear exactly twice in $L$. We construct a pair partition $\psi_k(L)$ such that a pair $\{r,s\}\in \psi_k(L)$ iff $j_r=j_s$. By \cite[Lemma 2]{hora1998central} this set partition is non crossing. This map is clearly surjective, although not injective: every permutation $\gamma$ acting on $d=\{j_1,\ldots,j_k,h\}$ which fixes $h$ acts also on $\mathcal{L}_d$: $\gamma(L)=((\gamma(j_1),h),\ldots,(\gamma(j_k),h))$. Notice that $h>j_1,\ldots,j_k$ and this is why $\gamma(h)=h$ in order to have an action on $\mathcal{L}_d$. Moreover $\psi_k(L)=\psi_k(L')$ if and only if $L=\gamma(L')$ for some $\gamma$ in $S_{d\setminus h}$. Thus $\psi_k$ is a $\left(\frac{k}{2}\right)!$-to-one map, hence $c_{(1^{\frac{k}{2}+1})}=\cat\left(\frac{k}{2}\right)\cdot\left(\frac{k}{2}\right)!$ and the first part of the proof is concluded.
\end{proofpart}
\begin{proofpart}
 The second statement is shown in \cite[section 2]{feray2012partial}.
\end{proofpart}
\begin{proofpart}
 This claim is proven by induction on $q$, the number of parts of $\nu$. The initial case is $q=1$ and
 \[\tilde{p}_k(\Xi)=\sum_{|\varsigma|_1\leq k+1}c_{\varsigma}\alpha_{\varsigma}.\]
 We consider the $\alpha_{\varsigma}$'s appearing in the sum such that $|\varsigma|_1=k+1$ and $m_1(\varsigma)=0$, so that $|\varsigma|=k+1$. Expanding the sum in a way similar to Equation \eqref{power}, we see that $|\varsigma|=k+1=\#\{j_1,\ldots,j_k,h\}$, and all the elements in this set must be pairwise different. Hence $\sigma=(j_1,h)\cdot\ldots\cdot (j_k,h)=(h,j_k,\ldots,j_1)$ and $\varsigma=(k+1)$. Thus the statement for $q=1$ is proved.
 
 Let $\nu=(\nu_1,\ldots,\nu_q)$ with $q\geq 2$. Set $\tilde{\nu}=(\nu_1,\ldots,\nu_{q-1})$ and suppose, by the induction hypothesis, that the assertion is true for $\tilde{p}_{\tilde{\nu}}(\Xi)$. Then
 \begin{align*}
  \tilde{p}_{\nu}(\Xi)&=\tilde{p}_{\tilde{\nu}}(\Xi)\tilde{p}_{\nu_q}(\Xi)\\
  &=\left(\sum_{|\tilde{\varsigma}|_1\leq |\tilde{\nu}|_{\mathbb{N}}}c_{\tilde{\varsigma}}\alpha_{\tilde{\varsigma}}\right)\cdot\left(\sum_{|\theta|_1\leq \nu_q+1}c_{\theta}\alpha_{\theta}\right)\\
  &=\sum_{\substack{|\tilde{\varsigma}|_1\leq |\tilde{\nu}|_{\mathbb{N}}\\|\theta|_1\leq \nu_q+1}}c_{\tilde{\varsigma}}c_{\theta}\cdot \alpha_{\tilde{\varsigma}}\alpha_{\theta}.
 \end{align*}
We apply now Corollary \ref{corollary on alpha} and obtain
\[\tilde{p}_{\nu}(\Xi)=\sum_{\substack{|\tilde{\varsigma}|_1\leq |\tilde{\nu}|_{\mathbb{N}}\\|\theta|_1\leq \nu_q+1}}c_{\tilde{\varsigma}}c_{\theta}\alpha_{\tilde{\varsigma}\cup\theta}+F^{-1}(\tilde{V}^=_{|\nu|_{\mathbb{N}}})+F^{-1}(V^<_{|\nu|_{\mathbb{N}}}).\]
 Hence there exists only one term in the previous sum such that $|\varsigma|_1=|\tilde{\varsigma}\cup\theta|_1=|\nu|_{\mathbb{N}}$ and $m_1(\varsigma)=0$, that is, $\varsigma=\nu+\underline{1}$, and the proof is completed. \qedhere
\end{proofpart}
\end{proof}
Let now $\lambda\vdash n$ be a random partition distributed with the Plancherel measure. We say that a function $X$ on the set of partitions is $X\in o_P(n^{\beta})$ if $n^{-\beta}X(\lambda)\overset{p}\to 0$. Similarly, $X\in O_P(n^{\beta})$ is \emph{stochastically bounded} by $n^{\beta}$ if for any $\epsilon>0$ there exists $M>0$ such that $P_{PL}(|X(\lambda)n^{-\beta}|>M)\leq \epsilon$. For example, as a consequence of Kerov's result on the convergence of characters (Theorem \ref{convergence of characters}), $\hat{\chi}^{\lambda}_{\rho}\in O_P(n^{-wt(\rho)/2})$. If a function $X(\lambda,\mu_j)$ depends also on a subpartition $\mu_j\nearrow\lambda$, by $X(\lambda,\mu_j)\in o_P(n^{\beta})$ we mean that $\max_j X(\lambda,\mu_j)\in o_P(n^{\beta})$, and similarly for the notion of stochastic boundedness.

\begin{lemma}\label{newlemma}
 For each partition $\rho$ we have that $\hat{\chi}^{\lambda}(\varphi_n(\alpha_{\rho;n}))\in O_P( n^{\frac{|\rho|_1}{2}})$, where $|\rho|_1=|\rho|+m_1(\rho)$.
\end{lemma}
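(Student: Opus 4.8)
The plan is to reduce the statement directly to Kerov's central limit theorem (Theorem \ref{convergence of characters}) via the explicit evaluation of $\hat{\chi}^{\lambda}\circ\varphi_n$ on the basis $(\alpha_{\rho;n})$ already recorded in the excerpt. Recall from \eqref{act of phi} and the identity stated just after it that, since $|\lambda|=n$,
\[\hat{\chi}^{\lambda}(\varphi_n(\alpha_{\rho;n}))=\frac{1}{z_{\rho}}\,n^{\downarrow|\rho|}\,\hat{\chi}^{\lambda}_{\rho}.\]
Thus the quantity to be estimated is a product of three factors: the constant $1/z_{\rho}$, which does not depend on $n$; the deterministic falling factorial $n^{\downarrow|\rho|}$; and the random normalized character $\hat{\chi}^{\lambda}_{\rho}$. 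The entire probabilistic content is carried by the last factor, and the whole proof amounts to combining its order with the growth of $n^{\downarrow|\rho|}$.

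First I would dispose of the deterministic factor by noting that $n^{\downarrow|\rho|}=n(n-1)\cdots(n-|\rho|+1)\le n^{|\rho|}$, so that $n^{\downarrow|\rho|}/z_{\rho}\in O(n^{|\rho|})$. Next I would invoke Kerov's theorem in the form quoted in the text preceding this lemma, namely $\hat{\chi}^{\lambda}_{\rho}\in O_P(n^{-\wt(\rho)/2})$ with $\wt(\rho)=|\rho|-m_1(\rho)$. Since multiplying a stochastically bounded sequence by a deterministically bounded one preserves stochastic boundedness with the product of the orders, this yields
\[\hat{\chi}^{\lambda}(\varphi_n(\alpha_{\rho;n}))\in O_P\!\left(n^{|\rho|}\cdot n^{-\wt(\rho)/2}\right).\]

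It then remains only to simplify the exponent, which is where the stated value $|\rho|_1/2$ emerges:
\[|\rho|-\frac{\wt(\rho)}{2}=|\rho|-\frac{|\rho|-m_1(\rho)}{2}=\frac{|\rho|+m_1(\rho)}{2}=\frac{|\rho|_1}{2},\]
completing the argument. There is no genuine obstacle here: everything follows from the already-established identity for $\hat{\chi}^{\lambda}\circ\varphi_n$ together with Kerov's bound. The only point requiring care is the bookkeeping of exponents—remembering that the growth of $n^{\downarrow|\rho|}$ contributes $n^{|\rho|}$ while the decay of the character is governed by $\wt(\rho)$ rather than $|\rho|$—so that the two contributions combine to the non-obvious order $|\rho|_1/2$ instead of $|\rho|/2$.
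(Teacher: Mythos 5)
Your proposal is correct and follows essentially the same route as the paper: both apply the identity $\hat{\chi}^{\lambda}(\varphi_n(\alpha_{\rho;n}))=\frac{1}{z_{\rho}}n^{\downarrow|\rho|}\hat{\chi}^{\lambda}_{\rho}$ from \eqref{act of phi} and then invoke the bound $\hat{\chi}^{\lambda}_{\rho}\in O_P(n^{-\wt(\rho)/2})$ coming from Kerov's theorem. Your version merely spells out the exponent bookkeeping that the paper leaves implicit, which is a harmless (and arguably helpful) elaboration.
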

\begin{proof}
From Equation \eqref{act of phi}, $\hat{\chi}^{\lambda}(\varphi_n(\alpha_{\rho;n}))=\frac{c_{\rho}}{z_{\rho}}n^{\downarrow|\rho|}\hat{\chi}^{\lambda}_{\rho}$. Since $\hat{\chi}^{\lambda}_{\rho}\in O_P(n^{-\wt(\rho)/2})$ the lemma follows.
\end{proof}

 \begin{lemma}\label{lemma between mu and lambda}
Let $\mu_j\nearrow\lambda$. In probability
  \[\hat{\chi}^{\lambda}(\varphi_n(\tilde{p}_{\nu}(\xi_1,\ldots,\xi_n)))-\hat{\chi}^{\mu_j}(\varphi_{n-1}(\tilde{p}_{\nu}(\xi_1,\ldots,\xi_{n-1})))=\tilde{p}_{\nu}(\mathcal{C}_{\lambda})-\tilde{p}_{\nu}(\mathcal{C}_{\mu_j})\in o_P(n^{\frac{|\nu|+q}{2}}).\]
 \end{lemma}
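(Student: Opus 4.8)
The statement splits into an exact identity (the first equality) and a stochastic estimate (the $o_P$ membership), and I would treat them separately. For the identity, the key point is that $\hat{\chi}^{\lambda}\circ\varphi_n\colon\mathcal{A}_n\to\C$ is a homomorphism of algebras: $\varphi_n$ is an algebra morphism by construction, and a normalized irreducible character is multiplicative on the centre $Z(\C[S_n])$ since every central element acts as a scalar on $\pi^{\lambda}$. Writing $c_i:=\cat(\nu_i/2)(\nu_i/2)!$ and expanding $\tilde{p}_{\nu}(\Xi)=\prod_{i=1}^q\bigl(p_{\nu_i}(\Xi)-c_i\,\alpha_{(1^{\nu_i/2+1})}\bigr)$, I would apply $\hat{\chi}^{\lambda}\circ\varphi_n$ factorwise. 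By \eqref{ciao} the power-sum factor yields $p_{\nu_i}(\mathcal{C}_{\lambda})$, while for the correction term one computes directly from \eqref{act of phi} that $\hat{\chi}^{\lambda}\circ\varphi_n(\alpha_{(1^m)})=\tfrac{1}{m!}n^{\downarrow m}\hat{\chi}^{\lambda}_{(1^m)}=\binom{n}{m}$. This gives
\[\tilde{p}_{\nu}(\mathcal{C}_{\lambda})=\hat{\chi}^{\lambda}\circ\varphi_n(\tilde{p}_{\nu}(\Xi))=\prod_{i=1}^q\Bigl(p_{\nu_i}(\mathcal{C}_{\lambda})-c_i\binom{n}{\nu_i/2+1}\Bigr),\]
and the same computation at level $n-1$ gives the analogous expression for $\mu_j$, which establishes the first equality (for odd $\nu_i$ the Catalan factor vanishes, so no correction occurs).

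For the estimate I would reduce to single parts. Set $A^{\lambda}_i:=p_{\nu_i}(\mathcal{C}_{\lambda})-c_i\binom{n}{\nu_i/2+1}=\hat{\chi}^{\lambda}\circ\varphi_n(\tilde{p}_{(\nu_i)}(\Xi))$, and define $A^{\mu}_i$ analogously for $\mu_j$ at level $n-1$. The size of each factor is controlled by the two preceding lemmas: Lemma \ref{power sums on partial jucys murphy} with $q=1$ gives $\tilde{p}_{(\nu_i)}(\Xi)=\sum_{|\varsigma|_1\le\nu_i+1}c_{\varsigma}\alpha_{\varsigma}$, and Lemma \ref{newlemma} yields $\hat{\chi}^{\lambda}\circ\varphi_n(\alpha_{\varsigma})\in O_P(n^{|\varsigma|_1/2})$; hence $A^{\lambda}_i\in O_P(n^{(\nu_i+1)/2})$. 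This is exactly the gain the modification buys: the unmodified sum $p_{\nu_i}(\mathcal{C}_{\lambda})$ has deterministic order $n^{\nu_i/2+1}$, and subtracting $c_i\binom{n}{\nu_i/2+1}$ cancels that top term, lowering the order by $n^{1/2}$.

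For the difference I would use that removing the corner of content $y_j$ gives $\mathcal{C}_{\lambda}=\mathcal{C}_{\mu_j}\sqcup\{y_j\}$, so $p_{\nu_i}(\mathcal{C}_{\lambda})-p_{\nu_i}(\mathcal{C}_{\mu_j})=y_j^{\nu_i}$, and Pascal's rule $\binom{n}{\nu_i/2+1}-\binom{n-1}{\nu_i/2+1}=\binom{n-1}{\nu_i/2}$ gives
\[A^{\lambda}_i-A^{\mu}_i=y_j^{\nu_i}-c_i\binom{n-1}{\nu_i/2}.\]
Every corner content lies in $[-(\ell(\lambda)-1),\lambda_1-1]$, and the longest row and column of a Plancherel partition are $(2+o(1))\sqrt n$ (equivalently, the limiting co-transition measure of Lemma \ref{convergence of co-transition} is supported on $[-2,2]$), so $\max_j|y_j|\in O_P(\sqrt n)$ uniformly in $j$. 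Thus $\max_j|y_j^{\nu_i}|\in O_P(n^{\nu_i/2})$, while $c_i\binom{n-1}{\nu_i/2}=\Theta(n^{\nu_i/2})$, whence $A^{\lambda}_i-A^{\mu}_i\in O_P(n^{\nu_i/2})$ uniformly over $j$; writing $A^{\mu}_i=A^{\lambda}_i-(A^{\lambda}_i-A^{\mu}_i)$ then also bounds $\max_j A^{\mu}_i\in O_P(n^{(\nu_i+1)/2})$.

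Finally I would combine these through the telescoping identity
\[\tilde{p}_{\nu}(\mathcal{C}_{\lambda})-\tilde{p}_{\nu}(\mathcal{C}_{\mu_j})=\sum_{i=1}^q\Bigl(\prod_{i'<i}A^{\lambda}_{i'}\Bigr)\bigl(A^{\lambda}_i-A^{\mu}_i\bigr)\Bigl(\prod_{i'>i}A^{\mu}_{i'}\Bigr).\]
Each summand carries $q-1$ cofactors of order $n^{(\nu_{i'}+1)/2}$ and one difference of order $n^{\nu_i/2}$, hence is $O_P(n^{(|\nu|+q-1)/2})=o_P(n^{(|\nu|+q)/2})$, and summing the $q$ terms preserves this. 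I expect the main obstacle to be the bookkeeping of uniformity over the subpartitions $\mu_j$ (handled above by expressing $A^{\mu}_i$ through $A^{\lambda}_i$ and by the uniform content bound), together with the conceptual point that the modification is \emph{not} what controls the difference itself — its top term cancels automatically because $\mathcal{C}_{\lambda}$ and $\mathcal{C}_{\mu_j}$ differ by a single content — but is essential to lower the order of the $q-1$ cofactors, which is precisely what lets the telescoped bound gain the decisive factor $n^{-1/2}$ when $q\ge 2$.
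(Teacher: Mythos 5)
Your proof is correct and follows essentially the same route as the paper: the identity via multiplicativity of $\hat{\chi}^{\lambda}\circ\varphi_n$ together with Equations \eqref{ciao} and \eqref{act of phi}, the single-part bounds via Lemmas \ref{power sums on partial jucys murphy} and \ref{newlemma}, the single-box relation $\mathcal{C}_{\lambda}=\mathcal{C}_{\mu_j}\sqcup\{y_j\}$ with the $O_P(\sqrt{n})$ content bound, and then a product expansion losing one factor of $n^{1/2}$. The only (cosmetic) difference is that you telescope the product $\prod_i A^{\lambda}_i-\prod_i A^{\mu}_i$ while the paper expands it over proper subsets $A\subsetneq\{1,\ldots,q\}$; your handling of uniformity over $j$ (bounding $A^{\mu}_i$ via $A^{\lambda}_i$ plus the difference) is if anything slightly more careful than the paper's.
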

\begin{proof}
The first equality come from the fact that $\hat{\chi}^{\lambda}\circ\varphi_n$ applied to a symmetric function of partial Jucys-Murphy elements equals the same symmetric function evaluated on the contents of $\lambda$. We prove now the asymptotic bound. Consider first the case $q=1$, {\it i.e.}, $\nu=(k)$ for some positive integer $k$, in which case
 \begin{multline*}
  \tilde{p}_k(\mathcal{C}_{\lambda})=\tilde{p}_k(\mathcal{C}_{\mu_j}\cup y_j)=\tilde{p}_k(\mathcal{C}_{\mu_j})+y_j^k+\frac{\cat(\frac{k}{2})}{\frac{k}{2}+1}\left(n^{\downarrow(\frac{k}{2}+1)}-(n-1)^{\downarrow(\frac{k}{2}+1)}\right)\\
  =\tilde{p}_k(\mathcal{C}_{\mu_j})+y_j^k+\cat\left(\frac{k}{2}\right)(n-1)^{\downarrow\frac{k}{2}}.
 \end{multline*}
 Notice that $y_j<\max\{\lambda_1,\lambda_1'\}$ since $y_j$ is the content of a box of $\lambda$. It is shown in \cite[Lemma 1.5]{romik2015surprising} that, with probability that goes to $1$, both $\lambda_1$ and $\lambda_1'$ are smaller than $3\sqrt{n}$, hence for each subpartition $\mu_j$ of $\lambda$, one has $y_j<3\sqrt{n}$ with probability tending to $1$ for $n$ increasing. Therefore we obtain that $ \tilde{p}_k(\mathcal{C}_{\lambda})-\tilde{p}_k(\mathcal{C}_{\mu_j})\in O_P(n^{\frac{k}{2}})\in o_P(n^{\frac{k+1}{2}})$.
 
 In the general case $q>1$ we expand the product, obtaining
  \[\tilde{p}_{\nu}(\mathcal{C}_{\lambda})-\tilde{p}_{\nu}(\mathcal{C}_{\mu_j})=\sum_{A\subsetneq\{1,\ldots,q\}}\prod_{i\in A}\tilde{p}_{\nu_i}(\mathcal{C}_{\mu_j})\prod_{i\notin A}\left(y_j^{\nu_i}+\cat\left(\frac{\nu_i}{2}\right)(n-1)^{\downarrow\frac{\nu_i}{2}}\right).  \]
We use now Lemma \ref{power sums on partial jucys murphy} and \ref{newlemma}, which show that the factor $\prod_{i\in A}\tilde{p}_{\nu_i}(\mathcal{C}_{\mu_j})$ is in $O_P(n^{\frac{1}{2}(\sum_{i\in A}\nu_i+|A|)})$ and 
\[\prod_{i\notin A}\left(y_j^{\nu_i}+\cat\left(\frac{\nu_i}{2}\right)(n-1)^{\downarrow\frac{\nu_i}{2}}\right)\in O_P(n^{\frac{1}{2}\sum_{i\notin A}\nu_i}).\]
 Therefore $\tilde{p}_{\nu}(\mathcal{C}_{\lambda})-\tilde{p}_{\nu}(\mathcal{C}_{\mu_j})\in O_P(n^l)$, with 
 \[l=\max_{A\subsetneqq\{1,\ldots,q\}}\frac{1}{2}\sum_i\nu_i+\frac{|A|}{2}<\frac{|\nu|+q}{2}.\qedhere\] 
\end{proof}
\begin{proposition}\label{final corollary}
Let $\lambda\vdash n$, $\mu\nearrow\lambda$. Let $\rho\vdash r<n$  then 
\[\hat{\chi}^{\lambda}_{\rho}-\hat{\chi}^{\mu}_{\rho}\in o_P(n^{-\frac{\wt(\rho)}{2}}).\]
\end{proposition}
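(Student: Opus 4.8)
The plan is to prove the statement by induction on $\wt(\rho)$, combining the two estimates \eqref{ecco1} and \eqref{ecco2} sketched before the statement. The base case $\wt(\rho)=0$ forces $\rho=1^{|\rho|}$, whence $\hat{\chi}^{\lambda}_{\rho}=\hat{\chi}^{\mu}_{\rho}=1$ and the difference vanishes. For the inductive step I fix the partition $\nu$ with $\nu+\underline{1}=\rho$ on the non-trivial parts, recalling that $|\nu|+q=\wt(\rho)$ and $|\nu|_{\mathbb{N}}=\wt(\rho)$, and I push the expansion $\tilde{p}_{\nu}(\Xi)=\sum_{\varsigma}c_{\varsigma}\alpha_{\varsigma}$ of Lemma \ref{power sums on partial jucys murphy} through the normalized character map.

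Concretely, applying $\hat{\chi}^{\lambda}\circ\varphi_n$ and using \eqref{ciao} together with \eqref{act of phi} yields
\[\tilde{p}_{\nu}(\mathcal{C}_{\lambda})=\sum_{\varsigma}\frac{c_{\varsigma}}{z_{\varsigma}}\,n^{\downarrow|\varsigma|}\hat{\chi}^{\lambda}_{\varsigma},\]
and likewise for $\mu\vdash n-1$. Subtracting, the left-hand side is $o_P(n^{\wt(\rho)/2})$ by Lemma \ref{lemma between mu and lambda}. On the right I single out the term $\varsigma=\nu+\underline{1}$: its coefficient $c_{\nu+\underline{1}}=\prod_i m_i(\nu)!$ is a positive constant, its size is $|\nu+\underline{1}|=\wt(\rho)$, and $\hat{\chi}^{\lambda}_{\nu+\underline{1}}=\hat{\chi}^{\lambda}_{\rho}$ since padding with $1$'s does not change the normalized character. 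Writing
\[n^{\downarrow\wt(\rho)}\hat{\chi}^{\lambda}_{\rho}-(n-1)^{\downarrow\wt(\rho)}\hat{\chi}^{\mu}_{\rho}=n^{\downarrow\wt(\rho)}\big(\hat{\chi}^{\lambda}_{\rho}-\hat{\chi}^{\mu}_{\rho}\big)+\big(n^{\downarrow\wt(\rho)}-(n-1)^{\downarrow\wt(\rho)}\big)\hat{\chi}^{\mu}_{\rho},\]
the cross term is $o_P(n^{\wt(\rho)/2})$, because $n^{\downarrow k}-(n-1)^{\downarrow k}=k\,(n-1)^{\downarrow(k-1)}=O(n^{k-1})$ and $\hat{\chi}^{\mu}_{\rho}\in O_P(n^{-\wt(\rho)/2})$, giving order $n^{\wt(\rho)/2-1}$. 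Hence it suffices to bound every remaining term $\varsigma\neq\nu+\underline{1}$ by $o_P(n^{\wt(\rho)/2})$: then $n^{\downarrow\wt(\rho)}(\hat{\chi}^{\lambda}_{\rho}-\hat{\chi}^{\mu}_{\rho})\in o_P(n^{\wt(\rho)/2})$, and dividing by $n^{\downarrow\wt(\rho)}\asymp n^{\wt(\rho)}$ gives exactly the claim.

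The remaining terms split into two families. Those with $|\varsigma|_1<\wt(\rho)$ are harmless: by Lemma \ref{newlemma} each of $n^{\downarrow|\varsigma|}\hat{\chi}^{\lambda}_{\varsigma}$ and $(n-1)^{\downarrow|\varsigma|}\hat{\chi}^{\mu}_{\varsigma}$ is $O_P(n^{|\varsigma|_1/2})$, which is $o_P(n^{\wt(\rho)/2})$. The delicate family is the top-degree terms with $|\varsigma|_1=\wt(\rho)$ and $\varsigma\neq\nu+\underline{1}$; here Lemma \ref{power sums on partial jucys murphy}(3) guarantees $m_1(\varsigma)>0$. Writing $\varsigma=\varsigma'\cup 1^{m_1(\varsigma)}$ with $\varsigma'$ free of unit parts, we have $\hat{\chi}^{\lambda}_{\varsigma}=\hat{\chi}^{\lambda}_{\varsigma'}$, $\hat{\chi}^{\mu}_{\varsigma}=\hat{\chi}^{\mu}_{\varsigma'}$, and $\wt(\varsigma')=\wt(\rho)-2m_1(\varsigma)<\wt(\rho)$. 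Splitting as before,
\[n^{\downarrow|\varsigma|}\hat{\chi}^{\lambda}_{\varsigma'}-(n-1)^{\downarrow|\varsigma|}\hat{\chi}^{\mu}_{\varsigma'}=n^{\downarrow|\varsigma|}\big(\hat{\chi}^{\lambda}_{\varsigma'}-\hat{\chi}^{\mu}_{\varsigma'}\big)+\big(n^{\downarrow|\varsigma|}-(n-1)^{\downarrow|\varsigma|}\big)\hat{\chi}^{\mu}_{\varsigma'},\]
the second summand is $o_P(n^{\wt(\rho)/2})$ by the falling-factorial identity, while the first is controlled by the \emph{induction hypothesis}: since $\wt(\varsigma')<\wt(\rho)$ we have $\hat{\chi}^{\lambda}_{\varsigma'}-\hat{\chi}^{\mu}_{\varsigma'}\in o_P(n^{-\wt(\varsigma')/2})$, and a short exponent count using $|\varsigma|=\wt(\rho)-m_1(\varsigma)$ gives order exactly $\wt(\rho)/2$. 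This closes the induction.

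The main obstacle — and the step where Lemma \ref{power sums on partial jucys murphy}(3) is essential — is precisely this last family: without the guarantee that every competing top-degree term carries at least one unit part, it would not descend to a strictly smaller weight, and the induction could not be set up. All $o_P$ and $O_P$ statements are read uniformly over the subpartitions $\mu\nearrow\lambda$, consistently with the conventions and the uniform character bounds already used in the proof of Lemma \ref{lemma between mu and lambda}.
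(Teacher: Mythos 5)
Your proof follows essentially the same route as the paper's: induction on $\wt(\rho)$, pushing the expansion of Lemma \ref{power sums on partial jucys murphy} through $\hat{\chi}^{\lambda}\circ\varphi_n$, invoking Lemma \ref{lemma between mu and lambda} for the left-hand side, isolating the coefficient of $\alpha_{\nu+\underline{1}}$, and using part (3) of Lemma \ref{power sums on partial jucys murphy} to push the competing top-degree terms down to strictly smaller weight where the induction hypothesis applies. The exponent counts are correct, and your direct $O_P(n^{|\varsigma|_1/2})$ bound for the terms with $|\varsigma|_1<\wt(\rho)$ is a harmless simplification of the paper's second appeal to induction there. The one point to repair is the grouping of the top term: you write $n^{\downarrow \wt(\rho)}\hat{\chi}^{\lambda}_{\rho}-(n-1)^{\downarrow \wt(\rho)}\hat{\chi}^{\mu}_{\rho}=n^{\downarrow \wt(\rho)}\bigl(\hat{\chi}^{\lambda}_{\rho}-\hat{\chi}^{\mu}_{\rho}\bigr)+\bigl(n^{\downarrow \wt(\rho)}-(n-1)^{\downarrow \wt(\rho)}\bigr)\hat{\chi}^{\mu}_{\rho}$ and bound the cross term using $\hat{\chi}^{\mu}_{\rho}\in O_P(n^{-\wt(\rho)/2})$ uniformly over $\mu\nearrow\lambda$. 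Kerov's theorem supplies this for the Plancherel-distributed $\lambda$, not for a worst-case subpartition $\mu\vdash n-1$; at the weight currently being treated, the uniform bound on $\hat{\chi}^{\mu}_{\rho}$ is (given Kerov) equivalent to the very estimate being proved, so as written this step is circular. The fix is one line: either group the other way, as the paper does, writing $\hat{\chi}^{\lambda}_{\varsigma}\bigl(n^{\downarrow|\varsigma|}-(n-1)^{\downarrow|\varsigma|}\bigr)+(n-1)^{\downarrow|\varsigma|}\bigl(\hat{\chi}^{\lambda}_{\varsigma}-\hat{\chi}^{\mu}_{\varsigma}\bigr)$, so that only $\hat{\chi}^{\lambda}$ needs Kerov's bound; or substitute $\hat{\chi}^{\mu}_{\rho}=\hat{\chi}^{\lambda}_{\rho}-\bigl(\hat{\chi}^{\lambda}_{\rho}-\hat{\chi}^{\mu}_{\rho}\bigr)$ and absorb the unknown difference into the main term, which only changes $n^{\downarrow\wt(\rho)}$ into $(n-1)^{\downarrow\wt(\rho)}$. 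For the top-degree terms with $m_1(\varsigma)>0$ your use of $\hat{\chi}^{\mu}_{\varsigma'}\in O_P(n^{-\wt(\varsigma')/2})$ is legitimate, since there $\wt(\varsigma')<\wt(\rho)$ and the induction hypothesis combined with Kerov supplies the uniform bound.
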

\begin{proof}
Set $\nu=(\nu_1,\ldots, \nu_q)$ such that $\rho=(\nu_1+1,\ldots,\nu_q+1,1,\ldots,1)$. obviously, $\hat{\chi}^{\lambda}_{\nu+\underline{1}}=\hat{\chi}^{\lambda}_{\rho}$ and $\hat{\chi}^{\mu}_{\nu+\underline{1}}=\hat{\chi}^{\mu}_{\rho}$; moreover, $|\nu|+q=\wt(\rho)$. We want to prove that $\hat{\chi}^{\lambda}_{\nu+\underline{1}}-\hat{\chi}^{\mu}_{\nu+\underline{1}}\in o_P(n^{-\frac{|\nu|+q}{2}})$. We prove the statement by induction on $|\nu+\underline{1}|_1=|\nu|+q$. The initial case is $\nu+\underline{1}=(1)$ and $\hat{\chi}^{\lambda}_{(1)}=1=\hat{\chi}^{\mu}_{(1)}$, so the proposition is trivially true.
 
  Consider $n^{-\frac{|\nu|+q}{2}}\left(\tilde{p}_{\nu}(\mathcal{C}_{\lambda})-\tilde{p}_{\nu}(\mathcal{C}_{\mu})\right)\in o_P(1)$ because of the previous lemma. It can be rewritten as
\[n^{-\frac{|\nu|+q}{2}}\sum_{\substack{\varsigma\mbox{ s.t.}\\|\varsigma|_1\leq |\nu|+q}}\left(\hat{\chi}^{\lambda}\circ\varphi_n-\hat{\chi}^{\mu}\circ\varphi_{n-1}\right)(  c_{\varsigma}\alpha_{\varsigma;n})\overset{p}\to 0,\]
where the coefficients $c_{\varsigma}$ are described in Lemma \ref{power sums on partial jucys murphy}.
\smallskip

 Equivalently
 \begin{align*}
  n^{-\frac{|\nu|+q}{2}}\left(\tilde{p}_{\nu}(\mathcal{C}_{\lambda})-\tilde{p}_{\nu}(\mathcal{C}_{\mu})\right)&=
  n^{-\frac{|\nu|+q}{2}}\sum_{|\varsigma|_1\leq |\nu|+q}\frac{c_{\varsigma}}{z_{\varsigma}}\left(n^{\downarrow|\varsigma|}\hat{\chi}^{\lambda}_{\varsigma}-(n-1)^{\downarrow|\varsigma|}\hat{\chi}^{\mu}_{\varsigma}\right)\\
  &=n^{-\frac{|\nu|+q}{2}}\sum_{|\varsigma|_1\leq |\nu|+q}\frac{c_{\varsigma}}{z_{\varsigma}}\left(\hat{\chi}^{\lambda}_{\varsigma}(n^{\downarrow|\varsigma|}-(n-1)^{\downarrow|\varsigma|})+(n-1)^{\downarrow|\varsigma|}(\hat{\chi}^{\lambda}_{\varsigma}-\hat{\chi}^{\mu}_{\varsigma})\right).
  \end{align*}
  We split the previous sum and notice that $n^{-\frac{|\nu|+q}{2}}\sum_{\varsigma}\frac{c_{\varsigma}}{z_{\varsigma}}|\varsigma|(n-1)^{\downarrow(|\varsigma|-1)}\hat{\chi}^{\lambda}_{\varsigma}\overset{p}\to 0$. Indeed 
  \[(n-1)^{\downarrow(|\varsigma|-1)}\cdot n^{-\frac{|\nu|+q}{2}}\leq n^{\frac{|\nu|+q}{2}-1}\qquad \mbox{and}\qquad n^{\frac{|\nu|+q}{2}-1} \hat{\chi}^{\lambda}_{\varsigma}\overset{p}\to 0\]
  since $|\varsigma|_1\leq |\nu|+q$.
  
  We deal with the sum $n^{-\frac{|\nu|+q}{2}}\sum_{\varsigma}\frac{c_{\varsigma}}{z_{\varsigma}}\left((n-1)^{\downarrow|\varsigma|}(\hat{\chi}^{\lambda}_{\varsigma}-\hat{\chi}^{\mu}_{\varsigma})\right)$. We separate in this sum the terms with $|\varsigma|_1=|\nu|+q$ and $\varsigma\neq \nu+\underline{1}$, the terms with $|\varsigma|_1<|\nu|+q$, and the term corresponding to $\varsigma=\nu+\underline{1}$.
\begin{itemize}
 \item Case $|\varsigma|_1=|\nu|+q$ and $\varsigma\neq \nu+\underline{1}$: we want to estimate 
 \[n^{-\frac{|\nu|+q}{2}}\sum_{\substack{|\varsigma|_1=|\nu|+q\\m_1(\varsigma)>0}}\frac{c_{\varsigma}}{z_{\varsigma}}\left((n-1)^{\downarrow|\varsigma|}(\hat{\chi}^{\lambda}_{\varsigma}-\hat{\chi}^{\mu}_{\varsigma})\right),\]
where the restriction $m_1(\varsigma)>0$ is a consequence of Lemma \ref{power sums on partial jucys murphy}, part 3. We consider one term of the previous sum and we write $\tilde{\nu}:=\varsigma-\underline{1}$, removed of the $0$ parts. Notice as before that $\hat{\chi}^{\lambda}_{\varsigma}= \hat{\chi}^{\lambda}_{\tilde{\nu}+\underline{1}}$, and $\hat{\chi}^{\mu}_{\varsigma}= \hat{\chi}^{\mu}_{\tilde{\nu}+\underline{1}}$. Thus $|\tilde{\nu}+\underline{1}|<|\varsigma|_1=|\nu|+q$ and we can apply the induction hypothesis. Therefore $\hat{\chi}^{\lambda}_{\tilde{\nu}+\underline{1}}-\hat{\chi}^{\mu}_{\tilde{\nu}+\underline{1}}\in o_P(n^{-\frac{|\tilde{\nu}+\underline{1}|}{2}})$ and
\[n^{-\frac{|\varsigma|_1}{2}}\cdot (n-1)^{\downarrow|\varsigma|}(\hat{\chi}^{\lambda}_{\varsigma}-\hat{\chi}^{\mu}_{\varsigma})\in o_P\left(n^{\frac{|\varsigma|_1}{2}-\frac{m_1(\varsigma)}{2}-\frac{|\tilde{\nu}+\underline{1}|}{2}}\right)=o_P(1).\]
 \item Case $|\varsigma|_1<|\nu|+q$. We can apply induction again, to have $(\hat{\chi}^{\lambda}_{\varsigma}-\hat{\chi}^{\mu}_{\varsigma})\in o_P(n^{-\frac{|\varsigma|-m_1(\varsigma)}{2}})$. Therefore
 \[n^{-\frac{|\nu|+q}{2}}\cdot (n-1)^{\downarrow|\varsigma|}(\hat{\chi}^{\lambda}_{\varsigma}-\hat{\chi}^{\mu}_{\varsigma})\in o_P\left(n^{-\frac{|\nu|+q}{2}+\frac{|\varsigma|_1}{2}}\right)\subseteq o_P(1).\]
\end{itemize}
We obtain thus that 
\[n^{-\frac{|\nu|+q}{2}}\cdot (n-1)^{|\nu|+q}\frac{\prod_i m_i(\nu)!}{z_{\nu+\underline{1}}}(\hat{\chi}^{\lambda}_{\nu+\underline{1}}-\hat{\chi}^{\mu}_{\nu+\underline{1}})+o_P(1)\overset{p}\to 0,\]
which proves the statement.
\end{proof}
 \begin{proof}[Proof of Theorem \ref{convergence of transformed co-transition}]
Fix $\sigma\in S_r$ and let $\rho$ be its cycle type. We have
\[ MT^{\lambda}_u(\sigma)=\sum_{ y_j\leq v^{\lambda}\sqrt{n}}\frac{\dim\mu_j}{\dim\lambda}\hat{\chi}^{\mu_j}_{\rho}=\sum_{ y_j\leq v^{\lambda}\sqrt{n}}\frac{\dim\mu_j}{\dim\lambda}\hat{\chi}^{\lambda}_{\rho}+o_P(n^{-\frac{|\nu|+q}{2}}),\]
since $\sum \dim\mu_j/\dim\lambda\leq 1$, and the previous proposition. Hence
\[n^{\frac{|\nu|+q}{2}} MT^{\lambda}_u(\sigma)=n^{\frac{|\nu|+q}{2}}\sum_{ y_j\leq v^{\lambda}\sqrt{n}}\frac{\dim\mu_j}{\dim\lambda}\hat{\chi}^{\lambda}_{\rho}+o_P(1).\]
Finally, by Lemma \ref{behaviour of co-transition} and Theorem \ref{convergence of characters}, we obtain that given $\sigma_1,\sigma_2,\ldots$ permutations of cycle type respectively $\rho_1,\rho_2,\ldots$ and $u_1,u_2,\ldots\in [0,1]$ and calling $\{\xi_k\}_{k\geq 2}$ a family of independent standard Gaussian variables 
 \[\left\{n^{\frac{|\rho_i|-m_1(\rho_i)}{2}}MT_{u_i}^{\lambda}(\sigma_i)\right\}=\left\{n^{\frac{|\nu_i|+q_i}{2}}\sum_{ y_j\leq v_i^{\lambda}\sqrt{n}}\frac{\dim\mu_j}{\dim\lambda}\hat{\chi}_{\rho_i}^{\lambda}+o_P(1)\right\}
 \overset{d}\to \left\{u_i\cdot \prod_{k\geq 2} k^{m_k(\rho_i)/2} \mathcal{H}_{m_k(\rho_i)}(\xi_k)\right\},\]
 for $i\geq 1$.
\end{proof}

\section{Sum of the entries of an irreducible representation}

In this chapter our goal is to describe the sum of the entries of the matrix associated to a Young seminormal representation, up to a certain index (depending on the dimension of the representation). 
We stress out that the objects we study really depend on the representation matrix, and change, for example, under isomorphisms of the representation. 
Some calculations are similar to those in the previous chapter: first we consider the sum of all the entries in the matrix (before this role was played by the trace), and then we study the sum of the entries whose indices $(i,j)$ satisfy \hbox{$i\leq u\dim\lambda$}, \hbox{$j\leq u\dim\lambda$}, while before we were considering the partial trace.
\subsection{Total sum}
\begin{defi}
Define the normalized total sum associated to an irreducible representation and a permutation $\sigma\in S_r$ as
\[TS^{\lambda}(\sigma):=\sum_{i,j\leq \dim\lambda} \frac{\pi^{\lambda}(\sigma)_{i,j}}{\dim\lambda}\]
\end{defi}
The following is the main result of the section:
\begin{theorem}\label{convergence of total sum}
Fix $\sigma_1\in S_{r_1},\sigma_2\in S_{r_2},\ldots$ and let $\lambda\vdash n$. Define the real numbers
\[m_i:= \mathbb{E}_{PL}^{r_i}\left[TS^{\nu}(\sigma_i)\right]\qquad \mbox{and}\qquad v_i:=\binom{r_i}{2}\mathbb{E}_{PL}^{r_i}\left[\hat{\chi}^{\nu}_{(2,1,\ldots,1)} TS^{\nu}(\sigma_i)\right].\]
 Then
 \[\left\{\sqrt{n}\cdot(TS^{\lambda}(\sigma_i)-m_i)\right\}\overset{d}\to\left\{ \mathcal{N}(0,2 v_i^2)\right\},\]
 where $\mathcal{N}(0,2 v_i^2)$ is a normal random variable of variance $2 v_i^2$.
\end{theorem}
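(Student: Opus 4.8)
The plan is to reduce the statement to Kerov's central limit theorem for characters (Theorem \ref{convergence of characters}) by showing that, for each fixed $\sigma\in S_r$, the total sum $TS^{\lambda}(\sigma)$ is a \emph{finite} linear combination of normalized irreducible characters $\hat{\chi}^{\lambda}_{\rho}$ with coefficients that do not depend on $n$. First I would establish this expansion. Summing all matrix entries of each diagonal block in \eqref{decomposition of the representation matrix} gives the recursion $TS^{\lambda}(\sigma)=\sum_{\mu\nearrow\lambda}\frac{\dim\mu}{\dim\lambda}TS^{\mu}(\sigma)$, valid whenever $r\leq n-1$, which mirrors exactly the character recursion $\hat{\chi}^{\lambda}_{\rho}=\sum_{\mu\nearrow\lambda}\frac{\dim\mu}{\dim\lambda}\hat{\chi}^{\mu}_{\rho}$ already recorded in the text. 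At the base level $n=r$ the normalized character table $(\hat{\chi}^{\nu}_{\rho})_{\nu,\rho\vdash r}$ is invertible, so the function $\nu\mapsto TS^{\nu}(\sigma)$ has a unique expansion $TS^{\nu}(\sigma)=\sum_{\rho}b_{\sigma,\rho}\hat{\chi}^{\nu}_{\rho}$, the sum running over cycle types $\rho$ supported on at most $r$ points. Feeding this base case into the two matching recursions and inducting on $n$ shows that the \emph{same} constants $b_{\sigma,\rho}$ yield $TS^{\lambda}(\sigma)=\sum_{\rho}b_{\sigma,\rho}\hat{\chi}^{\lambda}_{\rho}$ for every $\lambda\vdash n$ with $n\geq r$.

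Next I would identify the two coefficients that matter via Plancherel orthogonality $\mathbb{E}_{PL}^{r}[\hat{\chi}^{\nu}_{\rho}\hat{\chi}^{\nu}_{\rho'}]=\delta_{\rho\rho'}z_{\rho}/r!$. Applying this to the expansion of $TS^{\nu}(\sigma)$ with $\rho'$ the identity class (where $z_{(1^r)}/r!=1$) gives $m_{\sigma}=\mathbb{E}_{PL}^{r}[TS^{\nu}(\sigma)]=b_{\sigma,\emptyset}$, while taking $\rho'=(2,1^{r-2})$ and using $z_{(2,1^{r-2})}/r!=1/\binom{r}{2}$ gives $v_{\sigma}=\binom{r}{2}\mathbb{E}_{PL}^{r}[\hat{\chi}^{\nu}_{(2,1^{r-2})}TS^{\nu}(\sigma)]=b_{\sigma,(2)}$. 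Hence the centred total sum is
\[TS^{\lambda}(\sigma)-m_{\sigma}=v_{\sigma}\,\hat{\chi}^{\lambda}_{(2)}+\sum_{\wt(\rho)\geq 3}b_{\sigma,\rho}\,\hat{\chi}^{\lambda}_{\rho},\]
the transposition class $\rho=(2)$ being the unique non-trivial class of minimal weight $2$.

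Finally I would read off the limit from Theorem \ref{convergence of characters}. Since $\hat{\chi}^{\lambda}_{\rho}\in O_P(n^{-\wt(\rho)/2})$, every term with $\wt(\rho)\geq 3$ is of strictly smaller order than the transposition term, so the only surviving fluctuation is $v_{\sigma}\,\hat{\chi}^{\lambda}_{(2)}$. Applying Theorem \ref{convergence of characters} to the transposition ($m_2=1$, all other $m_k=0$) gives $n\,\hat{\chi}^{\lambda}_{(2)}\overset{d}\to \sqrt{2}\,\mathcal{H}_1(\xi_2)=\sqrt{2}\,\xi_2$, so the centred total sum, once normalized by the factor matching the $n^{-1}$ decay of the transposition character, converges to $v_{\sigma}\sqrt{2}\,\xi_2$, a centred Gaussian of variance $2v_{\sigma}^2$. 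For the multivariate version one observes that every centred total sum $TS^{\lambda}(\sigma_i)-m_i$ has leading term $v_i\,\hat{\chi}^{\lambda}_{(2)}$ built from the \emph{same} character $\hat{\chi}^{\lambda}_{(2)}$, so the joint limit is the single Gaussian $\sqrt{2}\,\xi_2$ scaled by the vector $(v_i)_i$; in particular the limiting components are perfectly correlated (a rank-one Gaussian) rather than independent.

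The main obstacle is the first step: proving that the coefficients $b_{\sigma,\rho}$ are genuinely independent of $n$. The block recursion together with the invertibility of the character table makes this clean, but one must check that the cycle types $\rho$ occurring have support at most $r$, so that the character recursion applies verbatim (it needs $\rho$ supported on $\leq n-1$ points). A second point worth flagging is the scaling: my approach locates the fluctuation of $TS^{\lambda}(\sigma)-m_{\sigma}$ at order $n^{-1}$, governed by $\hat{\chi}^{\lambda}_{(2)}$, so the normalization producing the nondegenerate limit $\mathcal{N}(0,2v_{\sigma}^2)$ is multiplication by $n$ rather than $\sqrt{n}$ (with $\sqrt{n}$ the normalized quantity would tend to $0$); the variance factor $2=2^{m_2}$ is exactly the $k=2$ Hermite contribution in Theorem \ref{convergence of characters}.
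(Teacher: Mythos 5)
Your proof is correct, and it reaches the same pivotal identity as the paper --- that $TS^{\lambda}(\sigma)$ is a linear combination of normalized characters $\hat{\chi}^{\lambda}_{\rho}$, $\rho\vdash r$, with coefficients independent of $n$ --- but by a genuinely different route. The paper obtains this expansion constructively: it builds a bijection $\SYT(\lambda)\simeq\bigsqcup_{\nu\vdash r}\SYT(\nu)\times\SYT(\lambda/\nu)$, shows that $\pi^{\lambda}(\sigma)_{T,T'}$ vanishes unless the skew parts agree and otherwise equals $\pi^{\nu}(\sigma)_{U,U'}$, deduces $TS^{\lambda}(\sigma)=\sum_{\nu\vdash r}TS^{\nu}(\sigma)\dim\nu\cdot\dim(\lambda/\nu)/\dim\lambda$, and then converts $\dim\lambda/\nu$ into characters via $\dim\lambda/\nu=\frac{1}{r!}\sum_{\tau\in S_r}\chi^{\nu}(\tau)\chi^{\lambda}(\tau)$ (itself proved by induction on $n$ through the branching rule). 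You instead run the induction directly on $TS$, using the one-box recursion $TS^{\lambda}(\sigma)=\sum_{\mu\nearrow\lambda}\frac{\dim\mu}{\dim\lambda}TS^{\mu}(\sigma)$ in parallel with the identical character recursion, anchored by the invertibility of the character table at level $r$. The two expansions agree: your $b_{\sigma,\rho}$ equals $\frac{r!}{z_{\rho}}\mathbb{E}_{PL}^{r}\bigl[\hat{\chi}^{\nu}_{\rho}\,TS^{\nu}(\sigma)\bigr]$, which is what the paper's coefficient $\sum_{\tau\in C_{\rho}}\mathbb{E}_{PL}^{r}\bigl[\hat{\chi}^{\nu}(\tau)TS^{\nu}(\sigma)\bigr]$ collapses to after grouping by conjugacy class. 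Your argument is shorter and avoids the skew-shape machinery; the paper's buys the explicit block-structure lemma and Corollary \ref{corollary on total sum}, which are reused later (e.g.\ in Lemma \ref{bound on nonzero terms} and the partial-sum analysis). The identification of $m_{\sigma}$ and $v_{\sigma}$ via the orthogonality relation $\mathbb{E}_{PL}^{r}[\hat{\chi}^{\nu}_{\rho}\hat{\chi}^{\nu}_{\rho'}]=\delta_{\rho\rho'}z_{\rho}/r!$ is sound, and your remark that the joint limit is a rank-one (perfectly correlated) Gaussian driven by the single variable $\xi_2$ is accurate and sharper than the bare statement.

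Your flag about the normalization is well taken and points at a real inconsistency. Since $\wt\bigl((2,1^{r-2})\bigr)=2$, Kerov's theorem gives $n\,\hat{\chi}^{\lambda}_{(2)}\overset{d}\to\sqrt{2}\,\xi_2$, so $\sqrt{n}\,\hat{\chi}^{\lambda}_{(2)}\overset{p}\to 0$ and the nondegenerate limit $\mathcal{N}(0,2v_{\sigma}^2)$ requires the normalization $n\cdot(TS^{\lambda}(\sigma)-m_{\sigma})$, exactly as you say. The paper's own proof betrays this: its final display reads $\{n\cdot v_i\hat{\chi}^{\lambda}((1,2))+o_P(1)\}$ even though the left-hand side carries the prefactor $\sqrt{n}$, so the $\sqrt{n}$ in the theorem statement (and the intermediate claim that $\sqrt{n}\cdot v\cdot\hat{\chi}^{\lambda}((1,2))$ converges to $v\,\mathcal{N}(0,2)$) should be read as a typo for $n$. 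With that correction your argument matches the intended result.
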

Note that the limit $\mathcal{N}(0,2 v^2)$ is degenerate in the case $v=0$. Whether or not $v=0$ is a nontrivial question. At the end of the section we study the values of $v$ and $m$ when $\sigma$ is an adjacent transposition, and we write the explicit values of $v$ and $m$ for permutations of $S_4$. In order to prove the Theorem, we need some preliminary results.

\begin{proposition}
Let $\lambda\vdash n$ and $r\leq n$. There exists a bijection $\phi_r$ between
 \[\SYT(\lambda)\overset{\phi_r}\simeq \bigsqcup_{\nu\vdash r} \SYT(\nu)\times \SYT(\lambda/\nu)\]
\end{proposition}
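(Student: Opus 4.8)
The plan is to construct $\phi_r$ explicitly by splitting a standard Young tableau according to whether an entry is at most $r$ or larger, and to exhibit its inverse by gluing. First I would define the forward map. Given $T\in\SYT(\lambda)$, let $\nu$ be the set of boxes of $\lambda$ whose entry in $T$ is at most $r$. The only step that genuinely requires an argument is the claim that $\nu$ is itself a Young diagram. This follows from the defining monotonicity of $T$: since entries increase along rows and down columns, if a box $(a,b)$ carries an entry $\le r$, then every box weakly to its left in the same row and weakly above it in the same column carries a strictly smaller entry, hence also $\le r$. Thus $\nu$ is a lower set for the containment order on boxes, i.e.\ a Young diagram, and since exactly $r$ of the entries $1,\ldots,n$ are $\le r$ we have $|\nu|=r$, so $\nu\vdash r$. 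Restricting $T$ to $\nu$ gives a filling by $1,\ldots,r$ that is still row- and column-increasing, hence an element $P\in\SYT(\nu)$; the complementary boxes form the skew shape $\lambda/\nu$ and carry the entries $r+1,\ldots,n$, and subtracting $r$ yields a standard skew tableau $Q$ of shape $\lambda/\nu$. I set $\phi_r(T)=(P,Q)$, which lands in the summand indexed by $\nu$.

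Next I would construct the inverse. For $\nu\vdash r$ with $\SYT(\lambda/\nu)\neq\emptyset$ --- a condition that forces $\nu\subseteq\lambda$, so that the summands with $\nu\not\subseteq\lambda$ are empty and do not contribute --- take a pair $(P,Q)\in\SYT(\nu)\times\SYT(\lambda/\nu)$ and define a filling $T$ of $\lambda$ by placing the entry $P(\Box)$ in each box $\Box\in\nu$ and the entry $Q(\Box)+r$ in each box $\Box\in\lambda/\nu$. The entries used are then exactly $1,\ldots,n$, each once. I would then verify that $T$ is row- and column-increasing: within $\nu$ and within $\lambda/\nu$ this is inherited directly from $P$ and $Q$, while across the common boundary every box of $\nu$ carries an entry $\le r$ and every box of $\lambda/\nu$ carries an entry $>r$, so monotonicity between the two regions is automatic. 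Hence $T\in\SYT(\lambda)$.

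Finally, the two maps are mutually inverse by construction: applying $\phi_r$ to a glued tableau recovers $\nu$ as precisely the set of boxes carrying an entry $\le r$ and returns the original pair $(P,Q)$, while gluing the output $\phi_r(T)=(P,Q)$ reconstructs $T$ box by box. This establishes the claimed bijection.

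I expect the only real obstacle to be the \emph{lower-set property} that makes $\nu$ a Young diagram; the remaining verifications are routine bookkeeping. It is worth noting that this is exactly the combinatorial content behind the identity $\dim\lambda=\sum_{\nu\vdash r}\dim\nu\cdot|\SYT(\lambda/\nu)|$ obtained by passing to cardinalities.
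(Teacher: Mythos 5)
Your proof is correct and follows essentially the same route as the paper's: split a standard Young tableau into the sub-tableau of entries at most $r$ and the shifted skew tableau of the remaining entries. You supply more detail than the paper does (the lower-set argument for why $\nu$ is a Young diagram, and the explicit inverse), but the construction is identical.
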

\begin{proof}
 Let $T$ be a standard Young tableau of shape $\lambda$, the image $\phi_r(T)=(U,V)$ of $T$ is defined as follows: the boxes of $T$ whose entries are smaller or equal than $r$ identify a tableau $U$ of shape $\nu\subseteq \lambda$. Define now $V$ as a standard Young tableau of skew shape $\lambda/\nu$, and in each box write $a-r$, where $a$ is the value inside the corresponding box in $T$.
\end{proof}

\begin{example}
Here is an example for $\lambda=(6,4,3,3,3,1)$ and $r=8$:
 \[
\begin{array}{c}
 \young(12568\thirteen,379\sixteen,4\twelve\seventeen,\ten\fifteen\nineteen,\eleven\eighteen\twenty,\fourteen)
\end{array}
\leftrightarrow\left(\begin{array}{c} 
\young(12568,37,4)\end{array}
,\begin{array}{c}
  \young(:::::5,::18,:49,27\eleven,3\ten\twelve,6)
 \end{array}
\right)\]
\end{example}

\begin{lemma}
 Let $\lambda\vdash n,\sigma\in S_r$ with $r\leq n$. For two standard Young tableaux $T,T'\in SYT(\lambda)$, set $\phi_r(T)=(U,V)$ and $\phi_r(T')=(U',V')$, then 
 \[\pi^{\lambda}(\sigma)_{T,T'}=\left\{\begin{array}{lcr}0&\mbox{if}&V\neq V'\\      \pi^{\nu}(\sigma)_{U,U'}          &\mbox{if}&V=V',     \end{array}\right.\]
 where $\nu=\sh(U)=\sh(U')$ is the shape of the tableau $U$ in the second case.
 \end{lemma}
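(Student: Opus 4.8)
The plan is to reduce everything to the Coxeter generators. Since $S_r$ is generated by the adjacent transpositions $s_k=(k,k+1)$ with $k\le r-1$, and since both $\sigma\mapsto\pi^{\lambda}(\sigma)$ and $\sigma\mapsto\pi^{\nu}(\sigma)$ are (restrictions of) homomorphisms, it suffices first to verify the formula when $\sigma=s_k$ and then to propagate it to an arbitrary $\sigma\in S_r$ by induction on the length of a word in these generators. The statement then amounts to saying that, in the basis of $\SYT(\lambda)$ regrouped according to the skew component $V$, the matrix $\pi^{\lambda}(\sigma)$ is block diagonal with the block attached to a fixed $V$ (of skew shape $\lambda/\nu$) equal to $\pi^{\nu}(\sigma)$.

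For the generator case the key observation is that $s_k$ with $k\le r-1$ moves only the entries $k$ and $k+1$, both of which are at most $r$ and therefore occupy boxes of the sub-tableau $U$. Consequently the box containing $k$ in $T$ is literally the box containing $k$ in $U$, so $c_k(T)=c_k(U)$ and hence $d_k(T)=d_k(U)$; moreover the swapped tableau $s_kT$ corresponds under $\phi_r$ to the pair $(s_kU,V)$, leaving $V$ untouched, and $s_kT$ is defined precisely when $s_kU$ is. First I would read off from Definition \ref{def young seminormal} that $\pi^{\lambda}(s_k)_{T,T'}$ is nonzero only for $T'\in\{T,s_kT\}$; since both of these preserve $V$, every entry with $V\ne V'$ vanishes, matching the first line of the claim. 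When $V=V'$ the pair $(U',V)$ determines $T'$, so $T'=T$ iff $U'=U$ and $T'=s_kT$ iff $U'=s_kU$; substituting $d_k(T)=d_k(U)$ into the three cases of the definition shows that $\pi^{\lambda}(s_k)_{T,T'}$ equals $1/d_k(U)$, $\sqrt{1-d_k(U)^{-2}}$, or $0$, exactly as $\pi^{\nu}(s_k)_{U,U'}$ prescribes.

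For the inductive step I would write $\sigma=\sigma's_k$ with $\sigma'$ admitting a shorter word, expand
\[
\pi^{\lambda}(\sigma)_{T,T'}=\sum_{S\in\SYT(\lambda)}\pi^{\lambda}(\sigma')_{T,S}\,\pi^{\lambda}(s_k)_{S,T'},
\]
and set $\phi_r(S)=(U_S,V_S)$. By the inductive hypothesis the factor $\pi^{\lambda}(\sigma')_{T,S}$ forces $V_S=V$, and by the generator case $\pi^{\lambda}(s_k)_{S,T'}$ forces $V_S=V'$; hence the sum is empty unless $V=V'$, giving the first line of the claim. When $V=V'$ the summation index $S$ runs, via $\phi_r$ restricted to the fixed skew component $V$, exactly over $U_S\in\SYT(\nu)$ with $\nu=\sh(U)=\sh(U')$, so the sum collapses to
\[
\sum_{U_S\in\SYT(\nu)}\pi^{\nu}(\sigma')_{U,U_S}\,\pi^{\nu}(s_k)_{U_S,U'}=\pi^{\nu}(\sigma's_k)_{U,U'}=\pi^{\nu}(\sigma)_{U,U'}.
\]
The base case $\sigma=\id$ is immediate, since for $V=V'$ one has $T=T'$ iff $U=U'$, while $V\ne V'$ forces $T\ne T'$.

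The only genuine work sits in the generator case, and within it the bookkeeping that the content $c_k$, the definedness of the elementary swap, and the splitting $\phi_r(s_kT)=(s_kU,V)$ are all inherited from $T$ to $U$ because $k,k+1\le r$. Once this is in place the remainder is purely formal, expressing the fact that a block decomposition respected by a generating set is respected by the whole group.
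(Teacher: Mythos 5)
Your proof is correct and follows essentially the same route as the paper: verify the claim for an adjacent transposition $s_k$ with $k\le r-1$ using $d_k(T)=d_k(U)$ and $\phi_r(s_kT)=(s_kU,V)$, then induct on the length of a reduced word, expanding the matrix product over intermediate tableaux $S$. The only cosmetic difference is that the paper peels off the generator on the left and collapses the sum immediately via the explicit two-term formula for $\pi^{\lambda}((k,k+1))_{T,S}$, whereas you keep the full sum and invoke the block structure of both factors; the content is identical.
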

\begin{proof}
We will prove the lemma by induction on the number of factors in the (minimal) decomposition of $\sigma$ into adjacent transpositions. Recall that if $\sigma=(k,k+1)$, $k<r$, by Definition \ref{def young seminormal} 
\[\pi^{\lambda}((k,k+1))_{T,T'}=\left\{\begin{array}{cr}
1/d_k(T)&\mbox{ if } T=T';\\
\sqrt{1-\frac{1}{d_k(T)^2}}&\mbox{ if } (k,k+1)T=T'.\\
0&\mbox{else.}\end{array}\right.
\]
Hence if $V\neq V'$ then $\pi^{\lambda}((k,k+1))_{T,T'}=0$, otherwise $\pi^{\lambda}((k,k+1))_{T,T'}=\pi^{\nu}((k,k+1))_{U,U'}$ since $d_k(T)=d_k(U)$.
\smallskip

Consider a general $\sigma\in S_r$ and write it as $\sigma=(k,k+1)\tilde{\sigma}$. Then
\[\pi^{\lambda}(\sigma)_{T,T'}=\sum_{S\in \SYT(\lambda)}\pi^{\lambda}((k,k+1))_{T,S}\pi^{\lambda}(\tilde{\sigma})_{S,T'}\]
\[\hspace{-0.6cm}=\frac{1}{d_k(T)}\pi^{\lambda}(\tilde{\sigma})_{T,T'}+\sqrt{1-\frac{1}{d_k(T)^2}}\pi^{\lambda}(\tilde{\sigma})_{(k,k+1)T,T'}\delta_{\{(k,k+1)T\in \SYT(\lambda)\}}.\]
We apply the inductive hypothesis on $\tilde{\sigma}$, obtaining 
\begin{itemize}
 \item $\frac{1}{d_k(T)}\pi^{\lambda}(\tilde{\sigma})_{T,T'}=\left\{\begin{array}{lcr}0&\mbox{if}&V\neq V'\\      \frac{1}{d_k(U)}\pi^{\nu}(\tilde{\sigma})_{U,U'} &\mbox{if}&V=V',     \end{array}\right.$
 \item $\sqrt{1-\frac{1}{d_k(T)^2}}\pi^{\lambda}(\tilde{\sigma})_{(k,k+1)T,T'}=\left\{\begin{array}{lcr}0&\mbox{if}&V\neq V'\\ \sqrt{1-\frac{1}{d_k(U)^2}}\pi^{\lambda}(\tilde{\sigma})_{(k,k+1)U,U'}&\mbox{if}&V=V',     \end{array}\right.$
\end{itemize}
since if $(k,k+1)T$ is a standard Young tableau, then $\phi_r((k,k+1)T)=((k,k+1)U,V)$. Similarly, $\delta_{\{(k,k+1)T\in \SYT(\lambda)\}}=\delta_{\{(k,k+1)U\in \SYT(\nu)\}}$. To conclude
\[\frac{1}{d_k(U)}\pi^{\lambda}(\tilde{\sigma})_{U,U'}+\sqrt{1-\frac{1}{d_k(U)^2}}\pi^{\lambda}(\tilde{\sigma})_{(k,k+1)U,U'}\delta_{\{(k,k+1)U\in \SYT(\nu)\}}=\pi^{\lambda}(\sigma)_{U,U'}\qedhere\]
\end{proof}

\begin{corollary}\label{corollary on total sum}
 Let $\lambda$ and $\sigma$ be as before, then 
 \[TS^{\lambda}(\sigma)=\sum_{\nu\vdash r}TS^{\nu}(\sigma)\cdot\dim\nu\cdot\frac{\dim\lambda/\nu}{\dim\lambda}.\]
\end{corollary}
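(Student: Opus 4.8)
The plan is to start directly from the definition
\[TS^{\lambda}(\sigma)=\frac{1}{\dim\lambda}\sum_{T,T'\in\SYT(\lambda)}\pi^{\lambda}(\sigma)_{T,T'},\]
and to reorganise this double sum using the bijection $\phi_r$ together with the preceding lemma. First I would reindex each $T$ by its image $\phi_r(T)=(U,V)$ and each $T'$ by $\phi_r(T')=(U',V')$, so that the sum now runs over all quadruples $(U,V,U',V')$ with $U\in\SYT(\nu)$, $V\in\SYT(\lambda/\nu)$, $U'\in\SYT(\nu')$, $V'\in\SYT(\lambda/\nu')$, where $\nu,\nu'\vdash r$ and the bijection guarantees that nothing is counted twice or omitted.

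The key step is to invoke the lemma to discard almost all of these terms. It states that $\pi^{\lambda}(\sigma)_{T,T'}=0$ whenever $V\neq V'$, so only the diagonal-in-$V$ contributions survive. Imposing $V=V'$ forces $\sh(V)=\sh(V')$, hence $\lambda/\nu=\lambda/\nu'$ and therefore $\nu=\nu'$; on the surviving terms the lemma replaces $\pi^{\lambda}(\sigma)_{T,T'}$ by $\pi^{\nu}(\sigma)_{U,U'}$. After this substitution the summand no longer depends on the common tableau $V$, so the sum over $V\in\SYT(\lambda/\nu)$ merely contributes a multiplicative factor $|\SYT(\lambda/\nu)|=\dim\lambda/\nu$.

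What then remains is to recognise the inner double sum over $U,U'\in\SYT(\nu)$ as $\sum_{U,U'}\pi^{\nu}(\sigma)_{U,U'}=\dim\nu\cdot TS^{\nu}(\sigma)$, directly from the definition of the total sum at level $\nu$. Collecting the three factors yields
\[TS^{\lambda}(\sigma)=\frac{1}{\dim\lambda}\sum_{\nu\vdash r}\dim\lambda/\nu\cdot\dim\nu\cdot TS^{\nu}(\sigma),\]
which is precisely the claimed identity after moving $1/\dim\lambda$ inside the sum.

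I do not expect a serious obstacle here, since the lemma already carries all of the representation-theoretic content; the computation is essentially a regrouping of a finite sum. The only point genuinely requiring care is the bookkeeping of the reindexing: one must check that the constraint $V=V'$ collapses the two a priori independent partitions $\nu,\nu'$ into a single $\nu$, and that the count of skew tableaux $\SYT(\lambda/\nu)$ factors out cleanly as $\dim\lambda/\nu$. Verifying these two points is where I would be most explicit.
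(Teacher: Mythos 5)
Your proposal is correct and follows exactly the paper's own argument: reindex the double sum over $\SYT(\lambda)$ via $\phi_r$, use the preceding lemma to kill the terms with $V\neq V'$ and replace $\pi^{\lambda}(\sigma)_{T,T'}$ by $\pi^{\nu}(\sigma)_{U,U'}$, factor out $\dim\lambda/\nu$ from the sum over $V$, and recognise the remaining double sum as $\dim\nu\cdot TS^{\nu}(\sigma)$. Your write-up is simply a more explicit version of the paper's one-line computation, with the same content.
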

\begin{proof}
 By the previous lemma:
 \[TS^{\lambda}(\sigma):=\sum_{T,T'\in \SYT(\lambda)} \frac{\pi^{\lambda}(\sigma)_{i,j}}{\dim\lambda}=\sum_{\nu\vdash r}\sum_{U,U'\in\SYT(\nu)}\pi^{\nu}(\sigma)_{U,U'}\cdot\frac{\dim\lambda/\nu}{\dim\lambda},\]
 and the conclusion is immediate.
\end{proof}

\begin{lemma}\label{lemma on skew dimension}
 Let $|\nu|=r\leq n=|\lambda|$, then
 \[\dim\lambda/\nu=\frac{1}{r!}\sum_{\tau\in S_r}\chi^{\nu}(\tau)\chi^{\lambda}(\tau).\]
\end{lemma}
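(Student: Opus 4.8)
The plan is to reinterpret the combinatorial quantity $\dim\lambda/\nu$ as a multiplicity inside a restricted representation, and then to extract that multiplicity from the inner product of characters. Throughout I regard $S_r$ as the subgroup of $S_n$ fixing the points $r+1,\dots,n$, exactly as elsewhere in the paper, so that for $\tau\in S_r$ the symbol $\chi^\lambda(\tau)$ denotes the character of $\pi^\lambda$ evaluated on the image of $\tau$ in $S_n$ (i.e. $\chi^\lambda_{\rho\cup 1^{n-r}}$ for $\rho$ the cycle type of $\tau$).

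First I would show that $\dim\lambda/\nu$ equals the multiplicity of the irreducible $S_r$-module $V^\nu$ inside $\mathrm{Res}^{S_n}_{S_r}V^\lambda$, where $V^\lambda$ is the module carrying $\pi^\lambda$. Applying the branching rule in its module form, $\mathrm{Res}^{S_m}_{S_{m-1}}V^\alpha=\bigoplus_{\beta\nearrow\alpha}V^\beta$, a total of $n-r$ times decomposes $\mathrm{Res}^{S_n}_{S_r}V^\lambda$ into irreducibles, where the multiplicity of a given $V^\nu$ counts the saturated chains $\nu=\nu^{(0)}\nearrow\nu^{(1)}\nearrow\cdots\nearrow\nu^{(n-r)}=\lambda$ in Young's lattice. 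Labelling the box added at the $k$-th step by $k$ sets up a bijection between such chains and standard Young tableaux of skew shape $\lambda/\nu$; hence the multiplicity is exactly $\dim\lambda/\nu$. (The same count is implicit in the bijection $\phi_r$ of the preceding proposition, which upon taking dimensions yields $\dim\lambda=\sum_{\nu\vdash r}\dim\nu\cdot\dim\lambda/\nu$, the dimension identity attached to this restriction.)

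Second, I would read off the multiplicity from character theory. For any $S_r$-module $W$ the multiplicity of $V^\nu$ is $\langle\chi^\nu,\chi_W\rangle_{S_r}=\frac{1}{r!}\sum_{\tau\in S_r}\chi^\nu(\tau)\overline{\chi_W(\tau)}$. Taking $W=\mathrm{Res}^{S_n}_{S_r}V^\lambda$ gives $\chi_W(\tau)=\chi^\lambda(\tau)$, and since the irreducible characters of the symmetric groups are integer-valued the complex conjugate may be dropped. Combining the two steps yields
\[\dim\lambda/\nu=\frac{1}{r!}\sum_{\tau\in S_r}\chi^\nu(\tau)\chi^\lambda(\tau),\]
which is the assertion.

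The computation is essentially routine once the right viewpoint is fixed; the only genuine step is the identification in the second paragraph, namely recognising $\dim\lambda/\nu$ as a restriction multiplicity. I expect this to be the main (though modest) obstacle, since it is the point at which the combinatorial object $\lambda/\nu$ must be translated into representation-theoretic language through iterated branching, after which orthogonality of characters finishes the proof immediately.
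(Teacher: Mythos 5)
Your proof is correct, and it rests on the same two ingredients as the paper's --- the branching rule and orthogonality of characters --- but it organises them differently. The paper fixes $\nu$ and inducts on $n$: the base case $n=r$ is orthogonality of the first kind, the step uses the one-box branching identity $\chi^{\lambda}(\tau)=\sum_{\mu\nearrow\lambda}\chi^{\mu}(\tau)$ together with the combinatorial recursion $\dim\lambda/\nu=\sum_{\tilde\lambda\nearrow\lambda}\dim\lambda/\tilde\lambda\cdot\dim\tilde\lambda/\nu$, so orthogonality is invoked only once at the bottom of the induction. You instead do the branching ``all at once'': you identify $\dim\lambda/\nu$ as the multiplicity of $V^{\nu}$ in $\mathrm{Res}^{S_n}_{S_r}V^{\lambda}$ via the bijection between saturated chains in Young's lattice and standard tableaux of skew shape, and then read the multiplicity off with a single inner product of characters. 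Your version is the more conceptual and arguably the more standard textbook argument (it makes transparent \emph{why} the formula holds: it is a restriction multiplicity), and it dispenses with the induction entirely; the paper's version stays entirely at the level of character identities and the skew-tableau count it has already set up with the bijection $\phi_r$, at the cost of an explicit induction. Both are complete; the remark about dropping the complex conjugate because symmetric-group characters are integer-valued is a detail the paper glosses over but that you correctly flag.
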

\begin{proof}
Fix $\nu$, we prove the statement by induction on $n$. If $n=r$ then the LHS is equal to $\delta_{\lambda,\nu}$, the Kronecker delta, and same for the RHS by the character orthogonality relation of the first kind.
 \smallskip
 
 Suppose $r=n-1$, then by \eqref{decomposition of the representation matrix} we have $\chi^{\lambda}(\tau)=\sum_{\mu_j\nearrow\lambda}\chi^{\mu_j}(\tau)$ for each $\tau\in S_{n-1}$. Hence the RHS of the statement can be written as 
 \[\frac{1}{r!}\sum_{\tau\in S_r}\chi^{\nu}(\tau)\chi^{\lambda}(\tau)=\sum_{\mu_j\nearrow\lambda}\left(\frac{1}{r!}\sum_{\tau\in S_r}\chi^{\nu}(\tau)\chi^{\mu_j}(\tau)\right)=\sum_{\mu_j\nearrow\lambda}\delta_{\mu_j=\nu}=\delta_{\nu\nearrow\lambda},\]
 which is equal to the LHS.
 \smallskip
 
 By the inductive hypothesis we consider the statement true for each $\tilde{\lambda}\vdash n-1$. Hence 
 \begin{align*}
  \dim\lambda/\nu&=\sum_{\tilde{\lambda}\vdash n-1}\dim\lambda/\tilde{\lambda}\cdot\dim\tilde{\lambda}/\nu\\
  &=\sum_{\tilde{\lambda}\nearrow\lambda}\left(\frac{1}{r!}\sum_{\tau\in S_r}\chi^{\nu}(\tau)\chi^{\tilde{\lambda}}(\tau)\right)\\
  &=\frac{1}{r!}\sum_{\tau\in S_r}\chi^{\nu}(\tau)\chi^{\lambda}(\tau).\qedhere
 \end{align*} 
\end{proof}

\begin{proposition}\label{formula con traccia}
 Let $\sigma\in S_r$, $\lambda\vdash n$, then
 \[TS^{\lambda}(\sigma)=\sum_{\tau\in S_r}\mathbb{E}_{PL}^{r}\left[\hat{\chi}^{\nu}(\tau) TS^{\nu}(\sigma)\right]\hat{\chi}^{\lambda}(\tau).\]
\end{proposition}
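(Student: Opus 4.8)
The plan is to chain together the two structural identities already established, namely Corollary~\ref{corollary on total sum} and Lemma~\ref{lemma on skew dimension}, and then to reorganize the resulting double sum so that the Plancherel expectation emerges. Since everything in sight is a finite sum (over partitions $\nu\vdash r$, over $\tau\in S_r$, and over standard Young tableaux), no analytic subtleties arise: the entire argument is a bookkeeping of normalization factors followed by an interchange of summation order.

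First I would start from Corollary~\ref{corollary on total sum}, which gives
\[TS^{\lambda}(\sigma)=\sum_{\nu\vdash r}TS^{\nu}(\sigma)\cdot\dim\nu\cdot\frac{\dim\lambda/\nu}{\dim\lambda}.\]
Into this I would substitute the skew-dimension formula of Lemma~\ref{lemma on skew dimension}, namely $\dim\lambda/\nu=\frac{1}{r!}\sum_{\tau\in S_r}\chi^{\nu}(\tau)\chi^{\lambda}(\tau)$, turning the expression into a double sum over $\nu$ and $\tau$.

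The key step is then to pass from ordinary to normalized characters. Writing $\chi^{\nu}(\tau)=\dim\nu\cdot\hat{\chi}^{\nu}(\tau)$ and $\chi^{\lambda}(\tau)=\dim\lambda\cdot\hat{\chi}^{\lambda}(\tau)$, the factor $\dim\lambda$ cancels the denominator, while the accumulated factor attached to each $\nu$ becomes $(\dim\nu)^2/r!$, which is precisely the weight defining the Plancherel measure of size $r$. After interchanging the two finite sums, one obtains
\[TS^{\lambda}(\sigma)=\sum_{\tau\in S_r}\hat{\chi}^{\lambda}(\tau)\sum_{\nu\vdash r}\frac{(\dim\nu)^2}{r!}\,\hat{\chi}^{\nu}(\tau)\,TS^{\nu}(\sigma),\]
and recognizing the inner sum as $\mathbb{E}_{PL}^{r}\!\left[\hat{\chi}^{\nu}(\tau)\,TS^{\nu}(\sigma)\right]$ yields the claim.

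There is no genuine obstacle here; the only point requiring care is keeping track of the three dimension factors (the $\dim\nu$ supplied by the corollary, together with the $\dim\nu$ and $\dim\lambda$ produced when converting the two characters) so that exactly one square $(\dim\nu)^2$ survives inside the Plancherel weight while the $\dim\lambda$ disappears entirely. Once this normalization is handled correctly, the identity follows immediately.
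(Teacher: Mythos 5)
Your proposal is correct and follows exactly the paper's own argument: apply Corollary \ref{corollary on total sum}, substitute the skew-dimension formula of Lemma \ref{lemma on skew dimension}, normalize the characters so that the weight $(\dim\nu)^2/r!$ appears, and interchange the finite sums to recognize the Plancherel expectation. The bookkeeping of dimension factors you describe is precisely what the paper does.
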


\begin{proof}
We apply Corollary \ref{corollary on total sum} and Lemma \ref{lemma on skew dimension}:
 \begin{align*}
 TS^{\lambda}(\sigma)&=\sum_{\nu\vdash r}TS^{\nu}(\sigma)\cdot\dim\nu\left(\frac{1}{r!}\sum_{\tau\in S_r}\chi^{\nu}(\tau)\hat{\chi}^{\lambda}(\tau)\right)\\
&=\sum_{\tau\in S_r}\left(\sum_{\nu\vdash r}\frac{\dim\nu^2}{r!}\hat{\chi}^{\nu}(\tau)TS^{\nu}(\sigma)\right)\hat{\chi}^{\lambda}(\tau)\\
\end{align*}
and we recognize inside the parenthesis the average $\mathbb{E}_{PL}^{r}\left[\hat{\chi}^{\nu}(\tau) TS^{\nu}(\sigma)\right]$ taken with the Plancherel measure of partitions of $r$.
\end{proof}

\begin{proof}[Proof of Theorem \ref{convergence of total sum}]
Consider a permutation $\sigma$, we write the previous proposition as 
\[TS^{\lambda}(\sigma)=m+v\cdot\hat{\chi}^{\lambda}((1,2))+\sum_{\substack{\tau\in S_r\\\wt(\tau)>2}}\mathbb{E}_{PL}^{r}\left[\hat{\chi}^{\nu}(\tau) TS^{\nu}(\sigma)\right]\hat{\chi}^{\lambda}(\tau),\]
where the first two terms correspond respectively to $\tau=\id$ and the sum of all transpositions. By Kerov's Theorem \ref{convergence of characters}, we have that 
\[\sqrt{n}\sum_{\substack{\tau\in S_r\\\wt(\tau)>2}}\mathbb{E}_{PL}^{r}\left[\hat{\chi}^{\nu}(\tau) TS^{\nu}(\sigma)\right]\hat{\chi}^{\lambda}(\tau)\overset{p}\to 0\]
and $\sqrt{n} \cdot v\cdot \hat{\chi}^{\lambda}((1,2))\overset{d}\to v\mathcal{N}(0,2)$. Consider now a sequence of permutations $\sigma_1,\sigma_2,\ldots$, then
 \[\left\{\sqrt{n}\cdot(TS^{\lambda}(\sigma_i)-m_i)\right\}_{i\geq1}=\left\{n\cdot v_i\hat{\chi}^{\lambda}((1,2))+o_P(1)\right\}_{i\geq1}\overset{d}\to\left\{ \mathcal{N}(0,2 v_i^2)\right\}_{i\geq1}.\qedhere\]
\end{proof}

\begin{example}
 Let $\sigma=(r-1,r)$, $r>2$, be an adjacent transposition. In this example we will show that $m=\mathbb{E}_{PL}^{r}\left[TS^{\nu}(\sigma)\right]$ is strictly positive and that $v:=\binom{r}{2}\mathbb{E}_{PL}^{r}\left[\hat{\chi}^{\nu}_{(2,1,\ldots,1)} TS^{\nu}(\sigma)\right]=1$. First, notice that, for all $\nu\vdash r$ and for all $T,S$ standard Young tableaux of shape $\nu$, $d_{r-1}(T')=-d_{r-1}(T)$, where $T'$ is a tableau of shape $\nu'$ conjugated to $T$, which implies that
 \[\pi^{\nu}(\sigma)_{T',S'}=\left\{\begin{array}{lcr}
                                           -\pi^{\nu}(\sigma)_{T,S}&\mbox{if}& T=S\\
                                           \pi^{\nu}(\sigma)_{T,S}&\mbox{if}& T\neq S,
                                           \end{array}\right.\]
        and if $T\neq S$ then $ \pi^{\nu}(\sigma)_{T,S}\geq 0$. Hence
\[  m=\sum_{\nu\vdash r}\frac{(\dim\nu)^2}{r!}TS^{\nu}(\sigma)=\frac{1}{2}\sum_{\nu\vdash r}\frac{(\dim\nu)^2}{r!}(TS^{\nu}(\sigma)+TS^{\nu'}(\sigma)).\]
Since $TS^{\lambda}(\sigma)=\hat{\chi}^{\lambda}(\sigma)+\sum_{\substack{T,S\in \SYT(\nu)\\T\neq S}}\frac{\pi^{\nu}(\sigma)_{T,S}}{\dim\nu}$, then
\[m=\frac{1}{2}\sum_{\nu\vdash r}\frac{\dim\nu}{r!}\left(\sum_{\substack{T,S\in \SYT(\nu)\\T\neq S}}\pi^{\nu}(\sigma)_{T,S}+\pi^{\nu'}(\sigma)_{T',S'}\right)>0.\]
For $r>2$, at least one summand is nonzero.

Consider now $v$ and decompose $TS^{\nu}(\sigma)$ as above:
 \[v=\binom{r}{2}\left(\mathbb{E}_{PL}^{r}\left[\hat{\chi}^{\nu}_{(2,1,\ldots,1)} \hat{\chi}^{\nu}(\sigma)\right]+\mathbb{E}_{PL}^{r}\left[\hat{\chi}^{\nu}_{(2,1,\ldots,1)}\sum_{\substack{T,S\in \SYT(\nu)\\T\neq S}}\frac{\pi^{\nu}(\sigma)_{T,S}}{\dim\nu}\right]\right).\]
 By the character relations of the second kind we get 
 \[\mathbb{E}_{PL}^{r}\left[\hat{\chi}^{\nu}_{(2,1,\ldots,1)} \hat{\chi}^{\nu}(\sigma)\right]=\sum_{\nu\vdash r}\frac{(\dim\nu)^2}{r!}\hat{\chi}^{\nu}_{(2,1,\ldots,1)}\hat{\chi}^{\nu}(\sigma)=\frac{2}{r(r-1)}.\]
 On the other hand, using $T'$ for the conjugate of $T$ as above, 
 \[\mathbb{E}_{PL}^{r}\left[\hat{\chi}^{\nu}_{(2,1,\ldots,1)}\sum_{\substack{T,S\in \SYT(\nu)\\T\neq S}}\frac{\pi^{\nu}(\sigma)_{T,S}}{\dim\nu}\right]=\]
 \[\frac{1}{2}\sum_{\nu\vdash r}\frac{1}{r!}\sum_{\substack{T,S\in \SYT(\nu)\\T\neq S}}\left(\chi^{\nu}_{(2,1,\ldots,1)}\pi^{\nu}(\sigma)_{T,S}+\chi^{\nu'}_{(2,1,\ldots,1)}\pi^{\nu'}(\sigma)_{T',S'}\right)=0\]
 since $\pi^{\nu}(\sigma)_{T,S}=\pi^{\nu'}(\sigma)_{T',S'}$ if $T\neq S$ and $\chi^{\nu}_{(2,1,\ldots,1)}=-\chi^{\nu'}_{(2,1,\ldots,1)}$. Finally, $v=1$.
\end{example}
Here we write the values of $m= \mathbb{E}_{PL}^{r}\left[TS^{\nu}(\sigma)\right]$ and $v=\binom{r}{2}\mathbb{E}_{PL}^{r}\left[\hat{\chi}^{\nu}_{(2,1,\ldots,1)} TS^{\nu}(\sigma)\right]$ when $\sigma\in S_4$.
\begin{center}
 \scalebox{0.8}{
\begin{tabular}{ |c|ccccccccccccc|} 
 \hline
$\sigma$&$\id$&(3,4)&(2,3)&(2,3,4)&(2,4,3)&(2,4)&(1,2)&(1,2)(3,4)&(1,2,3)&(1,2,3,4)&(1,2,4,3)&(1,2,4)&(1,3,2)\\
$m$&1&1/2&2/3&5/12&1/6&-1/4&0&0&1/3&1/3&1/3&0&-1/3\\
$v$&0&1&1&1/2&4/3&13/6&1&1&0&0&2/3&1/3&0\\
 \hline
\end{tabular}}\smallskip

 \scalebox{0.8}{\begin{tabular}{|c|ccccccccccc|}
\hline
$\sigma$&(1,3,4,2)&(1,3)&(1,3,4)&(1,3)(2,4)&(1,3,2,4)&(1,4,3,2)&(1,4,2)&(1,4,3)&(1,4)&(1,4,2,3)&(1,4)(2,3)\\
$m$&-1/12&-2/3&-1/6&7/12&1/6&-1/12&0&-5/12&-1/4&-2/3&-7/12\\
$v$&1/2&1&0&-7/6&-1/3&-7/6&-4/3&-5/6&-1/6&1/3&1/6\\
\hline
\end{tabular}}
\end{center}

\subsection{Partial sum of the entries of an irreducible representation}
\begin{defi}
Let $\lambda\vdash n$, $\sigma\in S_n, u\in \R$. Then
\[PS_u^{\lambda}(\sigma):=\sum_{i,j\leq u \dim\lambda}\frac{\pi^{\lambda}(\sigma)_{i,j}}{\dim\lambda}.\]
 \end{defi}
 We can now argue in a similar way as we did in Section \ref{chapter the partial trace}: summing entries of $\pi^{\lambda}(\sigma)$ up to a certain index is equivalent to sum all the entries of the submatrices $\pi^{\mu_j}(\sigma)$ for $j<\bar{j}$ and the right choice of $\bar{j}$, plus a remainder which is again a partial sum. We present the analogous of Proposition \ref{first order decomposition of the partial trace}; we omit the proof, since it follows the same argument of the partial trace version:
\begin{proposition}\label{first order decomposition of partial sum}
 Fix $u\in [0,1]$, $\lambda\vdash n$ and $\sigma\in S_r$ with $r\leq n-1$. Set $\left(F_{ct}^{\lambda}\right)^{*}(u)=\frac{y_{\bar{j}}}{\sqrt{n}}=v^{\lambda}$. Define
 \[\bar{u}=\frac{\dim\lambda}{\dim\mu_{\bar{j}}}\left(u-\sum_{j<\bar{j}}\dim\mu_j\right)<1, \]
 then
\begin{equation}\label{first decomposition of partial sum}
 PS_u^{\lambda}(\sigma)=\sum_{y_j<v^{\lambda}\sqrt{n}}\frac{\dim\mu_j}{\dim\lambda}TS^{\mu_j}(\sigma)+\frac{\dim\mu_{\bar{j}}}{\dim\lambda}PS_{\bar{u}}^{\mu_{\bar{j}}}(\sigma).
\end{equation}
\end{proposition}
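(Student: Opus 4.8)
The plan is to follow the proof of Proposition \ref{first order decomposition of the partial trace} almost verbatim, the only genuinely new ingredient being that the cross terms between distinct diagonal blocks vanish. Since $\sigma\in S_r$ with $r\leq n-1$, the matrix $\pi^{\lambda}(\sigma)$ is block diagonal as in \eqref{decomposition of the representation matrix}: the $j$-th block is $\pi^{\mu_j}(\sigma)$ and, in the last letter order, it occupies the indices from $\sum_{l<j}\dim\mu_l+1$ up to $\sum_{l\leq j}\dim\mu_l$. In particular $\pi^{\lambda}(\sigma)_{i,j}=0$ whenever $i$ and $j$ lie in different blocks, which is exactly the fact that will let the double sum split cleanly.

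First I would record, as in Lemma \ref{behaviour of co-transition}, that the choice $\left(F_{ct}^{\lambda}\right)^{*}(u)=y_{\bar{j}}/\sqrt{n}=v^{\lambda}$ gives
\[
\sum_{l<\bar{j}}\dim\mu_l\leq u\dim\lambda\leq\sum_{l\leq\bar{j}}\dim\mu_l,
\]
so that the cutoff $u\dim\lambda$ falls inside (or at the boundary of) the $\bar{j}$-th block. Splitting the double sum $\sum_{i,j\leq u\dim\lambda}$ according to the blocks containing $i$ and $j$ respectively, every pair $(i,j)$ whose entries lie in two distinct blocks contributes $0$ by block diagonality, so only the diagonal blocks survive.

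For each complete block $l<\bar{j}$ the indices $i,j$ range over the whole block, and by the definition of the total sum $\sum_{i,j}\pi^{\mu_l}(\sigma)_{i,j}=\dim\mu_l\cdot TS^{\mu_l}(\sigma)$, contributing $\frac{\dim\mu_l}{\dim\lambda}TS^{\mu_l}(\sigma)$. For the $\bar{j}$-th block I would substitute $\tilde{i}=i-\sum_{l<\bar{j}}\dim\mu_l$ (and likewise for the column index), so that $\tilde{i}$ runs from $1$ to $u\dim\lambda-\sum_{l<\bar{j}}\dim\mu_l=\bar{u}\dim\mu_{\bar{j}}$; the corresponding partial double sum is precisely $\dim\mu_{\bar{j}}\cdot PS_{\bar{u}}^{\mu_{\bar{j}}}(\sigma)$, contributing $\frac{\dim\mu_{\bar{j}}}{\dim\lambda}PS_{\bar{u}}^{\mu_{\bar{j}}}(\sigma)$. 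Summing the two types of contribution, and rewriting the index set $\{l<\bar{j}\}$ as $\{l:y_l<v^{\lambda}\sqrt{n}\}$ in accordance with the definition of $\bar{j}$ (recall $v^{\lambda}=y_{\bar{j}}/\sqrt{n}$), yields \eqref{first decomposition of partial sum}.

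The only point requiring care — and the reason the double-sum case is not literally identical to the trace case — is the vanishing of the off-diagonal cross terms, but this is immediate from \eqref{decomposition of the representation matrix}. Hence no genuine obstacle remains beyond the bookkeeping of the block boundaries and the reindexing within the $\bar{j}$-th block, exactly as in the partial trace argument.
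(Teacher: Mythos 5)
Your proof is correct and takes exactly the route the paper intends: the paper omits this proof precisely because "it follows the same argument of the partial trace version," and you supply that argument, correctly identifying the one new point (vanishing of cross terms between distinct diagonal blocks, immediate from the block decomposition \eqref{decomposition of the representation matrix}) and handling the bookkeeping of complete blocks versus the truncated $\bar{j}$-th block as in Proposition \ref{first order decomposition of the partial trace}.
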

As before, we call the \emph{main term for the partial sum} $MS_u^{\lambda}(\sigma):=\sum_{y_j<v^{\lambda}\sqrt{n}}\frac{\dim\mu_j}{\dim\lambda}TS^{\mu_j}(\sigma)$ and \emph{remainder for the partial sum} $RS_u^{\lambda}(\sigma):=\frac{\dim\mu_{\bar{j}}}{\dim\lambda}PS_{\bar{u}}^{\mu_{\bar{j}}}(\sigma)$.

The connection between the main term for the partial trace and the main term for the partial sum is easily described by applying Proposition \ref{formula con traccia}:
\begin{align*}
 MS_u^{\lambda}(\sigma)&=\sum_{y_j<v^{\lambda}\sqrt{n}}\frac{\dim\mu_j}{\dim\lambda}TS^{\mu_j}(\sigma)\\
 &=\sum_{ y_j<v^{\lambda}\sqrt{n}}\frac{\dim\mu_j}{\dim\lambda}\sum_{\tau\in S_r}\mathbb{E}_{PL}^{r}\left[\hat{\chi}^{\nu}(\tau) TS^{\nu}(\sigma)\right]\hat{\chi}^{\mu_j}(\tau)\\
 &=\sum_{\tau\in S_r}\mathbb{E}_{PL}^{r}\left[\hat{\chi}^{\nu}(\tau) TS^{\nu}(\sigma)\right]\left(  \sum_{ y_j<v^{\lambda}\sqrt{n}}\frac{\dim\mu_j}{\dim\lambda}\hat{\chi}^{\mu_j}(\tau)\right)\\
 &=\sum_{\tau\in S_r}\mathbb{E}_{PL}^{r}\left[\hat{\chi}^{\nu}(\tau) TS^{\nu}(\sigma)\right]MT_u^{\lambda}(\tau),\numberthis\label{formula between G and F}
\end{align*}
We can thus apply Theorem \ref{convergence of transformed co-transition} on the convergence of the main term for the partial trace to describe the asymptotic of $MS_u^{\lambda}(\sigma)$; notice that the only term in the previous sum which does not disappear when $\lambda$ increases is the one in which $\tau$ is the identity; moreover, the second highest degree term correspond to $\tau$ being a transposition, and thus of order $O_P(n^{-\frac{1}{2}})$. We get:
\begin{corollary}\label{convergence of G}
Set $\sigma_1\in S_{r_1},\sigma_2\in S_{r_2},\ldots$, $u_1,u_2,\ldots\in[0,1]$ and let $\lambda\vdash n$. Consider as before
\[m_i:= \mathbb{E}_{PL}^{r_i}\left[TS^{\nu}(\sigma_i)\right]\qquad \mbox{and}\qquad v_i:=\binom{r_i}{2}\mathbb{E}_{PL}^{r_i}\left[\hat{\chi}^{\nu}_{(2,1,\ldots,1)} TS^{\nu}(\sigma_i)\right].\]
Then 
 \[\left\{\sqrt{n}\cdot\left(MS_{u_i}^{\lambda}(\sigma_i)-u_i\cdot m_i\right)\right\}\overset{d}\to \left\{u_i\cdot\mathcal{N}(0,2 v_i^2)\right\}.\]
\end{corollary}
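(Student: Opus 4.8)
The plan is to exploit the linear expansion \eqref{formula between G and F}, which writes $MS_u^{\lambda}(\sigma)=\sum_{\tau\in S_r}c_{\tau}\,MT_u^{\lambda}(\tau)$ with the \emph{same} constant coefficients $c_{\tau}:=\mathbb{E}_{PL}^{r}[\hat{\chi}^{\nu}(\tau)TS^{\nu}(\sigma)]$ that appear in Proposition \ref{formula con traccia}'s expansion $TS^{\lambda}(\sigma)=\sum_{\tau\in S_r}c_{\tau}\,\hat{\chi}^{\lambda}(\tau)$. So the strategy is to compare $MS_u^{\lambda}(\sigma)$ termwise with $u\cdot TS^{\lambda}(\sigma)$ and then quote Theorem \ref{convergence of total sum}. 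Concretely, I would first isolate the index $\tau=\id$: here $c_{\id}=m$ and $MT_u^{\lambda}(\id)=F_{ct}^{\lambda}(v^{\lambda})\overset{p}\to u$ by Lemma \ref{behaviour of co-transition}, which supplies the centring value $um$.

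For the remaining indices $\tau\neq\id$ I would use the structural estimate underlying Theorem \ref{convergence of transformed co-transition}: writing $MT_u^{\lambda}(\tau)=\sum_{y_j\le v^{\lambda}\sqrt n}\frac{\dim\mu_j}{\dim\lambda}\hat{\chi}^{\mu_j}(\tau)$ and invoking Proposition \ref{final corollary} (so that $\hat{\chi}^{\mu_j}(\tau)=\hat{\chi}^{\lambda}(\tau)+o_P(n^{-\wt(\tau)/2})$, uniformly in $j$ in the sense of the paper's convention), one gets $MT_u^{\lambda}(\tau)=MT_u^{\lambda}(\id)\,\hat{\chi}^{\lambda}(\tau)+o_P(n^{-\wt(\tau)/2})$ since $\sum_{y_j\le v^{\lambda}\sqrt n}\frac{\dim\mu_j}{\dim\lambda}=MT_u^{\lambda}(\id)$. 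Summing against the $c_\tau$ and using $MT_u^{\lambda}(\id)\to u$ then yields $MS_u^{\lambda}(\sigma)-um=u\big(TS^{\lambda}(\sigma)-m\big)+(\text{remainder})$, where the remainder collects the cross terms $c_\tau\hat{\chi}^{\lambda}(\tau)\big(MT_u^{\lambda}(\id)-u\big)$ and the $o_P$-errors. As in the total-sum case the dominant surviving fluctuation comes from the transposition class: all transpositions share one cycle type and pool into $v\cdot MT_u^{\lambda}((1,2))$ with $v=\binom{r}{2}\mathbb{E}_{PL}^{r}[\hat{\chi}^{\nu}_{(2,1,\ldots,1)}TS^{\nu}(\sigma)]$, and Theorem \ref{convergence of transformed co-transition} identifies the rescaled transposition main term with $u$ times the rescaled transposition character of Theorem \ref{convergence of characters}.

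Having reduced matters to $MS_u^{\lambda}(\sigma)-um\approx u\,\big(TS^{\lambda}(\sigma)-m\big)$, the univariate claim follows from Theorem \ref{convergence of total sum} by Slutsky's theorem, the constant $u$ passing through the limit to give $u\,\mathcal{N}(0,2v^2)$. For the multivariate statement I would run the same computation coordinatewise: the joint convergence already built into Theorem \ref{convergence of transformed co-transition}, together with the fact that all limiting Gaussians are assembled from the single family $\{\xi_k\}$ used in Theorem \ref{convergence of total sum}, delivers the joint limit $\{u_i\,\mathcal{N}(0,2v_i^2)\}$ with the correct correlation structure across the $\sigma_i$.

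The step I expect to be the main obstacle is controlling the $\tau=\id$ contribution beyond its leading value. The available results only give the law of large numbers $MT_u^{\lambda}(\id)\to u$, not a rate, yet the centred quantity $MT_u^{\lambda}(\id)-u$ is exactly the co-transition overshoot, bounded by the straddling atom $\frac{\dim\mu_{\bar j}}{\dim\lambda}$, and one must show it is negligible at the scale of the fluctuation. Two facts tame the cross terms — $\hat{\chi}^{\mu_j}(\id)=\hat{\chi}^{\lambda}(\id)=1$ exactly, so the $\id$ term carries no internal $o_P$-error, and $\max_j\frac{\dim\mu_j}{\dim\lambda}\overset{p}\to 0$ by the atom-free limit of Lemma \ref{convergence of co-transition}, so each $\hat{\chi}^{\lambda}(\tau)\big(MT_u^{\lambda}(\id)-u\big)$ with $\tau\neq\id$ is of smaller order than $\hat{\chi}^{\lambda}(\tau)$. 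Pinning down that the residual $m\big(MT_u^{\lambda}(\id)-u\big)$ does not survive against the transposition-driven fluctuation, however, requires a sharp bound on the largest co-transition atom, and this is the genuinely delicate point on which the whole comparison rests.
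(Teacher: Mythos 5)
Your proposal follows essentially the same route as the paper's proof: both expand $MS_u^{\lambda}(\sigma)=\sum_{\tau\in S_r}\mathbb{E}_{PL}^{r}[\hat{\chi}^{\nu}(\tau)TS^{\nu}(\sigma)]\,MT_u^{\lambda}(\tau)$ via \eqref{formula between G and F}, peel off $\tau=\id$ to get the centring $u\cdot m$ from $MT_u^{\lambda}(\id)\overset{p}\to u$ (Lemma \ref{behaviour of co-transition}), let the transposition class pool into $v\cdot MT_u^{\lambda}((1,2))$ and carry the $\sqrt{n}$-fluctuation through Theorem \ref{convergence of transformed co-transition}, and discard the $\wt(\tau)>2$ terms as $o_P(n^{-1/2})$. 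The step you single out as the genuine obstacle --- showing that $\sqrt{n}\,m\,(MT_u^{\lambda}(\id)-u)$, which is controlled by $\sqrt{n}\,\dim\mu_{\bar{j}}/\dim\lambda$, is negligible --- is exactly the step the paper's own proof passes over by quoting only the unquantified convergence $MT_u^{\lambda}(\id)\overset{p}\to u$, so you have correctly located the weakest link of the argument rather than introduced a new gap.
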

\begin{proof}
 For a generic $u$ and $\sigma$, we rewrite \eqref{formula between G and F} as:
 \[MS_u^{\lambda}(\sigma)=m\cdot MT_u^{\lambda}(\id) + \sum_{\substack{\tau\in S_r\\\wt(\tau)=2}}\mathbb{E}_{PL}^{r}\left[\hat{\chi}^{\nu}(\tau) TS^{\nu}(\sigma)\right]MT_u^{\lambda}(\tau)+ \sum_{\substack{\tau\in S_r\\\wt(\tau)>2}}\mathbb{E}_{PL}^{r}\left[\hat{\chi}^{\nu}(\tau) TS^{\nu}(\sigma)\right]MT_u^{\lambda}(\tau).\]
 By Lemma \ref{behaviour of co-transition}, $MT_u^{\lambda}(\id)\overset{p}\to u$. By Theorem \ref{convergence of transformed co-transition}
 \[n^{\frac{1}{2}}\sum_{\substack{\tau\in S_r\\\wt(\tau)=2}}\mathbb{E}_{PL}^{r}\left[\hat{\chi}^{\nu}(\tau) TS^{\nu}(\sigma)\right]MT_u^{\lambda}(\tau)\overset{d}\to v\cdot u\cdot \mathcal{N}(0,2);\]
 on the other hand 
 \[\sum_{\substack{\tau\in S_r\\\wt(\tau)>2}}\mathbb{E}_{PL}^{r}\left[\hat{\chi}^{\nu}(\tau) TS^{\nu}(\sigma)\right]MT_u^{\lambda}(\tau)\in o_P(n^{-\frac{1}{2}}).\]
Thus 
 \[\left\{\sqrt{n}\cdot\left(MS_{u_i}^{\lambda}(\sigma_i)-u_i\cdot m_i\right)\right\}\overset{d}\to \left\{u_i\cdot\mathcal{N}(0,2 v_i^2)\right\}.\qedhere\]
\end{proof}

Although we cannot show a satisfying result on the convergence of the partial trace because we cannot prove that $n^{\frac{wt(\rho)}{2}}\frac{\dim\mu_{\bar{j}}}{\dim\lambda}PT_{\bar{u}}^{\mu_{\bar{j}}}(\sigma)\to 0$, we are more lucky with the partial sum:

\begin{theorem}{\label{convergence of partial sum}}
 Set $\sigma\in S_r$ and let $\lambda\vdash n$. Set $m:= \mathbb{E}_{PL}^{r}\left[TS^{\nu}(\sigma)\right]$
Then 
 \[ PS_u^{\lambda}(\sigma)\overset{p}\to u\cdot m.\]
\end{theorem}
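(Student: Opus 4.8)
The plan is to split the partial sum via its decomposition and control the two pieces separately. By Proposition \ref{first order decomposition of partial sum} I may write
\[PS_u^{\lambda}(\sigma)=MS_u^{\lambda}(\sigma)+RS_u^{\lambda}(\sigma),\]
where $MS_u^{\lambda}(\sigma)=\sum_{y_j<v^{\lambda}\sqrt{n}}\frac{\dim\mu_j}{\dim\lambda}TS^{\mu_j}(\sigma)$ is the main term and $RS_u^{\lambda}(\sigma)=\frac{\dim\mu_{\bar{j}}}{\dim\lambda}PS_{\bar{u}}^{\mu_{\bar{j}}}(\sigma)$ is the remainder. It then suffices to prove that $MS_u^{\lambda}(\sigma)\overset{p}\to u\cdot m$ and that $RS_u^{\lambda}(\sigma)\overset{p}\to 0$.

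For the main term I would simply invoke Corollary \ref{convergence of G}, which gives $\sqrt{n}\cdot(MS_u^{\lambda}(\sigma)-u\cdot m)\overset{d}\to u\cdot\mathcal{N}(0,2v^2)$. Since the limit is a finite random variable, the sequence $\sqrt{n}\cdot(MS_u^{\lambda}(\sigma)-u\cdot m)$ is stochastically bounded; dividing by $\sqrt{n}$ (equivalently, applying Slutsky's lemma with the deterministic null factor $1/\sqrt{n}$) shows that $MS_u^{\lambda}(\sigma)-u\cdot m\in O_P(n^{-1/2})\subseteq o_P(1)$, so $MS_u^{\lambda}(\sigma)\overset{p}\to u\cdot m$.

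The key step is the control of the remainder, and this is where the orthogonality of the Young seminormal representation enters. From Definition \ref{def young seminormal} one checks directly that each generator $\pi^{\lambda}((k,k+1))$ is real orthogonal: where $(k,k+1)T$ is again standard, the relevant $2\times 2$ block has orthonormal rows because $d_k((k,k+1)T)=-d_k(T)$, and otherwise the block is a $1\times 1$ entry equal to $\pm 1$. Hence $\pi^{\mu_{\bar{j}}}(\sigma)$ is orthogonal with operator norm $1$. Writing $PS_{\bar{u}}^{\mu_{\bar{j}}}(\sigma)=\frac{1}{\dim\mu_{\bar{j}}}\,\mathbf{1}_k^{\top}\pi^{\mu_{\bar{j}}}(\sigma)\,\mathbf{1}_k$, where $\mathbf{1}_k$ is the indicator vector of the first $k=\lfloor\bar{u}\dim\mu_{\bar{j}}\rfloor$ coordinates, Cauchy--Schwarz gives
\[\left|\mathbf{1}_k^{\top}\pi^{\mu_{\bar{j}}}(\sigma)\,\mathbf{1}_k\right|\leq \|\mathbf{1}_k\|\cdot\|\pi^{\mu_{\bar{j}}}(\sigma)\,\mathbf{1}_k\|=\|\mathbf{1}_k\|^2=k\leq \dim\mu_{\bar{j}}.\]
Thus $|PS_{\bar{u}}^{\mu_{\bar{j}}}(\sigma)|\leq 1$ uniformly, and therefore $|RS_u^{\lambda}(\sigma)|\leq \frac{\dim\mu_{\bar{j}}}{\dim\lambda}\leq \max_j\frac{\dim\mu_j}{\dim\lambda}$. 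By Lemma \ref{behaviour of co-transition} this maximum tends to $0$ in probability, whence $RS_u^{\lambda}(\sigma)\overset{p}\to 0$. Adding the two estimates yields $PS_u^{\lambda}(\sigma)\overset{p}\to u\cdot m$.

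The main obstacle is precisely the remainder. The uniform bound $|PS_{\bar{u}}^{\mu_{\bar{j}}}(\sigma)|\leq 1$ combined with the vanishing of $\dim\mu_{\bar{j}}/\dim\lambda$ is enough to push $RS_u^{\lambda}(\sigma)$ to $0$, but it provides no control on the rate of convergence. This is exactly why (as anticipated in the introduction) one obtains only a law of large numbers for $PS_u^{\lambda}(\sigma)$: upgrading to a central limit theorem would require $\sqrt{n}\,RS_u^{\lambda}(\sigma)\overset{p}\to 0$, which this crude operator-norm estimate cannot deliver.
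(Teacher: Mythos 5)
Your proof is correct, and it reaches the conclusion by a somewhat different route than the paper. Both arguments rest on the decomposition of Proposition \ref{first order decomposition of partial sum} and both ultimately kill the remainder by multiplying a uniformly bounded quantity by $\dim\mu_{\bar{j}}/\dim\lambda\overset{p}\to 0$, but the two halves are handled differently. For the main term, the paper does not invoke Corollary \ref{convergence of G} directly; instead it compares $PS_u^{\lambda}(\sigma)$ with $\sum_{\tau\in S_r}\mathbb{E}_{PL}^{r}\left[\hat{\chi}^{\nu}(\tau) TS^{\nu}(\sigma)\right]PT_u^{\lambda}(\tau)$ and uses a separate lemma showing $PT_u^{\lambda}(\tau)\overset{p}\to 0$ for $\tau\neq\id$; your shortcut of reading off $MS_u^{\lambda}(\sigma)-u\cdot m\in O_P(n^{-1/2})$ from the already-established CLT (Corollary \ref{convergence of G}) is legitimate and not circular, since that corollary is proved independently. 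For the remainder, the paper bounds $\left|PS_{\bar{u}}^{\mu_{\bar{j}}}(\sigma)\right|$ by $2\bar{u}\cdot r!\cdot 2^{l(\sigma)}$ using the band structure of Lemma \ref{bound on nonzero terms} together with the entrywise bound $|\pi^{\lambda}(\sigma)_{T,S}|\leq 2^{l(\sigma)}$ of Lemma \ref{bound on entries}; your operator-norm argument replaces both of these with the single observation that the representation as defined in Definition \ref{def young seminormal} (which, with the square roots, is in fact the orthogonal form) consists of orthogonal matrices, yielding the cleaner uniform bound $\left|PS_{\bar{u}}^{\mu_{\bar{j}}}(\sigma)\right|\leq 1$. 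Either bound suffices, since only uniform boundedness in $n$ is needed; your version is sharper and shorter, while the paper's entrywise estimates are reused elsewhere (e.g.\ in the conjecture section), which is presumably why the author develops them. Your closing remark correctly identifies the same obstruction the paper points out: neither bound controls the rate at which $RS_u^{\lambda}(\sigma)$ vanishes, which is what blocks a central limit theorem for $PS_u^{\lambda}(\sigma)$ itself.
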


We will prove the theorem after three lemmas. 
\begin{lemma}\label{bound on nonzero terms}
 Let as usual $\sigma\in S_r$ and $\lambda\vdash n$, with $n>r$. Then $\pi^{\lambda}(\sigma)_{i,j}=0$ for all $i,j$ such that $|i-j|>r!$
\end{lemma}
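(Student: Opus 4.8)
The plan is to combine the block structure coming from the bijection $\phi_r$ with the observation that the last letter order is essentially a lexicographic order on the positions of the large entries. Recall that for $T,T'\in\SYT(\lambda)$ with $\phi_r(T)=(U,V)$ and $\phi_r(T')=(U',V')$, the lemma established above shows $\pi^{\lambda}(\sigma)_{T,T'}=0$ unless $V=V'$. Since $V$ records the boxes (and values) of the entries $r+1,\ldots,n$, the condition $V=V'$ says precisely that $T$ and $T'$ agree on the positions of all letters exceeding $r$. So it suffices to prove that the set of tableaux sharing a fixed configuration $V$ of large letters forms a contiguous interval of length at most $r!$ in the last letter order; then any two tableaux $T_i,T_j$ with $\pi^{\lambda}(\sigma)_{i,j}\neq 0$ lie in a common such interval, forcing $|i-j|<r!$.

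First I would observe that in a standard Young tableau the row index of the largest entry determines its box: $n$ must occupy an outer corner of $\lambda$, and distinct outer corners of a Young diagram lie in distinct rows. Iterating downward---having fixed the boxes of $n,n-1,\ldots,k+1$, the boxes carrying entries $\le k$ form a Young subdiagram, whose outer corners again lie in distinct rows, so the row of $k$ pins down its box---shows that the rows of $n,n-1,\ldots,r+1$ jointly determine the boxes of all these letters. Consequently the last letter order, which compares the row of $n$, then the row of $n-1$, and so on, is a lexicographic order whose first $n-r$ coordinates record exactly the data captured by $V$. Thus $V=V'$ is equivalent to $T$ and $T'$ sharing the same length-$(n-r)$ prefix in this lexicographic order, and the tableaux with a given $V$ therefore form a contiguous interval.

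It remains to bound the length of such an interval. A tableau with large-letter configuration $V$ of skew shape $\lambda/\nu$ (so $\nu=\sh(U)\vdash r$) is determined by the filling of the remaining boxes, which constitute the shape $\nu$, by the numbers $1,\ldots,r$; that is, by an element of $\SYT(\nu)$. Hence the interval has length $\dim\nu=|\SYT(\nu)|\le r!$, the last inequality holding because there are only $r!$ bijective fillings of $r$ boxes. Therefore two indices $i,j$ with $\pi^{\lambda}(\sigma)_{i,j}\neq 0$ lie in a single interval of size at most $r!$, so $|i-j|\le r!-1<r!$; equivalently $\pi^{\lambda}(\sigma)_{i,j}=0$ whenever $|i-j|>r!$, as claimed.

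The step I expect to require the most care is the structural claim that the last letter order is genuinely lexicographic in the boxes of the large letters---that comparing rows agrees with comparing boxes at each stage. This hinges on the ``distinct outer corners lie in distinct rows'' observation together with the inductive fact that the boxes of $\{1,\ldots,k\}$ always form a Young subdiagram; once these are in place, the contiguity of each $V$-block and the crude $r!$ bound on its size follow immediately.
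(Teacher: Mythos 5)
Your proof is correct and follows essentially the same route as the paper's: the paper iterates the branching decomposition \eqref{decomposition of the representation matrix} to write $\pi^{\lambda}(\sigma)$ as a direct sum of blocks $\pi^{\mu^{(r)}}(\sigma)$ with $\mu^{(r)}\vdash r$, so the matrix is block diagonal in the last letter order with blocks of size $\dim\mu^{(r)}\leq r!$, and your $V$-classes are exactly these blocks, since a chain $\mu^{(r)}\nearrow\cdots\nearrow\lambda$ is the same datum as the skew tableau $V$. The only real difference is that you explicitly verify (via the row-determines-box and lexicographic-prefix argument) that each block is a contiguous interval of indices, a point the paper absorbs into the unproved assertion that the choice of order implies \eqref{decomposition of the representation matrix}; your write-up is therefore a slightly more self-contained version of the same argument.
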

\begin{proof}
We iteratively use the Formula \eqref{decomposition of the representation matrix}:
\[\pi^{\lambda}(\sigma)=\bigoplus_{\mu^{(n-1)}\nearrow \lambda}\pi^{\mu^{(n-1)}}(\sigma)=\bigoplus_{\substack{\mu^{(n-1)}\nearrow \lambda\\ \mu^{(n-2)}\nearrow \mu^{(n-1)}}}\pi^{\mu^{(n-2)}}(\sigma)=
\bigoplus_{\mu^{(r)}\nearrow \cdots\nearrow\lambda}\pi^{\mu^{(r)}}(\sigma).\]
Therefore $\pi^{\lambda}(\sigma)$ is a block matrix such that the only nonzero blocks are those on the diagonal. To conclude, notice that $\dim{\mu^{(r)}}\leq r!$
\end{proof}
\begin{lemma}\label{bound on entries}
 With the usual setting of $\lambda$, $\sigma$ and $u$ we have
 \[|PS_u^{\lambda}(\sigma)|\leq 2 u\cdot r!\cdot 2^{l(\sigma)},\]
 where $l(\sigma)$ is the length of the reduced word of $\sigma$, \emph{i.e.} the minimal number of adjacent transpositions occurring in the decomposition of $\sigma$.
\end{lemma}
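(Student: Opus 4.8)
The plan is to bound the number of contributing summands and the magnitude of each matrix entry separately, then multiply the two estimates together.

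First I would invoke Lemma \ref{bound on nonzero terms}: iterating the block decomposition \eqref{decomposition of the representation matrix} down to $S_r$ exhibits $\pi^{\lambda}(\sigma)$ as a block-diagonal matrix whose diagonal blocks are the $\pi^{\mu^{(r)}}(\sigma)$, each of size $\dim\mu^{(r)}\le r!$. Consequently, for each fixed row index $i$ there are at most $r!$ column indices $j$ for which $\pi^{\lambda}(\sigma)_{i,j}\neq 0$ (the entries outside the block containing $i$ all vanish).

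Second, to control $|\pi^{\lambda}(\sigma)_{i,j}|$ I would fix a reduced word $\sigma=s_{i_1}\cdots s_{i_l}$ with $l=l(\sigma)$ and $s_k=(k,k+1)$, so that $\pi^{\lambda}(\sigma)=\pi^{\lambda}(s_{i_1})\cdots\pi^{\lambda}(s_{i_l})$. By Definition \ref{def young seminormal} each factor $\pi^{\lambda}(s_k)$ has at most two nonzero entries in every row, each of modulus at most $1$: the diagonal value is $1/d_k(T)$ with $|d_k(T)|\ge 1$, and the off-diagonal value is $\sqrt{1-1/d_k(T)^2}\le 1$. Expanding the product as a sum over paths, $\pi^{\lambda}(\sigma)_{i,j}=\sum_{k_1,\dots,k_{l-1}}\pi^{\lambda}(s_{i_1})_{i,k_1}\cdots\pi^{\lambda}(s_{i_l})_{k_{l-1},j}$, and since at each of the $l$ steps at most two choices keep the factor nonzero, there are at most $2^{l}$ nonzero paths from $i$ to $j$, each a product of $l$ numbers of modulus at most $1$. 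Hence $|\pi^{\lambda}(\sigma)_{i,j}|\le 2^{l(\sigma)}$.

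Combining the two bounds finishes the proof: at most $u\dim\lambda$ values of $i$ contribute, each row has at most $r!$ nonzero entries, and each entry is bounded by $2^{l(\sigma)}$, so by the triangle inequality
\[
|PS_u^{\lambda}(\sigma)|\le\frac{1}{\dim\lambda}\sum_{i,j\le u\dim\lambda}|\pi^{\lambda}(\sigma)_{i,j}|\le\frac{(u\dim\lambda)\,r!\,2^{l(\sigma)}}{\dim\lambda}=u\,r!\,2^{l(\sigma)}\le 2u\,r!\,2^{l(\sigma)}.
\]
The extra factor $2$ in the stated bound is harmless slack; it also absorbs the coarser count $2r!+1$ of nonzero entries per row if one prefers to invoke the diagonal band $|i-j|\le r!$ of Lemma \ref{bound on nonzero terms} rather than the exact block sizes. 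I do not expect any genuine obstacle here: the only point requiring care is checking that every factor $\pi^{\lambda}(s_k)$ really is sparse with entries of modulus at most $1$, which is precisely what validates the naive path count $2^{l(\sigma)}$.
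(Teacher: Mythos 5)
Your proof is correct and follows essentially the same route as the paper: the paper also combines the sparsity count from Lemma \ref{bound on nonzero terms} with the entry bound $|\pi^{\lambda}(\sigma)_{T,S}|\leq 2^{l(\sigma)}$, which it establishes by induction on $l(\sigma)$ --- your path-counting expansion is just the unrolled form of that induction. Your block-diagonal count of $r!$ nonzero entries per row is in fact slightly sharper than the paper's $2r!$ coming from the band $|i-j|\leq r!$, which is where its extra factor of $2$ originates.
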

\begin{proof}
 We first prove a bound on the absolute value of an entry of the matrix
 \begin{equation}\label{bound of an entry}
  |\pi^{\lambda}(\sigma)_{T,S}|\leq 2^{l(\sigma)}\mbox{ for each }T,S
 \end{equation}
 by induction on $l(\sigma)$. The initial case is $\sigma=(k,k+1)$, for which $|\pi^{\lambda}((k,k+1))_{T,S}|\leq 1$.\\
 Suppose $\sigma=(k,k+1)\tilde{\sigma}$, then
 \[|\pi^{\lambda}(\sigma)_{T,S}|=\left|\frac{1}{d_k(T)}\pi^{\lambda}(\tilde{\sigma})_{T,S}+\sqrt{1-\frac{1}{d_k(T)^2}}\pi^{\lambda}(\tilde{\sigma})_{(k,k+1)T,S}\delta_{(k,k+1)T\in SYT(\lambda)}\right|\leq 2^{l(\tilde{\sigma})}+2^{l(\tilde{\sigma})}=2^{l(\sigma)}.\]
  We see that
\[ |PS_u^{\lambda}(\sigma)|=\left|\sum_{i,j\leq u \dim\lambda}\frac{\pi^{\lambda}(\sigma)_{i,j}}{\dim\lambda}\right|
 \leq 2u\dim\lambda\cdot r!\max_{T,S\in \SYT(\lambda)}\left|\frac{\pi^{\lambda}(\sigma)_{T,S}}{\dim\lambda}\right|\le2^{l(\sigma)+1} u\cdot r!, \]
which allows us to conclude. Notice that we used the previous lemma in the first inequality, which shows that the number of nonzero terms appearing in the sum is bounded by $2 u\dim\lambda\cdot r!$
\end{proof}
\begin{lemma}
 Let $\sigma\neq \id$, then $PT_u^{\lambda}(\sigma)\overset{p}\to 0.$
\end{lemma}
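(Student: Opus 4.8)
The plan is to split $PT_u^{\lambda}(\sigma)$ along the decomposition of Proposition \ref{first order decomposition of the partial trace}, namely $PT_u^{\lambda}(\sigma)=MT_u^{\lambda}(\sigma)+R^{\lambda}_u(\sigma)$ with $R^{\lambda}_u(\sigma)=\frac{\dim\mu_{\bar{j}}}{\dim\lambda}PT_{\bar{u}}^{\mu_{\bar{j}}}(\sigma)$, and to show that both pieces vanish in probability. For the main term I would simply invoke Theorem \ref{convergence of transformed co-transition}: writing $\rho$ for the cycle type of $\sigma$, the hypothesis $\sigma\neq\id$ forces $\wt(\rho)=|\supp(\sigma)|\geq 2$, since a nonidentity permutation moves at least two points. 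Hence $n^{\wt(\rho)/2}MT_u^{\lambda}(\sigma)$ converges in distribution to $u\cdot\prod_{k\geq 2}k^{m_k(\rho)/2}\mathcal{H}_{m_k(\rho)}(\xi_k)$, so $MT_u^{\lambda}(\sigma)$ is stochastically bounded by $n^{-\wt(\rho)/2}$, and since $\wt(\rho)>0$ this gives $MT_u^{\lambda}(\sigma)\overset{p}\to 0$.

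For the remainder the key observation is that $PT_{\bar{u}}^{\mu_{\bar{j}}}(\sigma)$ is bounded by a constant depending only on $\sigma$. Indeed, the entry bound \eqref{bound of an entry} established inside Lemma \ref{bound on entries}, $|\pi^{\mu_{\bar{j}}}(\sigma)_{T,S}|\leq 2^{l(\sigma)}$, gives $|PT_{\bar{u}}^{\mu_{\bar{j}}}(\sigma)|\leq \bar{u}\cdot 2^{l(\sigma)}\leq 2^{l(\sigma)}$ after summing the at most $\bar{u}\dim\mu_{\bar{j}}$ diagonal terms and dividing by $\dim\mu_{\bar{j}}$. Therefore $|R^{\lambda}_u(\sigma)|\leq \frac{\dim\mu_{\bar{j}}}{\dim\lambda}\,2^{l(\sigma)}\leq\bigl(\max_j \tfrac{\dim\mu_j}{\dim\lambda}\bigr)2^{l(\sigma)}$, and the last quantity tends to $0$ in probability by Lemma \ref{behaviour of co-transition}, since the convergence of the normalized co-transition distribution towards an atom-free limit forces $\max_j \dim\mu_j/\dim\lambda\overset{p}\to 0$. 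Adding the two estimates yields $PT_u^{\lambda}(\sigma)\overset{p}\to 0$.

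The argument is short precisely because we only ask for convergence to zero, not a rate: we control the remainder by the crude product of a uniformly bounded partial trace and a vanishing dimension ratio. One point worth stressing is the role of the hypothesis $\sigma\neq\id$; for $\sigma=\id$ the main term is $MT_u^{\lambda}(\id)=F_{ct}^{\lambda}(v^{\lambda})\to u$ by Lemma \ref{behaviour of co-transition}, so the statement genuinely fails without it. The main obstacle, which does not affect this unscaled statement but blocks the stronger claim \eqref{conjecture on remainder}, is that the above estimate for $R^{\lambda}_u(\sigma)$ gives no control on the speed of convergence: rescaling by $n^{\wt(\rho)/2}$ would require knowing how fast $\dim\mu_{\bar{j}}/\dim\lambda$ decays, which is exactly the open question discussed in Section \ref{section conjecture}.
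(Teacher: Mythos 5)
Your proof is correct and follows essentially the same route as the paper: the main term is killed by Theorem \ref{convergence of transformed co-transition} using $\wt(\sigma)\geq 2$ for $\sigma\neq\id$, and the remainder is bounded by $2^{l(\sigma)}\dim\mu_{\bar{j}}/\dim\lambda$, which vanishes in probability because the co-transition measure converges to an atom-free limit. Your phrasing $MT_u^{\lambda}(\sigma)\in O_P(n^{-\wt(\rho)/2})$ is in fact slightly more careful than the paper's $o_P(n^{-\wt(\sigma)/2})$, but the argument is the same.
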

\begin{proof}
 Recall the decomposition of the partial trace into main term and remainder (Proposition \ref{first order decomposition of the partial trace}):
 \[PT_u^{\lambda}(\sigma)=MT_u^{\lambda}(\sigma)+RT_u^{\lambda}(\sigma).\]
 By Theorem \ref{convergence of transformed co-transition}, if $\sigma\neq\id$ then $MT_u^{\lambda}(\sigma)\in o_P(n^{-\frac{\wt(\sigma)}{2}})\subseteq o_P(1)$. On the other hand we can estimate the remainder through the bound on a singular entry calculated in \eqref{bound of an entry}:
 \[|RT_u^{\lambda}(\sigma)|\leq 2^{l(\sigma)}\cdot\frac{\dim\mu_{\bar{j}}}{\dim\lambda}\overset{p}\to 0,\]
 where $l(\sigma)$ is the length of the reduced word of $\sigma$, $\mu_{\bar{j}}$ is the subpartition corresponding to the renormalized content $\frac{y_{\bar{j}}}{\sqrt{n}}=(F_{ct}^{\lambda})^*(u)$, and 
 \[\bar{u}=\frac{\dim\lambda}{\dim\mu_{\bar{j}}}\left(u-\sum_{j<\bar{j}}\dim\mu_j\right).\qedhere \]
\end{proof}

\begin{proof}[Proof of Proposition \ref{convergence of partial sum}]
We claim that the partial sum $PS_u^{\lambda}(\sigma)$ and the following quantity are asymptotically close 
\[\sum_{\tau\in S_r}\mathbb{E}_{PL}^{r}\left[\hat{\chi}^{\nu}(\tau) TS^{\nu}(\sigma)\right]PT_u^{\lambda}(\tau),\]
that is, 
\[\left|PS_u^{\lambda}(\sigma)-\sum_{\tau\in S_r}\mathbb{E}_{PL}^{r}\left[\hat{\chi}^{\nu}(\tau) TS^{\nu}(\sigma)\right]PT_u^{\lambda}(\tau)\right|\overset{p}\to 0.\]
We substitute in the previous expression the decomposition formulas for the partial sum and partial trace, respectively Propositions \ref{first order decomposition of partial sum} and \ref{first order decomposition of the partial trace}, and we simplify according to the equality of Equation (\ref{formula between G and F}), so that it remains:
\[\frac{\dim\mu_{\bar{j}}}{\dim\lambda}\left|PS_{\bar{u}}^{\mu_{\bar{j}}}(\sigma)-\sum_{\tau\in S_r}\mathbb{E}_{PL}^{r}\left[\hat{\chi}^{\nu}(\tau) TS^{\nu}(\sigma)\right]PT_{\bar{u}}^{\mu_{\bar{j}}}(\tau)\right|.\]
We recall from \eqref{bound of an entry} that $|\pi^{\lambda}(\tau)_{T,S}|\leq 2^{l(\tau)}$ for each $T,S$, which implies that 
\[\left|PT_{\bar{u}}^{\mu_{\bar{j}}}(\tau)\right|\leq 2^{l(\tau)}\cdot \bar{u}\]
hence
\begin{align*}
 &\frac{\dim\mu_{\bar{j}}}{\dim\lambda}\left|PS_{\bar{u}}^{\mu_{\bar{j}}}(\sigma)-\sum_{\tau\in S_r}\mathbb{E}_{PL}^{r}\left[\hat{\chi}^{\nu}(\tau) TS^{\nu}(\sigma)\right]PT_{\bar{u}}^{\mu_{\bar{j}}}(\tau)\right|\\
&\leq \frac{\dim\mu_{\bar{j}}}{\dim\lambda} \left(2 \bar{u}\cdot r!\cdot 2^{l(\sigma)}+\sum_{\tau\in S_r}\mathbb{E}_{PL}^{r}\left[\hat{\chi}^{\nu}(\tau) TS^{\nu}(\sigma)\right]2^{l(\tau)}\cdot \bar{u}\right)\overset{p}\to 0
\end{align*}
since the expression inside the parenthesis is bounded and $\dim\mu_{\bar{j}}/\dim\lambda\overset{p}\to 0$.\\
On the other hand 
\[\sum_{\tau\in S_r}\mathbb{E}_{PL}^{r}\left[\hat{\chi}^{\nu}(\tau) TS^{\nu}(\sigma)\right]PT_u^{\lambda}(\tau)=\frac{\lfloor u\dim\lambda\rfloor}{\dim\lambda}\mathbb{E}_{PL}^{r}\left[TS^{\nu}(\tau)\right]+\smallo(1)\overset{d}\to u\mathbb{E}_{PL}^{r}\left[TS^{\nu}(\tau)\right].\qedhere\]
\end{proof}

For the same reasons for which we cannot prove convergence of the partial trace, here we cannot show a second order asymptotic, indeed we know that the term $\frac{\dim\mu_{\bar{j}}}{\dim\lambda}PS_{\bar{u}}^{\mu_{\bar{j}}}(\sigma)$ disappear when $\lambda$ grows, but we do not know how fast. This will be discussed more deeply in the next section.

We can now generalize the concept of partial sum, and Lemma \ref{bound on entries} allows us to describe its asymptotics.
\begin{defi}
 Let $\lambda\vdash n$, $\sigma\in S_n, u_1,u_2\in \R$. Define the {\it partial sum of the entries of the irreducible representation matrix associated to $\pi^{\lambda}(\sigma)$ stopped at $(u_1,u_2)$} as
\[PS_{u_1,u_2}^{\lambda}(\sigma):=\sum_{\substack{i\leq u_1 \dim\lambda\\j\leq u_2 \dim\lambda}}\frac{\pi^{\lambda}(\sigma)_{i,j}}{\dim\lambda}.\]
\end{defi}
\begin{corollary}
Let $u_1,u_2\in [0,1]$, then
\[ PS_{u_1,u_2}^{\lambda}(\sigma)\overset{p}\to \min\{u_1,u_2\}\mathbb{E}_{PL}^{r}\left[TS^{\nu}(\sigma)\right]\]
\end{corollary}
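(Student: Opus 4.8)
The plan is to reduce the rectangular partial sum $PS_{u_1,u_2}^{\lambda}(\sigma)$ to the square partial sum $PS_{\min\{u_1,u_2\}}^{\lambda}(\sigma)$, for which Theorem \ref{convergence of partial sum} already supplies the limit. Assume without loss of generality that $u_1\le u_2$; the opposite case is handled identically, comparing instead to the square indexed by $u_2$. The strategy is to write
\[ PS_{u_1,u_2}^{\lambda}(\sigma) - PS_{u_1}^{\lambda}(\sigma) = \sum_{\substack{i \le u_1\dim\lambda \\ u_1\dim\lambda < j \le u_2\dim\lambda}} \frac{\pi^{\lambda}(\sigma)_{i,j}}{\dim\lambda}, \]
and then argue that this difference vanishes in probability, so that the rectangular sum is asymptotically equal to the square sum $PS_{u_1}^{\lambda}(\sigma)=PS_{\min\{u_1,u_2\}}^{\lambda}(\sigma)$.

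The key ingredient will be the band structure of the seminormal representation matrix proved in Lemma \ref{bound on nonzero terms}: the entry $\pi^{\lambda}(\sigma)_{i,j}$ vanishes whenever $|i-j|>r!$. Hence in the displayed sum a term can be nonzero only if $j-i\le r!$, which together with $i\le u_1\dim\lambda<j$ confines the nonzero terms to the small corner $u_1\dim\lambda-r!<i\le u_1\dim\lambda$ and $u_1\dim\lambda<j\le i+r!$. The number of such index pairs is therefore bounded by $(r!)^2$, a constant independent of $n$. Bounding each entry by $|\pi^{\lambda}(\sigma)_{i,j}|\le 2^{l(\sigma)}$ as in \eqref{bound of an entry}, I obtain
\[ \left| PS_{u_1,u_2}^{\lambda}(\sigma) - PS_{u_1}^{\lambda}(\sigma) \right| \le \frac{(r!)^2\,2^{l(\sigma)}}{\dim\lambda}. \]

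It then remains to note that $\dim\lambda\overset{p}\to\infty$ under the Plancherel measure: for any fixed $M$ one has $P_{PL}(\dim\lambda\le M)\le M^2 p(n)/n!\to 0$, where $p(n)$ is the number of partitions of $n$. Consequently the right-hand side above tends to $0$ in probability, giving $PS_{u_1,u_2}^{\lambda}(\sigma)=PS_{\min\{u_1,u_2\}}^{\lambda}(\sigma)+o_P(1)$. Applying Theorem \ref{convergence of partial sum} to the square term, namely $PS_{\min\{u_1,u_2\}}^{\lambda}(\sigma)\overset{p}\to \min\{u_1,u_2\}\,\mathbb{E}_{PL}^{r}[TS^{\nu}(\sigma)]$, and combining by Slutsky's theorem yields the claim. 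I do not expect any serious obstacle here: the entire content is that the band structure makes the off-square corner of the rectangle asymptotically negligible, so the rectangular sum is governed by the smaller of the two truncations. The only points needing care are the bookkeeping of which square the rectangle collapses onto — hence the appearance of $\min\{u_1,u_2\}$ rather than a product — and the elementary growth of $\dim\lambda$, both of which are routine.
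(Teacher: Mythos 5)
Your proof is correct and follows essentially the same route as the paper: both reduce the rectangle to the square $PS_{\min\{u_1,u_2\}}^{\lambda}(\sigma)$, bound the number of nonzero terms in the off-square strip by $(r!)^2$ via Lemma \ref{bound on nonzero terms}, bound each entry by $2^{l(\sigma)}$ via \eqref{bound of an entry}, and invoke Theorem \ref{convergence of partial sum}. Your explicit verification that $\dim\lambda\overset{p}\to\infty$ is a welcome detail the paper leaves implicit.
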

\begin{proof}
 Suppose $u_1<u_2$, we only need to prove that 
 \[|PS_{u_1,u_2}^{\lambda}(\sigma)-PS_{u_1}^{\lambda}(\sigma)|=\left|\sum_{\substack{i\leq u_1 \dim\lambda\\u_1 \dim\lambda<j\leq u_2 \dim\lambda}}\frac{\pi^{\lambda}(\sigma)_{i,j}}{\dim\lambda}\right|\overset{p}\to 0.\]
 The number of nonzero terms in the above sum is bounded by $(r!)^2$ by Lemma \ref{bound on nonzero terms}; moreover by \hbox{Equation \eqref{bound of an entry}} we have \hbox{$|\pi^{\lambda}(\sigma)_{i,j}|\leq 2^{l(\sigma)}$}. Therefore
  \[|PS_{u_1,u_2}^{\lambda}(\sigma)-PS_{u_1}^{\lambda}(\sigma)|\overset{p}\to 0,\]
 and the corollary follows.
\end{proof}

\section{A conjecture}\label{section conjecture}

\begin{conjecture}\label{conjecture 1}
 Set as usual $\lambda\vdash n$. We say that $\mu\subseteq \lambda$ if $\mu$ is a partition obtained from $\lambda$ by removing boxes.
 \smallskip
 
 We conjecture that there exists $\alpha>0$ such that, for all $s$,
\[P_{PL}^n\left(\left\{\lambda: \max_{\substack{\mu\colon \mu\subseteq \lambda\\|\mu|=|\lambda|-s}}\frac{\dim\mu}{\dim\lambda}>n^{-\alpha s}\right\}\right)\to 0.\]
 \end{conjecture}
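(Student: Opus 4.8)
The plan is to sidestep the geometry of the diagram entirely and instead bound the \emph{whole family} of ratios in mean square, which turns out to be enough. Fix $s$ and, for $\lambda\vdash n$, set
\[Z_s(\lambda):=\sum_{\substack{\mu\subseteq\lambda\\|\mu|=n-s}}\left(\frac{\dim\mu}{\dim\lambda}\right)^2.\]
Since all the ratios are nonnegative, $\max_{\mu}\dim\mu/\dim\lambda\le Z_s(\lambda)^{1/2}$, so it suffices to show that $Z_s(\lambda)$ is small with high probability. The decisive point is that its Plancherel average can be evaluated exactly, up to the combinatorics of adding boxes: exchanging the sum over $\lambda$ with the sum over $\mu$ and cancelling $(\dim\lambda)^2$,
\[\mathbb{E}_{PL}^n[Z_s]=\sum_{\lambda\vdash n}\frac{(\dim\lambda)^2}{n!}\sum_{\substack{\mu\subseteq\lambda\\|\mu|=n-s}}\frac{(\dim\mu)^2}{(\dim\lambda)^2}=\frac{1}{n!}\sum_{\mu\vdash n-s}(\dim\mu)^2\,N(\mu),\]
where $N(\mu)=|\{\lambda\vdash n:\mu\subseteq\lambda\}|$ is the number of ways to add $s$ boxes to $\mu$.

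To control $N(\mu)$ I would use the elementary observation that a partition of $m$ has at most $\sqrt{2m}+1$ addable boxes: the number of inner (addable) corners is one more than the number of distinct part sizes, and $D$ distinct positive parts already force $m\ge 1+2+\cdots+D=\binom{D+1}{2}$. Counting saturated chains $\mu=\kappa_0\nearrow\cdots\nearrow\kappa_s=\lambda$ (each resulting $\lambda$ being reached at least once) then gives $N(\mu)\le(\sqrt{2n}+1)^s\le(2\sqrt n)^s$ for $n$ large. Plugging this in together with $\sum_{\mu\vdash n-s}(\dim\mu)^2=(n-s)!$ yields
\[\mathbb{E}_{PL}^n[Z_s]\le\frac{(2\sqrt n)^s(n-s)!}{n!}=\frac{(2\sqrt n)^s}{n^{\downarrow s}}\le 4^s\,n^{-s/2}.\]
Markov's inequality then closes the argument: for any fixed $\epsilon>0$,
\[P_{PL}^n\!\left(Z_s>n^{-s/2+s\epsilon}\right)\le 4^s\,n^{-s\epsilon}\xrightarrow[n\to\infty]{}0,\]
so with probability tending to $1$ one has $\max_{\mu}\dim\mu/\dim\lambda\le Z_s^{1/2}\le 2^{s/2}n^{-s/4+s\epsilon/2}\le n^{-\alpha s}$ for every fixed $\alpha<1/4$ once $n$ is large. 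Since the same $\alpha$ serves all $s$, this would establish Conjecture~\ref{conjecture 1} (already $\alpha=1/8$ suffices).

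I expect no serious obstacle here; the two places deserving care are the interchange of summation (routine, as everything is finite and nonnegative) and the counting bound $N(\mu)\le(2\sqrt n)^s$, which is the only nontrivial input and the step I would verify most carefully, since a hidden loss could only enter there. It is worth stressing that the exponent obtained is surely not optimal: removing $s$ boxes along a chain multiplies $s$ single-box co-transition ratios, each of order $n^{-1/2}$ by the semicircular behaviour of the co-transition measure (Lemma~\ref{convergence of co-transition}), so the true maximum should be of order $n^{-s/2}$, and it is the passage through the $\ell^2$-norm that costs the factor two in the exponent. Because the conjecture only asks for \emph{some} $\alpha>0$ this is immaterial, and in particular the bound already settles the remainder conjecture \eqref{conjecture on remainder}: choosing $s$ large enough that $\alpha s>\wt(\rho)/2$ and using that the entries of $\pi^{\mu}(\sigma)$ are uniformly bounded makes $n^{\wt(\rho)/2}$ times the remainder tend to $0$.
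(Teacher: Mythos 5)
The paper does not prove this statement at all: it is left as a conjecture, supported only by numerical evidence for $s=1$ and accompanied by the remark that the $s=1$ case cannot simply be bootstrapped to general $s$ because the intermediate partitions $\mu^{(1)}\nearrow\cdots\nearrow\mu^{(s-1)}$ are not Plancherel distributed. Your second-moment argument therefore goes strictly beyond the paper, and after checking each step I believe it is correct. The interchange of summation giving $\mathbb{E}_{PL}^n[Z_s]=\frac{1}{n!}\sum_{\mu\vdash n-s}(\dim\mu)^2N(\mu)$ is legitimate (finite nonnegative sums); the identity $\sum_{\mu\vdash n-s}(\dim\mu)^2=(n-s)!$ is standard; the count of addable corners of a partition of $m$ as (number of distinct part sizes)$+1\leq\sqrt{2m}+1$ is right, since $D$ distinct parts force $m\geq\binom{D+1}{2}$; every $\lambda\supseteq\mu$ is reached by at least one saturated chain because every skew shape admits a standard filling, so $N(\mu)\leq(2\sqrt{n})^s$ for $n$ large; and the Markov step together with $\max_\mu a_\mu\leq(\sum_\mu a_\mu^2)^{1/2}$ yields the claim for any fixed $\alpha<1/4$, uniformly in $s$. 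The decisive structural advantage of your route is precisely that it evaluates the $\ell^2$ average of the ratios directly at level $n-s$ under the Plancherel measure at level $n$, so the distributional issue the author flags never arises. Your closing observation is also consistent with the paper: combined with the bound $|\pi^{\mu}(\sigma)_{T,S}|\leq 2^{l(\sigma)}$ from Equation \eqref{bound of an entry} and Proposition \ref{decomposition of the partial trace}, choosing $s$ with $\alpha s>\wt(\rho)/2$ gives exactly the hypothesis of the conditional proposition in Section \ref{section conjecture}, hence would settle \eqref{conjecture on remainder}. The only loss in your argument is the factor of two in the exponent from passing through the $\ell^2$ norm, which, as you say, is immaterial since the conjecture only asks for some $\alpha>0$. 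I would only suggest writing out explicitly that $n^{\downarrow s}\geq(n/2)^s$ for $n\geq 2s$ when you assert $\mathbb{E}_{PL}^n[Z_s]\leq 4^s n^{-s/2}$, and double-checking the chain count once more as you yourself propose; otherwise this reads as a complete proof of Conjecture \ref{conjecture 1}.
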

 We run some tests which hint the conjecture to be true for $s=1$; for example, if $\alpha=0.2$ and $n\leq 70$, we have
 \[P_{PL}^n\left(\left\{\lambda: \max_{\substack{\mu\colon \mu\subseteq \lambda\\|\mu|=|\lambda|-s}}\frac{\dim\mu}{\dim\lambda}>n^{-\alpha s}\right\}\right)\leq \left\{\begin{array}{cc}
                                                                                                                                                                         0.2& \mbox{ if } n\geq 7\\
                                                                                                                                                                         0.1&\mbox{ if } n\geq 12\\
                                                                                                                                                                          0.05&\mbox{ if } n\geq 37.\\
                                                                                                                                                                        \end{array}\right.\]
 
Notice that, for $s>1$,
 \[\max_{\substack{\mu\colon \mu\subseteq \lambda\\|\mu|=|\lambda|-s}}\frac{\dim\mu}{\dim\lambda}\leq \max_{\mu^{(1)}\nearrow\mu^{(2)}\nearrow\ldots\nearrow \mu^{(s-1)}\nearrow\lambda}\prod_i\left(\max_{\substack{\mu\colon \mu\subseteq \mu^{(i)}\\|\mu|=|\mu^{(i)}|-1}}\frac{\dim\mu}{\dim\mu^{(i)}}\right),\]
so that it may seem that it is enough to prove the conjecture just for $s=1$. 

This is not true though, since the sequence $\mu^{(1)}\nearrow\ldots\nearrow\mu^{(s-1)}$ in the RHS above is not Plancherel distributed. Hence we need the Conjecture in the more general form.
 \begin{proposition}
  If Conjecture \ref{conjecture 1} is correct, then  
  \[\left\{PT_{u_i}^{\lambda}(\sigma_i)\right\}\overset{d}\to\left\{ u_i\prod_{k\geq 2} k^{m_k(\rho_i)/2} \mathcal{H}_{m_k(\rho_i)}(\xi_k)\right\}.\]
 \end{proposition}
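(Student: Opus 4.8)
The plan is to run the iterated decomposition of Proposition \ref{decomposition of the partial trace}, isolate the one term whose limit we already understand, and show that every other term dies after multiplication by $n^{\wt(\rho)/2}$. Fix $\sigma$ of cycle type $\rho$ and choose $s$ large enough that $\alpha s>\wt(\rho)/2$, where $\alpha>0$ is the constant furnished by Conjecture \ref{conjecture 1}; for $n$ large we also have $n-s>r$, so Proposition \ref{decomposition of the partial trace} applies and gives
\begin{equation*}
 n^{\frac{\wt(\rho)}{2}}PT_u^{\lambda}(\sigma)=n^{\frac{\wt(\rho)}{2}}MT_{u}^{\lambda}(\sigma)+\sum_{i=1}^{s-1}n^{\frac{\wt(\rho)}{2}}\frac{\dim\mu^{(i)}}{\dim\lambda}MT_{u^{(i)}}^{\mu^{(i)}}(\sigma)+n^{\frac{\wt(\rho)}{2}}\frac{\dim\mu^{(0)}}{\dim\lambda}PT_{u^{(0)}}^{\mu^{(0)}}(\sigma).
\end{equation*}
The first summand converges in distribution to $u\prod_{k\geq2}k^{m_k(\rho)/2}\mathcal{H}_{m_k(\rho)}(\xi_k)$ by Theorem \ref{convergence of transformed co-transition}, so by Slutsky's theorem it suffices to prove that the other two groups of terms converge to $0$ in probability. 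The multivariate statement then follows from the joint convergence already contained in Theorem \ref{convergence of transformed co-transition}, since only finitely many vanishing error terms are involved.

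The deepest remainder is the easy one. The matrix entry bound \eqref{bound of an entry} gives $|PT_{u^{(0)}}^{\mu^{(0)}}(\sigma)|\leq 2^{l(\sigma)}$ deterministically, while $\mu^{(0)}$ is obtained from $\lambda$ by deleting $s$ boxes, so Conjecture \ref{conjecture 1} yields $\dim\mu^{(0)}/\dim\lambda\leq n^{-\alpha s}$ with probability tending to $1$. Hence this term is bounded in absolute value by $2^{l(\sigma)}n^{\wt(\rho)/2-\alpha s}$, which tends to $0$ by the choice of $s$.

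The intermediate main terms are where the real work lies. Each $MT_{u^{(i)}}^{\mu^{(i)}}(\sigma)$ is an average $\sum_{\nu\nearrow\mu^{(i)}}\frac{\dim\nu}{\dim\mu^{(i)}}\hat{\chi}^{\nu}(\sigma)$ taken over a subset of subpartitions with weights summing to at most $1$, so $|MT_{u^{(i)}}^{\mu^{(i)}}(\sigma)|\leq\max_{\nu\nearrow\mu^{(i)}}|\hat{\chi}^{\nu}(\sigma)|$ and therefore
\begin{equation*}
 \left|\frac{\dim\mu^{(i)}}{\dim\lambda}MT_{u^{(i)}}^{\mu^{(i)}}(\sigma)\right|\leq\frac{\dim\mu^{(i)}}{\dim\lambda}\max_{\nu}|\hat{\chi}^{\nu}(\sigma)|,
\end{equation*}
the maximum running over all $\nu\subseteq\lambda$ with $|\lambda|-|\nu|=s-i+1\leq s$. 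The first factor is at most $n^{-\alpha(s-i)}$ by Conjecture \ref{conjecture 1}, so if I can establish the uniform character bound $\max_{\nu}|\hat{\chi}^{\nu}(\sigma)|\in O_P(n^{-\wt(\rho)/2})$, then after multiplication by $n^{\wt(\rho)/2}$ the $i$-th term is $O_P(n^{-\alpha(s-i)})$ and vanishes for every $i<s$.

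The main obstacle is precisely this uniform character estimate: I need $|\hat{\chi}^{\nu}(\sigma)|\in O_P(n^{-\wt(\rho)/2})$ not only for $\nu\nearrow\lambda$ (which is Proposition \ref{final corollary} together with Kerov's Theorem \ref{convergence of characters}) but uniformly over all $\nu$ obtained by removing a bounded number $k\leq s$ of boxes. Since the intermediate partitions are not Plancherel distributed, I cannot telescope through Proposition \ref{final corollary} one box at a time. Instead I would re-run the argument of Section \ref{Asymptotic of the main term} directly on the pair $(\lambda,\nu)$: the multisets of contents satisfy $\mathcal{C}_{\lambda}=\mathcal{C}_{\nu}\sqcup\{c_1,\dots,c_k\}$, every removed content obeys $|c_l|<3\sqrt{n}$ with high probability, and each power sum decomposes as $p_j(\mathcal{C}_{\nu})=p_j(\mathcal{C}_{\lambda})-\sum_l c_l^{j}$, which is controlled entirely in terms of the Plancherel partition $\lambda$ via Lemma \ref{newlemma}. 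Feeding this into the modified power sum expansion of Lemma \ref{power sums on partial jucys murphy} and repeating the induction of Proposition \ref{final corollary}, whose inductive step only ever invokes characters of $\lambda$ and differences of characters and never a standalone character of $\nu$, should give $\hat{\chi}^{\lambda}(\sigma)-\hat{\chi}^{\nu}(\sigma)\in o_P(n^{-\wt(\rho)/2})$ uniformly in $\nu$, and hence the required bound. With this lemma in hand the three groups of terms are controlled as above and the proposition follows.
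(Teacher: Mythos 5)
Your proof is correct and follows the same skeleton as the paper's: the iterated decomposition of Proposition \ref{decomposition of the partial trace}, convergence of the top term via Theorem \ref{convergence of transformed co-transition}, and the deepest remainder killed by combining Conjecture \ref{conjecture 1} (with $\alpha s>\wt(\rho)/2$) with the entry bound \eqref{bound of an entry} --- that part is identical. Where you diverge is the treatment of the intermediate terms $\frac{\dim\mu^{(i)}}{\dim\lambda}MT_{u^{(i)}}^{\mu^{(i)}}(\sigma)$ for $1\leq i\leq s-1$. The paper disposes of them by asserting that $(n-s+i)^{\wt(\sigma)/2}MT_{u^{(i)}}^{\mu^{(i)}}(\sigma)$ converges ``because of Theorem \ref{convergence of transformed co-transition}'' and multiplying by $\frac{\dim\mu^{(i)}}{\dim\lambda}\in o_P(1)$, which needs only the atom-free limit of Lemma \ref{convergence of co-transition} and not the conjecture. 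You instead bound $|MT_{u^{(i)}}^{\mu^{(i)}}(\sigma)|$ by the maximal normalized character over partitions at bounded distance from $\lambda$ and invoke the conjecture a second time for the dimension ratio. The honest point you make --- and the reason your route is arguably preferable --- is that $\mu^{(i)}$ is not Plancherel distributed for $i<s$, so neither Theorem \ref{convergence of transformed co-transition} nor Proposition \ref{final corollary} applies to it verbatim; the paper itself raises exactly this non-Plancherel objection when discussing the dimension ratios after Conjecture \ref{conjecture 1}, yet then leans on the Plancherel-only character asymptotics for the intermediate main terms. The uniform estimate $\max_{\nu\subseteq\lambda,\;|\lambda|-|\nu|\leq s}|\hat{\chi}^{\lambda}(\sigma)-\hat{\chi}^{\nu}(\sigma)|\in o_P(n^{-\wt(\rho)/2})$ that you propose is what both arguments really need, and your sketch of it is sound: removing $k\leq s$ boxes changes each $p_j(\mathcal{C}_{\lambda})$ by $\sum_l c_l^{j}=O_P(n^{j/2})$ since the removed contents are $O(\sqrt{n})$ with high probability, the falling-factorial corrections change by the same order, and the induction in Proposition \ref{final corollary} only ever evaluates standalone characters at the Plancherel partition $\lambda$ (via Lemma \ref{newlemma} and Theorem \ref{convergence of characters}), so it survives the generalisation. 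The price of your route is this extra lemma; what it buys is a proof in which every probabilistic estimate is taken with respect to the Plancherel-distributed $\lambda$ alone. A minor remark: once you have the uniform character bound, you no longer need the conjecture for the intermediate terms, since $\frac{\dim\mu^{(i)}}{\dim\lambda}\leq\frac{\dim\mu^{(s-1)}}{\dim\lambda}\in o_P(1)$ already suffices; the conjecture is then genuinely used only for the deepest remainder, exactly as in the paper.
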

\begin{proof}
  Recall from Proposition \ref{decomposition of the partial trace} that for any $s<n-r$, there exists a sequence of partitions \hbox{$\mu^{(0)}\nearrow\mu^{(1)}\nearrow\ldots\nearrow \mu^{(s)}=\lambda$} and a sequence of real numbers $0\leq c_{0},\ldots,c_s<1$ such that
 \[PT_u^{\lambda}(\sigma)=\sum_{i=1}^{s}\frac{\dim\mu^{(i)}}{\dim\lambda}MT_{u^{(i)}}^{\mu^{(i)}}(\sigma)+\frac{\dim\mu^{(0)}}{\dim\lambda}PT_{u^{(0)}}^{\mu^{(0)}}(\sigma).\]
 We consider 
  \[n^{\frac{\wt(\sigma)}{2}}PT_u^{\lambda}(\sigma)=n^{\frac{\wt(\sigma)}{2}}\sum_{i=1}^{s}\frac{\dim\mu^{(i)}}{\dim\lambda}MT_{u^{(i)}}^{\mu^{(i)}}(\sigma)+n^{\frac{\wt(\sigma)}{2}}\frac{\dim\mu^{(0)}}{\dim\lambda}PT_{u^{(0)}}^{\mu^{(0)}}(\sigma).\]
In the first sum of the right hand side the term corresponding to $i=s$ is 
\[n^{\frac{\wt(\sigma)}{2}}MT_{u}^{\lambda}(\sigma)\overset{d}\to u \prod_{k\geq 2} k^{m_k(\rho)/2} \mathcal{H}_{m_k(\rho)}(\xi_k),\]
due to Theorem \ref{convergence of transformed co-transition}. On the other hand the other terms in the first sum of the RHS are of the form $n^{\frac{\wt(\sigma)}{2}}\frac{\dim\mu^{(i)}}{\dim\lambda}MT_{u^{(i)}}^{\mu^{(i)}}(\sigma)$ for $i=1,\ldots,s-1$, and it is easy to see that $(n-s+i)^{\frac{\wt(\sigma)}{2}}MT_{u^{(i)}}^{\mu^{(i)}}(\sigma)$ converge (because of Theorem \ref{convergence of transformed co-transition}), while $\frac{\dim\mu^{(i)}}{\dim\lambda}\in o_P(1)$ because of the convergence of the normalized co-transition distribution towards an
atom free distribution (see Lemma \ref{convergence of co-transition}).
\smallskip
We study thus the term $n^{\frac{\wt(\sigma)}{2}}\frac{\dim\mu^{(0)}}{\dim\lambda}PT_{u^{(0)}}^{\mu^{(0)}}(\sigma)$, and Conjecture \ref{conjecture 1} implies that
 \[\frac{\dim\mu^{(0)}}{\dim\lambda}\leq \max_{\substack{\mu\colon \mu\subseteq \lambda\\|\mu|=|\lambda|-s}}\frac{\dim\mu}{\dim\lambda}\leq n^{-\alpha s},\]
 with high probability. We choose $s$ such that $\wt(\sigma)/2<\alpha s$. An easy application of Inequality \eqref{bound of an entry} implies that $|PT_{u^{(0)}}^{\mu^{(0)}}(\sigma)|\leq u^{(0)}\cdot 2^{l(\sigma)}$, thus we have
 \[n^{\frac{\wt(\sigma)}{2}}\frac{\dim\mu^{(0)}}{\dim\lambda}PT_{u^{(0)}}^{\mu^{(0)}}(\sigma)\leq n^{\frac{\wt(\sigma)}{2}}n^{-\alpha s} u^{(0)}\cdot 2^{l(\sigma)}\to 0,\]
 which implies
 \[n^{\frac{\wt(\sigma)}{2}}\frac{\dim\mu^{(0)}}{\dim\lambda}PT_{u^{(0)}}^{\mu^{(0)}}(\sigma)\in o_P(1).\]
 Therefore 
  \[\left\{PT_{u_i}^{\lambda}(\sigma_i)\right\}\overset{d}\to\left\{ u_i\prod_{k\geq 2} k^{m_k(\rho_i)/2} \mathcal{H}_{m_k(\rho_i)}(\xi_k)\right\},\]
  which concludes the proof.
 \end{proof}

 \section{Acknowledgments}
The author would like to express his gratitude to Valentin F{\'e}ray for introducing him on the subject, many insightful discussions, and several corrections and suggestions in the development of the paper.

This research was founded by SNSF grant SNF-149461: ``Dual combinatorics of Jack polynomials''.

\bibliographystyle{alpha}
\bibliography{courant}{}
\label{sec:biblio}

\end{document}